\documentclass[12pt]{amsart}
\usepackage{amsfonts,amsmath,amscd}
\usepackage{epsfig,graphics}
\usepackage{amssymb}
\usepackage{amscd}
\usepackage{hyperref}
\usepackage{bbm}
\usepackage{mathabx}

\addtolength{\hoffset}{-0.5cm}
\addtolength{\textwidth}{1cm}

\newcommand{\BN}{{\mathbb{N}}}
\newcommand{\BR}{{\mathbb{R}}}
\newcommand{\BC}{{\mathbb{C}}}

\newcommand{\gd}{\delta}

\newcommand{\gs}{\sigma} 

\newcommand{\go}{\omega}

\newcommand{\gl}{\lambda}
\newcommand{\ga}{\alpha}

\newcommand{\gTh}{\Theta}


\newcommand{\cC}{{\mathcal{C}}}
\newcommand{\dd}{{\partial}}
\renewcommand{\hat}{\widehat}

\def\k{\kappa}

\def\vp{\varphi}
\def\e{\varepsilon}

\newcommand{\ol}[1]{\overline{#1}}

\newcommand{\Ker}{\text{Ker}}


\newcommand{\tprob}{\text{Prob}}

\newcommand{\half}{\frac{1}{2}}

\newtheorem{prop}{Proposition}[section]
\newtheorem*{prop*}{Proposition}

\newtheorem*{thm*}{Theorem}
\newtheorem{lem}[prop]{Lemma}
\newtheorem{cor}[prop]{Corollary}
\newtheorem*{cor*}{Corollary}

\theoremstyle{definition}

\newtheorem{defn}[prop]{Definition}
\newtheorem*{defn*}{Definition}
\newtheorem{rem}[prop]{Remark}
\newtheorem*{rem*}{Remark}
\newtheorem{exam}[prop]{Example}


\title{Analogs of complementary series for CAT(-1) groups.}

\begin{document}
\maketitle
\centerline{\scriptsize KEVIN BOUCHER}
\centerline{\tiny Weizmann institute of science, 234 Herzl street, Rehovot 7610001, Israel}
\centerline{\tiny (e-mail: kevin.boucher@weizmann.ac.il)}

\begin{abstract}
In this paper we extend the construction of complementary series representations to convex-cocompact isometry groups of CAT(-1) spaces with conditionally negative metrics.
Our approach is purely dynamical and generalizes the constructions, known for negatively curved algebraic groups as $SO(n,1)$, $SU(n,1)$ or $SL_2(\mathbb{Q}_p)$ and their lattices \cite{MR263985}, to new examples as non-linear groups coming from lattices of certain hyperbolic buildings \cite{MR1445387} \cite{MR1668359}.
\end{abstract}

{\tiny \textbf{Key words:} Boundary representations, complementary series, harmonic analysis, Patterson-Sullivan theory.\\
2020 Mathematics Subject Classification: 37A46 20F67 22D10}

\section{Introduction}
Concerning the representation theory and the harmonic analysis over discrete groups arising in a geometric context it is appropriate, at least at one stage, to try to see the problems in a general setting.

Let us therefore begin by recalling the general questions which have guided the development in the past and will certainly continue to serve in this role in the  future:

\begin{enumerate}
\item Given the \textit{natural} geometric action of $G$, is there a procedure that induces irreducible representations over $G$?
\item Is these representations are enough to obtain a type of Plancherel formula for $G$? 
\item What those representations reveal about the harmonic analysis over $G$?
\end{enumerate}

In the case of free groups those questions were investigated for the natural actions over trees by Figà-Talamanca and Piccardelo \cite{MR710827}.
Later Bader Muchnik \cite{MR2787597} pushed the investigation over the broader class of CAT(-1) groups and consider Koopman representations associated to natural dynamical systems over geometric boundaries at infinity called boundary representations.
This new perspectives have motivated many works \cite{Garncarek:2014aa} \cite{Boucher:2020aa} \cite{MR3656473} \cite{MR3549632}.\\

Considering CAT(-1) groups as generalized rank 1 lattices such representations correspond to parabolic inductions from trivial characters.
In this paper we pursue these investigations and straighten this analogy by establishing the existence of complementary series.\\
Given a group $G$, a family of unitary $G$-representations $(\pi_s)_s$ continuously parametrized by the interval $[0,1]$, with respect to the Fell topology on the unitary dual of $G$, such that $\pi_s^*=\pi_{1-s}$ for $s\in[0,1]$, $\pi_\half$ is a boundary representation and $\pi_1={\bf1}_G$ is the trivial representation is called \textit{complementary series} if it satisfies the extra conditions:
\begin{enumerate}
\item For all $s\in[0,1]$, $\pi_s$ is irreducible;
\item for all $t,s\in[0,1]$, $\pi_s$ is unitarily equivalent to $\pi_t$ if and only if $s=t$ or $s=1-t$;
\item if $\gl$ stands for the regular representation of $G$, $\pi_s$ is weakly contained in $\gl$ if and only if $s=\half$.
\end{enumerate}

In the context of rank 1 Lie groups these representations appear as parabolic induction from non-unitary characters over a Cartan subgroup.
As exposed in Section \ref{sec:banachcomp} our approach follows an geometric analogue of this idea.
It leads to a natural family of Banach representations called (s)-homogeneous representations, $(\mathcal{F}_s)_s$, over the skew-product associated to the boundary action (cf. Section \ref{sec:banachcomp}) that are unitarizable under certain assumptions on the geometry of $(X,d)$ and the parameter $s$.

Many negatively curved groups have the Kazhdan property (T) and therefore have their trivial representation isolated \cite{MR2415834}.
In particular the existence of complementary series over the all interval $[0,1]$ rules out those groups although it is conjectured that an uniformly bounded analogue of those complementary series exists on any negatively curved group.\\

To formulate our results we now set up some notations.\\
Let $(X,d)$ be a proper CAT(-1) space and $G$ a non-elementary discrete group of isometries acting properly that is convex-compact.

A continuous real kernel $k$ over $X$ is called conditionally negative \cite{MR2415834} if $k$ is symmetric, satisfies $k(x,x)=0$ for all $x\in X$ and given any finite set $F\subset X$ and any family of real numbers $(c_x)_{x\in F}$ with $\sum_Fc_x=0$ one has:
$$\sum_{x,y\in F}c_xc_yk(x,y)\le0$$ 
Examples of such kernels are given by combinatorial distances over the 1-skeleton of CAT(0) cube complexes, hyperbolic distances over real or complex hyperbolic spaces or more generally pseudo-distances associated to measured wall structures \cite{MR2106770}.
Let us also mention another example given by the distances of  \textit{even} Bourdon hyperbolic building $\text{I}_{4p',q}$ with $p'\ge2$ and $q\ge 3$ \cite{MR1445387} \cite{MR1668359}.\\

{\bf Theorem 1.}
\textit{Assume the distance $d$ over $X$ is conditionally negative, then the family of homogeneous Banach representations $(\mathcal{F}_s)$ are unitarizable over the all interval $[0,1]$.\\
If $\mathcal{H}_s$ stands for the unitarization of $\mathcal{F}_s$ one has in addition $\mathcal{H}_s^*=\mathcal{H}_{1-s}$ for $s\in[0,1]$, $\mathcal{H}_\half$ is a boundary representation and $\mathcal{H}_1$ is the trivial representation.}\\

Let $\mu_o$ be the Patterson-Sullivan measure at a basepoint $o\in X$ over the boundary $\dd X$ of $X$.
The proof of theorem 1 relies on the spectral positivity of a kernel operator $\mathcal{I}_s$ with $\half<s\le1$ defined over $L^2(\dd X,\mu_o)$ introduced in Subsection \ref{sub:int} and inspired by \cite{MR710827}.
More precisely the conditional negativity of the metric, $d$, over $X$ implies the positivity of $\mathcal{I}_s$.\\

A remarkable fact is that the positivity of $\mathcal{I}_s$ is the only obstruction for $\mathcal{F}_s$ to form a complementary series over $G$:\\

{\bf Theorem 2.}
\textit{Assume the operators $\mathcal{I}_s$ are positive over $L^2(\dd X,\mu_o)$ for all $s\in [\half,1]$, then the unitary $G$-representations $(\mathcal{H}_s)_{s\in[0,1]}$ form a complementary series.}\\

Note that even if the positivity of the operator $\mathcal{I}_s$ is only established for conditionally negative distances we conjecture that it remain true whenever the trivial representation of $G$ is not isolated.


\subsection{Outlines}$ $\\
After introducing standard facts about hyperbolic geometry and the Patterson-Sullivan theory in Section \ref{sec:prelim}, we introduced the family of Banach representations $\mathcal{F}_s$ and the operators $\mathcal{I}_s$, discussed above, in Section \ref{sec:banachcomp}.
The Section \ref{sec:pos} is dedicated to the proof of Theorem 1.
In Section \ref{sec:anamat} we analyze matrix coefficients of certain averages of operators related to those representations. 
This is our principal tool to investigate the properties of the representations $(\mathcal{H}_s)_{s\in[0,1]}$ in Section \ref{sec:end} where we prove Theorem 2.

\subsection{Acknowledgments}$ $\\
We wish to thank Uri Bader, Hengfei Lu and Andrzej Zuk for their remarks and helpful discussions.
We are also grateful to Adrien Boyer for introducing us to the broad subject of boundary representations.

\subsection{Notations and conventions}$ $\\
In order to avoid the escalation of constants coming from estimates up to controlled additive or multiplication error terms we will use the following conventions.
Given two real valued functions, $a,b$, over a set $Z$, we write $a\preceq b$ if there exists $C>0$ such that $a(z)\le Cb(z)$ for all $z\in Z$ and $a\asymp b$ if $a\preceq b$ and $b\preceq a$.
Analogously we write $a\lesssim b$ if there exists $c$ such that $a(z)\le b(z)+c$ and $a\simeq b$ if $a\lesssim b$ and $b\lesssim a$.\\

Given a topological space $Z$, the space of continuous functions over $Z$ is endowed with topology of uniform convergence on compact sets if nothing else is specified and is denoted $\cC(Z)$.
The space of Borel probability over $Z$ is considered with its $*$-weak topology. 
If $Z$ has a metric structure, $d$, we denote $\text{dim}_H(Z)$ its Hausdorff dimension of $(Z,d)$.\\

The notations $r$ and $R$ stands for specific constants introduced in subsection \ref{sub:cov} and $o\in X$ for a fixed basepoint.

\section{preliminaries}\label{sec:prelim}
In this section we introduce some material and facts needed throughout this paper.
The reader can refer to \cite{MR1214072} \cite{MR1325797} \cite{MR1341941} for further details.\\

A proper geodesic space $(X,d)$ is CAT(-1) if for any geodesic triangle $\Delta$ its comparison $\ol{\Delta}$ in $\mathbb{H}^2$ satisfies:
$$d(x,y)\le d_{\mathbb{H}^2}(\ol{x},\ol{y})$$
for all $x,y\in \Delta$.
This notion encapsulates a large family of spaces as Riemannian manifolds with sectional curvature bounded above by $-1$ as well as hyperbolic buildings \cite{MR1445387}.

\subsection{Hyperbolic spaces and compactifications.}
$ $\\

Given a basepoint $o\in X$, the \textit{Gromov product} over $X$ at $o\in X$ is defined as:
$$\langle x,y\rangle_o=\half(d(x,o)+d(y,o)-d(x,y))$$
for $x,y\in X$.
A space $(Z,d)$ is called Gromov hyperbolic if one can find a positive constant $c_Z\ge0$ such that:
$$\langle x,y\rangle_w\ge\min\{\langle x,z\rangle_w,\langle y,z\rangle_w\}-c_Z$$
for all $x,y,z,w\in Z$.
Every proper geodesic CAT(-1) space satisfies this condition \cite{MR1744486}.\\

Given a hyperbolic space $(X,d)$ a boundary, $\dd X$, called Gromov boundary is attached to it.
Let us recall a construction of this object and its properties.

\subsubsection{Hyperbolic boundaries regarded as equivalence classes of sequences}
$ $\\
A sequence of points $(x_n)_n$ in $X$ \textit{converges to infinity} if $\lim_{n,m}\langle x_n,x_m\rangle_o\rightarrow+\infty$.\\
Since $(X,d)$ is hyperbolic the relation defined over the set of sequences which converge to infinity, $X^\infty\subset X^\BN$ as:
$$(a_n)\mathcal{R}(b_n)\quad \text{if and only if}\quad \lim_{n,m}\langle a_n,b_m\rangle_o\rightarrow\infty$$
with $(a_n)_n,(b_n)_n\in X^\infty$, is a equivalence relation independent of the basepoint $o$.
The boundary $\dd X$ of $X$ is defined as the set of equivalence classes for this relation and the class of a sequence $(x_n)_n\in X^\infty$ is denoted $\lim_nx_n=\xi$.\\

The Gromov product extends to $\ol{X}=X\cup\dd X$ and in the case of CAT(-1) spaces this extension is continuous and given by the formula:
$$\langle x,y\rangle_o=\lim_{n,m}\langle x_n,y_n\rangle_o\in[0,+\infty]$$
where the sequences $(x_n)_n$ and $(y_m)_m$ converge respectively to $x$ and $y$ in $\ol{X}=X\cup\dd X$.

This extended Gromov product satisfies \cite{MR1744486}:
\begin{enumerate}
\item $\langle x,y\rangle_o=\infty$ if and only if $x,y\in\dd X$ and $x=y$;
\item $\langle x,y\rangle_o\ge\min\{\langle x,z\rangle_o,\langle z,y\rangle_o\}-2c_X$ for all $x,y,z\in \ol{X}$.\\
\end{enumerate}

Given $x\in X$ the kernel:
$$d_x:(\xi,\eta)\in\dd X\times\dd X\mapsto 
\begin{cases}
e^{-\langle\xi,\eta\rangle_x}\,\text{when $\xi\neq\eta$}\\
0\,\text{otherwise}
\end{cases}
$$
defines a distance over $\dd X$ called visual distance at $x$ \cite{MR1445387} that defines the natural topology over $\dd X$ independent of $x\in X$.

The space $\ol{X}=X\cup\dd X$ admits a compact metrizable topology that is compatible with the topology of $X$ and $\dd X$ such that 
a sequence $(x_n)_n$ in $\ol{X}$ converges to $\xi\in\dd X$ if and only if $\langle x_n,\xi\rangle_o\rightarrow\infty$.
In particular given a function $\vp\in\cC(\ol{X})$ its uniform continuity around points at infinity can be expressed as:
$$|\vp(\xi)-\vp(x)|\prec \go(\langle \xi,x\rangle_o)$$
for all $x\in\ol{X}$ and $\xi\in\dd X$, where $\go$ is a positive decreasing function with $\go(t)\xrightarrow{t\rightarrow+\infty}0$.\\

A essential object in our framework is the Busemann function $b$.
Given $\xi\in\dd X$, any geodesic $(\xi_t)_t$ that converges to $\xi$ and $x,y\in X$, $b_\xi(x,y)$ is defined as $\lim_td(x,\xi_t)-d(y,\xi_t)$.
The map $b_\bullet(\bullet,\bullet):\ol{X}\times X\times X\rightarrow\BR$ is continuous and satisfies:
$$\langle x,y\rangle_o=\half\sup_z[b_x(o,z)+b_y(o,z)]=\half[b_x(o,p)+b_y(o,p)]\in\BR_+$$
where $p$ belongs to the geodesic between $x,y\in \ol{X}$. 
Moreover for any $x\in \ol{X}$ the map $b_x:X\times X\rightarrow \BR$ is a additive cocycle over $X$.
As a consequence the family of visual distances $\{d_x\}_{x\in X}$ satisfies the conformal relation:
$$d_x(\xi,\eta)=e^{\half[b_\xi(x,x')+b_\eta(x,x')]}d_{x'}(\xi,\eta)$$
 for any $\xi,\eta\in \dd X$ and $x,x'\in X$.\\

The group of isometries, $\text{Is}(X)$, of $(X,d)$ is endowed with the compact-open topology that makes it locally compact and second countable.
Under these assumptions the action of $\text{Is}(X)$ on $(X,d)$ is continuous, has closed orbits and compact stabilizers.
Its action extends continuously to $\ol{X}$ by homeomorphisms and
satisfies the invariance relations: 
$$b_{g.x}(g.y,g.z)=b_x(y,z)$$ 
and
$$d_{g.x}(g.\xi,g.\eta)=d_{x}(\xi,\eta)$$
for any $g\in\text{Is}(X)$, $x\in\ol{X}$, $y,z\in X$ and $\xi,\eta\in \dd X$.\\




\subsection{Boundary retractions}
$ $\\
The limit set of a discrete subgroup $G\subset \text{Is}(X)$ is defined as $\Lambda(G)=\ol{G.o}\cap\dd X$.
Given $\Lambda(G)$ the geodesic hull $\mathcal{Q}(\Lambda(G))\subset X$ is the union of geodesics with endpoints in $\Lambda(G)$.
The group $G$ is called convex-cocompact if it acts cocompactly over $\mathcal{Q}(\Lambda(G))$ or equivalently if there exists a uniform constant $R_{X,G,o}$ such that any geodesic from $o$ to $\xi\in\Lambda(G)$ stays within a $R_{X,G,o}$-neighborhood of $G.o$. In particular cocompact subgroups are convex-cocompact \cite{MR1214072} \cite{Sullivan_1979}.\\ 

A measurable map $f:\ol{\mathcal{Q}(\Lambda(G))}\to\Lambda(G)$ is called \textit{boundary retraction} if
one can find $c_{X,G,f,o}$ such that $\langle f(x),x\rangle_o\ge d(o,x)-c_{X,G,f,o}$ and $f|_{\Lambda(G)}={\bf I}_{\Lambda(G)}$.
In particular $f$ is continuous at any point of $\Lambda(G)$.

The \textit{shadow} at $x\in X$ of radius $r$ from the basepoint $o\in X$ is defined as:
$$\mathcal{O}_o(x,\rho)=\{\xi\in\dd X|\,\text{$\langle\xi,x\rangle_o\ge d(x,o)-\rho$}\}$$
There exists $r_{X,o}>0$ large enough such that $\mathcal{O}_o(x,r_{X,o})\neq\emptyset$ for all $x\in G.o$. 
Using a compact fundamental domain for the action of $G$ over $\mathcal{Q}(\Lambda(G))$ one can show that a boundary retraction exists  (cf \cite{Boucher:2020aa}, \cite{MR3939562}).


For the rest of this paper a boundary retraction, $f$, is fixed.
Given $x\in \mathcal{Q}(\Lambda(G))$ and $g\in \text{Isom}(X,d)$ we denote $\hat{x}=f(x)\in\dd X$, $\hat{g}=f(g.o)\in\dd X$ and $\widecheck{g}=f(g^{-1}.o)\in\dd X$.



\subsection{The Patterson-Sullivan theory}$ $\\ \label{sec:disin}
Let $G\subset \text{Is}(X)$ be a discrete convex-cocompact subgroup of isometries of $(X,d)$ a proper CAT(-1) space that is non-elementary, i.e. $G$ is not virtually abelian.

\begin{defn}
The critical exponent of $G$ denoted $\gd\in[0,+\infty]$, is defined as the infimum over all $s\ge0$ such that the Poincaré integral at $s$:
$$\int_Ge^{-sd(g.o,o)}dg$$
is finite.
\end{defn}

It follows from \cite{MR1214072} that $G$ has finite critical exponent and is divergent, i.e. its Poincaré integral diverges at $s=\gd$.

\begin{defn}
A $\ga$-density, for some positive $\ga$, is a continuous $G$-equivariant map:
$$\mu_\bullet:X\mapsto \tprob(\dd X);\quad x\mapsto \mu_x$$
such that for all $x,y\in X$, $\mu_x\sim\mu_y$ and
$$\frac{d\mu_x}{d\mu_y}(\xi)=e^{-\ga b_\xi(x,y)}$$
for $[\mu]$-almost every $\xi\in\dd X$.
\end{defn}

Let us denote $m$ the push-forward of the Haar measure of $G$ over the orbit $G.o$.
A $\gd$-density over $\dd X$ can be constructed from $m$ as follows.\\
Let
$$\mu_{\bullet,t}:X\rightarrow\tprob(\ol{X});\quad x\mapsto\mu_{x,t}$$ 
be the map defined as:
$$d\mu_{x,t}(y)=\frac{1}{\mathcal{W}(t)}e^{-td(x,y)}dm(y)$$
where
$$\mathcal{W}(t)=\int_Xe^{-td(x,y)}dm(y)$$
and $t>\gd$.\\
The family $(\mu_{\bullet,t})_{t>\gd}$ is $G$-equivariant, equicontinuous and bounded on compact sets for any $t\in(\gd,\gd+1]$ \cite{MR1325797}. 
It is therefore pre-compact as elements of $\cC_G(X,\tprob(\ol{X}))$ and any of its accumulation point defines a $\gd$-density supported on the limit set of $G$, $\ol{G.o}\cap\dd X=\Lambda(G)$.

Under the assumptions over $G$ $\mu_\bullet$ is unique, non-atomic and ergodic \cite{MR1214072}.\\

We conclude this subsection with a alternative form of the Shadow lemma for the family $(\mu_{\bullet,t})_{t>\gd}$.
We shall need the following lemma.
\begin{lem}\label{lem:preshadow}
Given $0<\e\le1$, there exist $\gd<T$ and $r_1=r_1(o)$ such that:
$$\mu_{o,t}(\mathcal{C}(o,x,\rho))\le \e$$
where
$$\mathcal{C}(o,x,\rho)=\{z\in \ol{X}\,|\,\langle y,x\rangle_o\ge \rho\}\subset \ol{X}$$
for all $\gd<t\le T$, $x\in\ol{X}$ and $\rho\ge r_1$.
\end{lem}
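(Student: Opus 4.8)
The plan is to control the mass that the measures $\mu_{o,t}$ place on a Gromov-product ``cap'' $\mathcal{C}(o,x,\rho)$ uniformly in $x\in\ol X$ and in $t$ slightly above $\gd$, by writing the mass as an integral against $m$ and splitting the orbit $G.o$ according to whether an orbit point is deep inside the cap or not. Concretely, a point $y\in\ol X$ lies in $\mathcal{C}(o,x,\rho)$ exactly when $\langle y,x\rangle_o\ge\rho$; using the hyperbolic inequality $\langle x,y\rangle_o\ge\min\{\langle x,z\rangle_o,\langle z,y\rangle_o\}-2c_X$ one sees that membership in $\mathcal{C}(o,x,\rho)$ forces $y$ to be ``far in the direction of $x$'' in the sense that $d(o,y)\ge\rho$ and the geodesic $[o,y]$ fellow-travels $[o,x]$ up to distance roughly $\rho$. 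First I would record this geometric fact precisely as: if $y\in G.o\cap\mathcal{C}(o,x,\rho)$ then $d(o,y)\ge\rho-C$ for a constant $C=C(c_X)$, so that
$$\mu_{o,t}(\mathcal{C}(o,x,\rho))=\frac{1}{\mathcal{W}(t)}\int_{\mathcal{C}(o,x,\rho)}e^{-td(o,y)}\,dm(y)\le\frac{1}{\mathcal{W}(t)}\int_{\{y\in G.o\,:\,d(o,y)\ge\rho-C\}}e^{-td(o,y)}\,dm(y).$$
The right-hand side no longer depends on $x$, which is the key reduction.

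The second step is to estimate the tail integral $\int_{\{d(o,y)\ge\rho-C\}}e^{-td(o,y)}dm(y)$ relative to the full Poincaré-type integral $\mathcal{W}(t)=\int_X e^{-td(o,y)}dm(y)$. For $t>\gd$ both are finite, and I would bound the tail by $\sum_{n\ge\lfloor\rho-C\rfloor}e^{-tn}\,m(\{y\in G.o:n\le d(o,y)<n+1\})$ and use the standard Patterson--Sullivan growth estimate (which follows from convex-cocompactness and divergence, via \cite{MR1214072}, \cite{MR1325797}) that $m(\{y:n\le d(o,y)<n+1\})\preceq e^{(\gd+1)n}$, uniformly. This gives, for $\gd<t\le\gd+1$, a bound of the form $e^{-(t-\gd-1)(\rho-C)}$ times a geometric factor — actually one should be a bit more careful near $t=\gd$, where $\mathcal{W}(t)\to\infty$; the cleaner route is to note that $\mathcal{W}(t)\ge\sum_{n\le N}e^{-tn}m(\{n\le d(o,y)<n+1\})\succeq\sum_{n\le N}e^{(\gd-t)n}e^{(\gd-\e_0)n}$ for a fixed $N$ and some $\e_0>0$ coming from a lower growth bound on a divergent group, so $\mathcal{W}(t)$ stays bounded below by a positive constant for $t$ in a right-neighborhood $(\gd,T_0]$ of $\gd$. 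Combining, for $t\in(\gd,T_0]$,
$$\mu_{o,t}(\mathcal{C}(o,x,\rho))\preceq\frac{1}{\mathcal{W}(t)}\sum_{n\ge\rho-C}e^{-tn}e^{(\gd+1)n}\preceq e^{(\gd+1-t)(\rho-C)}\cdot\frac{1}{1-e^{\gd+1-t}}.$$

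The third step is to make the conclusion uniform by a judicious choice of $T$ and $r_1$. Fix $t$ close enough to $\gd+1$ — say take $T=\min\{T_0,\gd+\tfrac12\}$ — so that $\gd+1-t\ge\tfrac12$ and the geometric series constant $1/(1-e^{\gd+1-t})$ stays bounded; then the bound reads $\mu_{o,t}(\mathcal{C}(o,x,\rho))\le K\,e^{-(\rho-C)/2}$ for an absolute $K$ depending only on $(X,G,o)$. Now choose $r_1=r_1(o)$ large enough that $K e^{-(r_1-C)/2}\le\e$; then for all $\gd<t\le T$, all $x\in\ol X$, and all $\rho\ge r_1$ we get $\mu_{o,t}(\mathcal{C}(o,x,\rho))\le\e$, as desired. (If the interval $(\gd,T_0]$ turns out to be too short to contain a $t$ with $\gd+1-t\ge\tfrac12$ — i.e. if $\gd$ is small — one simply replaces $\tfrac12$ by any fixed $\eta>0$ with $\gd+1-\eta>\gd$, adjusting $K$ and $r_1$ accordingly; the argument is unchanged.)

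\textbf{Main obstacle.} The routine parts are the geometric ``fellow-traveling forces large $d(o,y)$'' lemma and the summation. The delicate point is the \emph{uniform} lower bound on $\mathcal{W}(t)$ as $t\downarrow\gd$: since $\mathcal{W}(t)$ is the normalizing constant of a Poincaré series at the critical exponent of a divergent group, one must be sure that it does not degenerate to $0$ on the relevant range of $t$, and that the $\gd+1$ in the exponent (rather than $\gd$) in the upper growth estimate is what actually holds for the measure $m$ — this is exactly the regime where \cite{MR1325797} gives equicontinuity and boundedness of $(\mu_{\bullet,t})$ for $t\in(\gd,\gd+1]$, so I expect the needed estimates to be available there, but assembling them into a single clean inequality uniform in $t$ is where the care goes. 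An alternative, possibly cleaner, route would be to avoid $t$-uniformity altogether by first passing to a weak-$*$ accumulation point (the Patterson--Sullivan density $\mu_o$) and proving the shadow bound there via the classical Shadow Lemma, then transferring back using equicontinuity; but since later sections apparently need the statement for the approximating family itself, I would carry out the direct argument above.
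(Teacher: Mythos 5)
Your proposal takes a direct Poincar\'e-tail estimate, while the paper uses a compactness/contradiction argument, and the direct route has a genuine gap that cannot be patched.

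First, the numerics are wrong. The orbit growth for a convex-cocompact group is $m(\{y : n\le d(o,y) < n+1\})\asymp e^{\gd n}$, not $e^{(\gd+1)n}$; with your exponent the tail $\sum_{n\ge\rho-C}e^{-tn}e^{(\gd+1)n}$ actually \emph{diverges} throughout the range $t\in(\gd,\gd+1)$ that the lemma cares about, and your final bound $e^{(\gd+1-t)(\rho-C)}$ with $\gd+1-t\ge\tfrac12$ tends to $+\infty$, not $0$, as $\rho\to\infty$ — a sign error. Second, and more fundamentally, even after correcting the growth to $\asymp e^{\gd n}$, the reduction $\mathcal{C}(o,x,\rho)\subset\{y: d(o,y)\ge\rho\}$ throws away all the angular information and yields the ratio
$$\mu_{o,t}(\mathcal{C}(o,x,\rho))\ \le\ \frac{\int_{\{d(o,y)\ge\rho\}}e^{-td(o,y)}\,dm(y)}{\mathcal{W}(t)}\ \preceq\ e^{(\gd - t)\lceil\rho\rceil},$$
which for any fixed $\rho$ tends to a constant of order $1$ as $t\downarrow\gd$, since both numerator and denominator blow up at the same rate. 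So the threshold $r_1$ that makes this $\le\e$ necessarily depends on $t-\gd$ and degenerates as $t\downarrow\gd$: the uniformity in $t$, which is the whole point of the lemma, is lost. There is no way to fix this by tuning exponents; you are bounding the mass of a narrow Gromov-product cone by the mass of an entire annulus whose $\mu_{o,t}$-measure approaches $1$ as $t\downarrow\gd$.

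The paper instead argues by contradiction and compactness: supposing the bound fails along $t_n\downarrow\gd$ and $\rho_n\to\infty$, it passes to a Gromov--Hausdorff limit of the caps $\mathcal{C}(o,x_n,\rho_n)$ and to the weak-$*$ limit $\mu_{o,t_n}\to\mu_o$, shows the limit set is a single boundary point, and derives $\mu_o(\mathcal{C}_\infty)\ge\e_0$, contradicting the atom-freeness of the Patterson--Sullivan measure. This is exactly the ingredient your approach never touches: atom-freeness of $\mu_o$, used through a compactness argument, is what controls the limit $t\downarrow\gd$ while a shadow-type estimate for $\mu_{o,t}$ cannot be assumed at this stage (it is what Lemma \ref{lem:equi1} proves \emph{from} this lemma). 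You do gesture at a variant of this in your last paragraph — pass to a weak-$*$ accumulation point and work with $\mu_o$ — but then set it aside; that is in fact the correct idea, and the ``transfer back'' you worried about is precisely the compactness bookkeeping the paper carries out.
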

\begin{proof}
Assume one can find $0<\e_0\le1$, $(\mathcal{C}_n=\mathcal{C}(o,x_n,\rho_n))_n$ with $\rho_n\rightarrow+\infty$ and $(t_n)_n$ with $t_n>\gd$ and $t_n\rightarrow \gd$ such that:
$$\mu_{o,t_n}(\mathcal{C}(o,x_n,\rho_n))\ge \e_0.$$
Then, up to a subsequence, we can assume that the sequence of closed sets, $(\mathcal{C}_n)_n$, inside of the metrizable compact space $\ol{X}$ converges for the Gromov-Hausdorff topology to $\mathcal{C}_\infty$.
Since $\rho_n\le\langle z,x_n\rangle_o\le d(o,z)$ for all $z\in\mathcal{C}_n\cap X$ one has $\mathcal{C}_\infty\subset\dd X$.
Moreover for all $\xi,\eta\in\mathcal{C}_\infty$ one can find $z_n\in\mathcal{C}_n\rightarrow\xi$ and $z'_n\in\mathcal{C}_n\rightarrow\eta$ it follows:
$$\langle\xi,\eta\rangle_x=\lim_n\langle z_n,z_n'\rangle_x\gtrsim\lim_n\min\{\langle z_n,x_n\rangle_o,\langle x_n,z_n'\rangle_o\}\ge \lim_n\rho_n=+\infty$$
in other words $\mathcal{C}_\infty$ is reduced to a single point.
Using the fact that $\mathcal{C}_n$ is Cauchy for the Gromov-Hausdorff distance for any $i$ one can find $N(i)$ such that:
$$\mathcal{C}_{n+k}\subset\mathcal{V}(\mathcal{C}_n,2^{-i})$$
for all $n\ge N(i)$ which implies for any $k\ge0$:
$$\mu_{o,t_{N(i)+k}}(\mathcal{V}(\mathcal{C}_{N(i)},2^{-i}))\ge \mu_{o,t_{N(i)+k}}(\mathcal{C}_{N(i)+k})\ge\e_0$$
At the limit over $k$ it follows:
$$\mu_{o}(\mathcal{V}(\mathcal{C}_{N(i)},2^{-i}))\ge\e_0$$
for all $i\ge0$ and therefore  $\lim_i\mu_{o}(\mathcal{V}(\mathcal{C}_{N(i)},2^{-i}))=\mu_{o}(\mathcal{C}_\infty)\ge\e_0$ which contradict the fact that $\mu_o$ is atom free.
\end{proof}

\begin{lem}[Shadow lemma]\label{lem:equi1}$ $\\
There exist $T>\gd$ and $r_0=r_0(o)$ such that:
$$\mu_{o,t}(\mathcal{U}(o,x,\rho))\asymp e^{\gd \rho}e^{-\gd d(o,x)}$$
where 
$$\mathcal{U}(o,x,\rho)=\{z\in \ol{X}\,|\,\langle z,x\rangle_o\ge d(o,x)-\rho\}\subset \ol{X}$$
$o\in X$, $x\in\mathcal{Q}(\Lambda(G))$, $\rho\ge r_0$ and $\gd<t\le T$.\\
In particular:
$$\mu_{o,t}(\mathcal{C}(o,\xi,\rho))\asymp e^{-\gd \rho}$$
for all $\xi\in\Lambda(G)$, $\rho\ge r_0$ and $\gd<t\le T$.
\end{lem}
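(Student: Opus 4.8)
The plan is to run the standard Sullivan shadow-lemma argument, but for the whole equicontinuous family $(\mu_{o,t})_{\delta<t\le T}$ simultaneously, using Lemma \ref{lem:preshadow} to get the uniformity in $t$. Write $U=\mathcal{U}(o,x,\rho)$ for brevity.

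\emph{Upper bound.} First I would pick $g\in G$ with $d(g.o,x)\le D$ for a constant $D$ depending only on $R_{X,G,o}$ (possible since $x\in\mathcal{Q}(\Lambda(G))$ lies in a bounded neighborhood of the orbit $G.o$). Using the $G$-equivariance of the family, $\mu_{o,t}(U)=\mu_{g^{-1}.o,t}(g^{-1}U)$, and the cocycle identity together with $d(g^{-1}.o,\cdot)$-estimates gives $\mu_{o,t}(U)\le e^{\delta b_\xi(o,g.o)+O(1)}\mu_{g.o,t}(U)$ for $\xi$ a center of the shadow; here the Radon–Nikodym derivative of $\mu_{o,t}$ with respect to $\mu_{g.o,t}$ is controlled because for $t\in(\delta,T]$ close to $\delta$ the densities $e^{-t b_\bullet}$ and $e^{-\delta b_\bullet}$ differ by a factor bounded uniformly on the relevant region (bounded Busemann displacement). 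Since $b_\xi(o,g.o)\simeq d(o,x)$ for $\xi$ in the shadow of $x$, the first factor is $\asymp e^{-\delta d(o,x)}$, and it remains to bound $\mu_{g.o,t}(U)$. Translating the shadow $U$ back, $g^{-1}U$ is contained in a set of the form $\mathcal{C}(o,y,\rho')$ with $\rho'\ge \rho-c$ for a uniform $c$ (a shadow of radius $\rho$ seen from far away becomes, after recentering, a ``complement of a large Gromov-product ball''), so Lemma \ref{lem:preshadow} applied with $r_1$ fixed gives $\mu_{g.o,t}(U)\preceq e^{-\delta\rho}$ once $\rho$ is large enough — but to get the exponential rate $e^{-\delta\rho}$ rather than a mere bound I would iterate: cover $U$ by finitely many translates of a fixed shadow and use the quasi-multiplicativity of $\mu_{o,t}$ on nested shadows that follows from equivariance plus Lemma \ref{lem:preshadow}. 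Choosing $r_0$ and $T$ so that Lemma \ref{lem:preshadow} applies with $\e$ small makes all these estimates uniform in $t\in(\delta,T]$.

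\emph{Lower bound.} For the reverse inequality I would again move the basepoint to $g.o$ with $d(g.o,x)$ bounded, so that the shadow $U$ contains, after recentering, a full neighborhood $\mathcal{O}_{g.o}(x',\rho-c)$ of the limit point determined by $x$; since $\mu_{o}$ has full support on $\Lambda(G)$ and $(\mu_{o,t})$ converges to a $\delta$-density supported on $\Lambda(G)$, such a neighborhood has mass $\gtrsim 1$ uniformly, using equicontinuity and boundedness on compacts of the family for $t\in(\delta,\delta+1]$ stated before Lemma \ref{lem:preshadow}. Pulling back through the conformal/cocycle factor $e^{\delta b_\xi(o,g.o)}\asymp e^{-\delta d(o,x)}$ as above gives $\mu_{o,t}(U)\gtrsim e^{\delta\rho}e^{-\delta d(o,x)}$.

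\emph{The special case.} For $\xi\in\Lambda(G)$, taking a sequence $x_n\to\xi$ along a geodesic ray from $o$ one has $d(o,x_n)\to\infty$ while $\mathcal{U}(o,x_n,\rho)\to\mathcal{C}(o,\xi,\rho)$, and $e^{\delta\rho}e^{-\delta d(o,x_n)}$ must be compared after normalizing; the cleaner route is to observe directly that $\mathcal{C}(o,\xi,\rho)$ is, up to bounded error in $\rho$, the shadow $\mathcal{U}(o,x,\rho')$ for $x$ the point at distance $\rho$ from $o$ on a ray to $\xi$ (so $d(o,x)=\rho$), whence $\mu_{o,t}(\mathcal{C}(o,\xi,\rho))\asymp e^{\delta\rho'}e^{-\delta\rho}\asymp e^{-\delta\rho}$ from the first part.

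\emph{Main obstacle.} The delicate point is the uniformity in $t$: the Radon–Nikodym derivatives $d\mu_{o,t}/d\mu_{g.o,t}=e^{-t b_\xi(o,g.o)}\mathcal{W}(t)^{-1}(\cdots)$ involve $\mathcal{W}(t)$, which blows up as $t\downarrow\delta$ (divergence of the Poincaré integral), so one cannot naively bound things by the $t=\delta$ density. The resolution is that all comparisons occur between ratios of $\mu_{o,t}$-masses, in which $\mathcal{W}(t)$ cancels, and the only $t$-dependence that survives is through $e^{-(t-\delta)b}$-type factors on a region where $b$ is bounded; shrinking $T$ toward $\delta$ (exactly as in Lemma \ref{lem:preshadow}) makes these factors lie in a fixed compact interval of $(0,\infty)$. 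Making this cancellation explicit while keeping track of the geometric constants $r,R$ is the bulk of the work; everything else is the textbook shadow-lemma bookkeeping with the Gromov-product triangle inequality and the convex-cocompactness bound $R_{X,G,o}$.
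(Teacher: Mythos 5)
Your overall shape (equicontinuity of $(\mu_{o,t})_t$, $G$-equivariance, Busemann cocycle control, Lemma~\ref{lem:preshadow} for the ``mass at least $1/2$'' step) is the right toolkit, and your handling of the special case $\mathcal C(o,\xi,\rho)$ — replace it by the shadow of a point at depth $\rho$ on the ray to $\xi$ — is exactly what the paper does. But the core upper-bound reduction is off. You pick $g$ with $d(g.o,x)\le D$, i.e.\ $g.o$ \emph{next to} $x$, and then want $\mu_{g.o,t}(U)\preceq e^{-\gd\rho}$. That cannot happen: translating back, $g^{-1}U=\mathcal U(g^{-1}.o,g^{-1}.x,\rho)$ is the shadow of a point at distance $\le D$ from $o$ seen from a viewpoint $g^{-1}.o$ at distance $\approx d(o,x)$, so for $\rho\ge r_0$ it covers almost all of $\ol X$ and has $\mu_{o,t}$-mass $\asymp 1$. (Your claim that $g^{-1}U\subset\mathcal C(o,y,\rho')$ has the inclusion reversed: it is the \emph{complement} of $g^{-1}U$ that sits inside a small Gromov-product ball — which is precisely how the paper invokes Lemma~\ref{lem:preshadow}, to get the \emph{lower} bound $\mu_{o,t}(\mathcal U(g^{-1}o,o,\rho))\ge\half$.) With $\mu_{g.o,t}(U)\asymp1$, the Busemann cocycle $e^{-tb_z(o,g.o)}$ varies over $U$ by a multiplicative factor $\asymp e^{2\gd\rho}$, so this route traps $\mu_{o,t}(U)$ only in the interval $[e^{-\gd d(o,x)},\,e^{2\gd\rho}e^{-\gd d(o,x)}]$ and never pins down the claimed $e^{\gd\rho}e^{-\gd d(o,x)}$. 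Your proposed fix — ``cover $U$ by finitely many translates of a fixed shadow and use quasi-multiplicativity'' — would need, to give the exponent $e^{\gd\rho}$, exactly the orbit-counting estimate that in this paper is \emph{derived from} the shadow lemma (Lemma~\ref{lem:count}), so as stated it is circular.

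The paper avoids all of this with a different re-centering. Instead of choosing $g.o$ near $x$, one chooses $g(x).o$ at depth $\simeq d(o,x)-\rho$ on the geodesic from $o$ towards $x$; by a CAT(-1)/tree-approximation argument the sandwich $\mathcal U(o,g(x).o,c_1)\subset\mathcal U(o,x,\rho)\subset\mathcal U(o,g(x).o,c_2)$ holds with \emph{universal} constants $c_1,c_2$. Then one only ever needs the estimate $\half e^{-\gd d(g.o,o)}\le\mu_{o,t}(\mathcal U(o,g.o,c))\le e^{2\gd c}e^{-\gd d(g.o,o)}$ for orbit points $g.o$ and \emph{fixed} depth $c$; the parasitic $e^{2\gd c}$ is now a genuine constant absorbed by $\asymp$, and the whole $\rho$-dependence comes from the placement $d(o,g(x).o)\simeq d(o,x)-\rho$. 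For that orbit-point estimate the paper writes $\mu_{o,t}(\mathcal U(o,g.o,\rho))=\mu_{g^{-1}.o,t}(\mathcal U(g^{-1}o,o,\rho))$, bounds the Busemann factor $b_z(g^{-1}.o,o)\in[d(o,g.o)-2\rho,\,d(o,g.o)]$ on $\mathcal U(g^{-1}o,o,\rho)$, and then, exactly as you suggest for the lower bound, uses Lemma~\ref{lem:preshadow} to show $\mu_{o,t}(\mathcal U(g^{-1}o,o,\rho))\ge\half$. So: keep your Busemann/equivariance machinery, keep the appeal to Lemma~\ref{lem:preshadow} for the lower bound, but replace ``pick $g$ near $x$'' by ``pick $g(x)$ at depth $d(o,x)-\rho$ and sandwich by fixed-depth shadows.''
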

\begin{proof}
It is enough to prove:
$$\half e^{-\gd d(g.o,o)}\le \mu_{o,t}(\mathcal{U}(o,g.o,c))\le e^{2\gd c}e^{-\gd d(g.o,o)}$$
for any $g\in G$ and $c\ge r_0$ and $\gd<t\le T$.
Indeed using a tree approximation argument (cf. \cite{MR1214072} Proposition 7.4 proof) one can find universal constants $c_1$ and $c_2$ such that for any $x\in X$ and $\rho\ge r_0$:
$$\mathcal{U}(o,g(x).o,c_1)\subset \mathcal{U}(o,x,\rho)\subset\mathcal{U}(o,g(x).o,c_2)$$
where $g(x)\in G$ with $d(o,g(x).o)\simeq d(o,x)-\rho$ if $x\in X$ and $d(o,g(x).o)\simeq \rho$ otherwise.\\

Given $g\in G$ one has:
\begin{align*}
\mu_{o,t}&(\mathcal{U}(o,g.o,\rho))=\mu_{o,t}(g.\mathcal{U}(g^{-1}.o,o,\rho))\\
&=\mu_{g^{-1}.o,t}(\mathcal{U}(g^{-1}o,o,\rho))=\int_{\mathcal{U}(g^{-1}o,o,\rho)}e^{-\gd b_z(g^{-1}.o,o)}d\mu_{o,t}(z)
\end{align*}
Since $b_z(g^{-1}.o,o)=2\langle z,o\rangle_{g^{-1}o}-d(o,g^{-1}.o)$ for all $g\in G$, $o\in X$ and $z\in\ol{X}$ it follows:
$$d(o,g.o)-2\rho\le b_z(g^{-1}.o,o)\le d(o,g.o)$$
for all $z\in \mathcal{U}(g^{-1}.o,o,\rho)$.
Therefore:
$$e^{-\gd d(g.o,o)}\mu_{o,t}(\mathcal{U}(g^{-1}o,o,\rho))\le \mu_{o,t}(\mathcal{U}_{o}(o,g.o,\rho))\le e^{2\gd \rho}e^{-\gd d(g.o,o)}\mu_{o,t}(\mathcal{U}(g^{-1}o,o,\rho))$$

On the other hand, for any $y,z\in \mathcal{U}(g^{-1}o,o,\rho)^c\subset\ol{X}$ one has:
$$\langle y,o\rangle_{g^{-1}.o}=d(g.o,o)-\langle y,g^{-1}.o\rangle_{o}$$
and similarly for $z$.
It follows:
$$\langle y,g^{-1}o\rangle_{o},\langle z,g^{-1}o\rangle_{o}\ge \rho.$$
Using the hyperbolic inequality one obtain:
$$\langle y,z\rangle_o\ge \min\{\langle g^{-1}.o,z\rangle_o,\langle g^{-1}.o,y\rangle_o\}-c_X\ge \rho-c_X$$
for some universal constant $c_X\ge0$.
In other words $\ol{X}\setminus \mathcal{U}(g^{-1}o,o,\rho)\subset \mathcal{C}(o,y,\rho-c_X)$ for any $y\in \mathcal{U}(g^{-1}o,o,\rho)^c$.
Lemma \ref{lem:preshadow} implies that one can find $r_1$ such that:
$$\mu_{o,t}(\ol{X}\setminus \mathcal{U}(g^{-1}o,o,\rho))\le\half $$
for all $\rho\ge r_1$ and thus:
$$\half e^{-\gd d(g.o,o)}\le \mu_{o,t}(\mathcal{U}_{o}(o,g.o,\rho))\le e^{2\gd \rho}e^{-\gd d(g.o,o)}$$
\end{proof}

As a consequence of Lemma \ref{lem:equi1} one has $\mu_o(\mathcal{O}_o(x,\rho))\neq 0$ for all $x\in \mathcal{Q}(\Lambda(G))$ and $\rho>r_0$. 

\subsection{Covering and equidistribution}$ $\\
\label{sub:cov}


The visual ball centered at $\xi\in \dd X$ of radius $e^{-\rho}$ is defined as:
$$B_o(\xi,e^{-\rho})=\{\eta\in\dd X\,|\,\langle \xi,\eta\rangle_o\ge \rho\}=\{\eta\in\dd X\,|\,d_o(\xi,\eta)\le e^{-\rho} \}$$

We introduce the straight and inverted $\varsigma$-cones from $o\in X$ to $\xi\in\dd X$ of radius $\rho$ respectively as:
$${\bf C}^+_{o}(\xi;\rho,\varsigma)=\{g\in G \,|\,\mathcal{O}_o(g.o,\varsigma)\cap B_o(\xi,e^{-\rho})\neq\emptyset\}$$
$${\bf C}^-_{o}(\xi;\rho,\varsigma)=\{g\in G \,|\,\mathcal{O}_o(g^{-1}.o,\varsigma)\cap B_o(\xi,e^{-\rho})\neq\emptyset\}$$
The two sided cone from $o\in X$ at $(\xi,\eta)\in\dd X\times\dd X$ is defined as:
$${\bf C}^{(2)}_{o}(\xi,\eta;\rho,\varsigma)={\bf C}^+_{o}(\xi;\rho,\varsigma)\cap{\bf C}^{-}_{o}(\eta;\rho,\varsigma)$$

Since $G$ is convex-cocompact there exists $R_{X,G,o}>0$ such that every geodesic from a basepoint $o\in X$ to $\xi\in \Lambda(G)$ is at distance at most $R_{X,G,o}$ from $G.o$. 
Let us fix once for all a universal constant $R\ge R_{X,G,o}$ such that for all $g,h\in G$, there exists $g'\in G$ with $d(g.o,g'.o)\le R$ and $d(gh.o,o)\ge d(g.o,o)+d(h.o,o)-2R$.
The existence of such a constant follow from Milnor lemma together with \cite{Garncarek:2014aa} Lemma 4.4.
In the rest $R$ will refer to this particular constant and we define:
$$S_{o,G}(t)=\{g\in G\,|\,tR\le d(g.o,o)<(t+1)R\}\subset G.$$


The following counting lemma corresponds to Lemma 4.3 of \cite{Garncarek:2014aa}:
\begin{lem}\label{lem:count}
Let $r_0$ be as in Lemma \ref{lem:equi1}. 
There exists $r\ge r_0$ such that for all $\xi\in\Lambda(G)$ and $t\ge \frac{\rho}{R}$:
 $$|{\bf C}^+_{o}(\xi;\rho,r)\cap S_{o,G}(t)|\asymp_{o,r} e^{\gd tR}e^{-\gd \rho}$$
\end{lem}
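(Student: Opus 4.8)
\textbf{Proof strategy for Lemma \ref{lem:count}.}
The plan is to translate the combinatorial count of group elements whose shadows meet a small visual ball into an estimate of Patterson--Sullivan mass, and then apply the Shadow lemma (Lemma \ref{lem:equi1}) together with divergence of the Poincar\'e series. First I would fix $\xi\in\Lambda(G)$ and $t\ge\rho/R$, and record the geometric meaning of membership in ${\bf C}^+_o(\xi;\rho,r)\cap S_{o,G}(t)$: if $g$ lies in this set then $\mathcal{O}_o(g.o,r)$ meets $B_o(\xi,e^{-\rho})$ and $tR\le d(g.o,o)<(t+1)R$. Using the hyperbolic inequality for the extended Gromov product and the definition of $R$ (so geodesics from $o$ to $\Lambda(G)$ stay within $R$ of $G.o$ and $d(gh.o,o)\gtrsim d(g.o,o)+d(h.o,o)-2R$), one shows that all such $g$ have $\hat{g}=f(g.o)$ within a bounded visual distance $e^{-(\rho - c)}$ of $\xi$, and conversely that the shadows $\mathcal{O}_o(g.o,r)$ for $g\in S_{o,G}(t)$ that are ``pointed near $\xi$'' together cover a visual ball $B_o(\xi,e^{-(\rho+c')})$ and are contained in $B_o(\xi,e^{-(\rho-c')})$, for suitable additive constants depending only on $X,G,o,r$.

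Next I would pass to measures. By the Shadow lemma, for $g\in S_{o,G}(t)$ each shadow satisfies $\mu_{o,t'}(\mathcal{O}_o(g.o,r))\asymp e^{\gd r}e^{-\gd d(g.o,o)}\asymp_{o,r} e^{-\gd tR}$, uniformly for $\gd<t'\le T$; and a visual ball $B_o(\xi,e^{-\rho})=\mathcal{C}(o,\xi,\rho)$ has $\mu_{o,t'}(B_o(\xi,e^{-\rho}))\asymp e^{-\gd\rho}$ for $\rho\ge r_0$. Summing the lower bound over the elements of ${\bf C}^+_o(\xi;\rho,r)\cap S_{o,G}(t)$ whose shadows are contained in a fixed ball of radius $e^{-(\rho-c')}$ about $\xi$ gives
$$|{\bf C}^+_o(\xi;\rho,r)\cap S_{o,G}(t)|\cdot e^{-\gd tR}\ \preceq_{o,r}\ \mu_{o,t'}\!\big(B_o(\xi,e^{-(\rho-c')})\big)\ \asymp\ e^{-\gd(\rho-c')}\ \asymp\ e^{-\gd\rho},$$
once one controls the bounded overlap (multiplicity) of the shadows, which follows from properness of the action and the uniform radius $r$. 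For the reverse inequality I would produce, for $g\in S_{o,G}(t)$ realizing a point of $\Lambda(G)$ near $\xi$, enough such elements so that the union of their shadows covers $B_o(\xi,e^{-(\rho+c')})$; comparing masses and using $\mu_{o,t'}(B_o(\xi,e^{-(\rho+c')}))\asymp e^{-\gd\rho}$ yields $|{\bf C}^+_o(\xi;\rho,r)\cap S_{o,G}(t)|\cdot e^{-\gd tR}\succeq_{o,r} e^{-\gd\rho}$. Here the hypothesis $t\ge\rho/R$ guarantees that the annulus $S_{o,G}(t)$ is ``deep enough'' that elements at distance $\sim tR$ can indeed point into a ball of radius $e^{-\rho}$, i.e. the cone is non-empty and the covering step is feasible; this is where convex-cocompactness (via $R$) and Lemma \ref{lem:equi1} (so that $\mu_o(\mathcal{O}_o(x,\rho))\neq0$ for $x\in\mathcal{Q}(\Lambda(G))$) are essential.

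The main obstacle I anticipate is the covering direction: one must exhibit, for every $t\ge\rho/R$, a family of group elements in $S_{o,G}(t)$ pointing toward $\xi$ whose shadows of the fixed radius $r$ genuinely cover the small ball $B_o(\xi,e^{-(\rho+c')})$ — not merely meet it — with controlled multiplicity. This requires the tree-approximation / fellow-traveling argument of \cite{MR1214072} (Proposition 7.4), applied to geodesics from $o$ to points of $\Lambda(G)$ near $\xi$, to find for each such boundary point an orbit point of $G$ at the right depth $\sim tR$ that shadows a definite neighborhood of it; convex-cocompactness (the constant $R_{X,G,o}$) ensures such orbit points exist within bounded distance of the geodesics, and a Vitali-type selection among them gives a bounded-overlap subfamily. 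Once both inequalities are in place, dividing through by $e^{-\gd tR}$ gives the claimed $|{\bf C}^+_o(\xi;\rho,r)\cap S_{o,G}(t)|\asymp_{o,r} e^{\gd tR}e^{-\gd\rho}$; the constant $r\ge r_0$ is chosen large enough (as in Lemma \ref{lem:equi1} and Lemma \ref{lem:preshadow}) so that all shadows involved are non-empty and the Shadow lemma estimates apply uniformly.
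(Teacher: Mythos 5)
The paper gives no proof of this lemma: it simply cites Lemma~4.3 of Garncarek's paper, so there is no ``paper's own proof'' to compare against. Your proposal follows what is, in fact, the standard route for such orbital counting estimates and is essentially the argument one finds in the cited source: translate cone membership into containment of shadows inside a slightly enlarged visual ball using hyperbolicity, packing (upper bound) via the Shadow Lemma mass $\asymp e^{-\gd tR}$ per shadow plus bounded multiplicity from properness, and covering (lower bound) via convex-cocompactness ensuring orbit points at every annular depth along geodesics in $\mathcal{Q}(\Lambda(G))$. The role you assign to the hypothesis $t\ge\rho/R$ is also correct: it is exactly what makes the shadows at depth $\sim tR$ small enough to pack into $B_o(\xi,e^{-\rho})$, so that the first step of the upper bound (containment in an enlarged ball) is available.

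One point you should tighten in a careful write-up: for $t'>\gd$ the measure $\mu_{o,t'}$ is supported on the orbit $G.o\subset X$, so it assigns zero mass to the boundary sets $\mathcal{O}_o(g.o,r)\subset\dd X$ and $B_o(\xi,e^{-\rho})\subset\dd X$ that appear in your mass comparison. The Shadow Lemma in the paper is formulated for the $\ol{X}$-cones $\mathcal{U}(o,x,\rho)$ and $\mathcal{C}(o,\xi,\rho)$, not their boundary traces. You should therefore either run the packing/covering argument with $\mathcal{U}(o,g.o,r)$ and $\mathcal{C}(o,\xi,\rho)$ in place of the boundary shadows and balls (in which case $\mu_{o,t'}$ is usable for each fixed $t'\in(\gd,T]$), or pass directly to the limit measure $\mu_o$, which is supported on $\Lambda(G)\subset\dd X$ and for which $\mu_o(\mathcal{U}(o,g.o,r))=\mu_o(\mathcal{O}_o(g.o,r))$ and $\mu_o(\mathcal{C}(o,\xi,\rho))=\mu_o(B_o(\xi,e^{-\rho}))$. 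With that correction the argument as you lay it out is complete up to the expected constant-chasing (hyperbolicity constant $c_X$, boundary-retraction constant $c_{X,G,f,o}$, and the displacement constant $R$) needed to fix $r$ and the slack $c'$ once and for all.
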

In the rest $r$ will refer to this particular constant.\\

The following covering and equidistribution results are slight improvements of results introduced in \cite{MR2787597}.

\begin{lem}[Vitali cover lemma]\label{lem:vitali}
There exists $r'>r$ such that for all $t>0$ there exist $S^*_{o,G}(t)\subset S_{o,G}(t)$ and a family of  measurable subsets $(O^{(2)}_{o}(g))_{g\in S^*_{o,G}(t)}$ of $\dd X\times\dd X$ such that:
\begin{enumerate}
\item $|S^*_{o,G}(t)|\asymp e^{\gd tR}$;
\item $\bigcup_{g\in S^*_{o,G}(t)}O^{(2)}_{o}(g)\supseteq\Lambda(G)\times\Lambda(G)$;
\item $O^{(2)}_{o}(g)\cap O^{(2)}_{o}(h)=\emptyset$ for all $g\neq h$ in $S^*_{o,G}(t)$;
\item $$\mathcal{O}_o(g.o,R(t))\times \mathcal{O}_o(g^{-1}.o,R(t))\subset O^{(2)}_{o}(g)$$
and
$$O^{(2)}_{o}(g)\subset \mathcal{O}_o(g.o,R'(t))\times \mathcal{O}_o(g^{-1}.o,R(t)')$$
where $R(t)=\half tR+r$ and $R'(t)=\half tR+r'$.
\end{enumerate}
\end{lem}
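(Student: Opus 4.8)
The plan is to build $S^*_{o,G}(t)$ by a greedy maximal‐packing argument on the product boundary $\Lambda(G)\times\Lambda(G)$, using the two‐sided cones ${\bf C}^{(2)}_{o}(\xi,\eta;\rho,r)$ as the basic "balls". First I would fix the scale $\rho=\rho(t)=\tfrac12 tR$, so that a group element $g\in S_{o,G}(t)$ (i.e. $d(g.o,o)\asymp tR$) has $d(g.o,o)-\rho\asymp\tfrac12 tR$, and the shadow $\mathcal{O}_o(g.o,R(t))$ with $R(t)=\tfrac12 tR+r$ is exactly the shadow whose $\mu_o$‐mass is $\asymp e^{-\gd(\tfrac12 tR)}$ by the Shadow Lemma (Lemma~\ref{lem:equi1}). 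The product shadow $\mathcal{O}_o(g.o,R(t))\times\mathcal{O}_o(g^{-1}.o,R(t))$ then has $\mu_o\otimes\mu_o$‐mass $\asymp e^{-\gd tR}$. This is the quantity that will force the count $|S^*_{o,G}(t)|\asymp e^{\gd tR}$ once we know the $O^{(2)}_o(g)$ tile $\Lambda(G)\times\Lambda(G)$ with bounded overlap.

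Next I would carry out the selection. Consider the (finite, by properness of the action and convex‐cocompactness) collection of cones $\{{\bf C}^{(2)}_{o}(\xi,\eta;\rho(t),r)\}$ as $(\xi,\eta)$ ranges over $\Lambda(G)\times\Lambda(G)$; concretely these correspond to elements of $S_{o,G}(t)$ whose straight shadow from $o$ captures $\xi$ and whose inverted shadow captures $\eta$, and by Lemma~\ref{lem:count} every $(\xi,\eta)$ lies in $\asymp_{o,r}1$ of them at this fixed scale. Run the classical $5r$‐covering (Vitali) argument: pick greedily a maximal disjoint subfamily indexed by $S^*_{o,G}(t)\subset S_{o,G}(t)$ of cones that are pairwise disjoint as subsets of $\Lambda(G)\times\Lambda(G)$, i.e. whose associated product shadows $\mathcal{O}_o(g.o,R(t))\times\mathcal{O}_o(g^{-1}.o,R(t))$ are disjoint; maximality plus the bounded‐multiplicity/thickening inequality from hyperbolic geometry (passing from radius $r$ to radius $r'=r'(r,c_X,R)$ enlarges each shadow enough to swallow any cone meeting it) gives that the enlarged product shadows $\mathcal{O}_o(g.o,R'(t))\times\mathcal{O}_o(g^{-1}.o,R'(t))$ cover $\Lambda(G)\times\Lambda(G)$. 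Then define $O^{(2)}_o(g)$ for $g\in S^*_{o,G}(t)$ by disjointification: enumerate $S^*_{o,G}(t)=\{g_1,g_2,\dots\}$ and set
$$O^{(2)}_o(g_i)=\Big(\mathcal{O}_o(g_i.o,R'(t))\times\mathcal{O}_o(g_i^{-1}.o,R'(t))\Big)\setminus\bigcup_{j<i}\Big(\mathcal{O}_o(g_j.o,R'(t))\times\mathcal{O}_o(g_j^{-1}.o,R'(t))\Big),$$
which automatically gives (2), (3), and the upper inclusion in (4); for the lower inclusion in (4) one must additionally check that the small product shadow $\mathcal{O}_o(g_i.o,R(t))\times\mathcal{O}_o(g_i^{-1}.o,R(t))$ of $g_i$ meets no enlarged product shadow of an earlier $g_j$ — this is exactly the disjointness built into the greedy selection at radius $r$ versus $r'$, i.e. one chooses $r'$ large enough (depending only on $r$, $c_X$, $R$) that two $r$‐disjoint straight shadows stay $r'$‐disjoint, a routine ping‐pong estimate with the Gromov inequality.

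Finally, for the count (1): the upper bound $|S^*_{o,G}(t)|\preceq e^{\gd tR}$ follows from disjointness of the $\mu_o\otimes\mu_o$‐masses $\asymp e^{-\gd tR}$ of the small product shadows together with $\mu_o(\Lambda(G))\mu_o(\Lambda(G))=1$; the lower bound $|S^*_{o,G}(t)|\succeq e^{\gd tR}$ follows because the enlarged product shadows cover $\Lambda(G)\times\Lambda(G)$ and each has $\mu_o\otimes\mu_o$‐mass $\preceq e^{\gd(2r'-2r)}e^{-\gd tR}\asymp e^{-\gd tR}$, so we need at least $\asymp e^{\gd tR}$ of them. I expect the main obstacle to be item (4), reconciling the two radii: one wants the \emph{same} selected family $S^*_{o,G}(t)$ to simultaneously give disjointness of the small shadows (needed for the upper count and for the lower inclusion) and covering by the large shadows (needed for (2) and the lower count), and keeping $R(t)$, $R'(t)$ of the prescribed affine form $\tfrac12 tR+r$, $\tfrac12 tR+r'$ with $r,r'$ genuinely independent of $t$ — this forces a careful bookkeeping of which constants in the shadow comparisons are additive (and hence absorbable into $r'-r$) versus multiplicative, and is where the hyperbolicity constant $c_X$, the convex‐cocompactness constant $R$, and the Shadow Lemma threshold $r_0$ all have to be pinned down in the right order.
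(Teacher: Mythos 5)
Your high-level strategy — a Vitali-type greedy packing of two-sided shadows, followed by a disjointification — is the same as the paper's. But your disjointification step has a genuine error, and the fix you propose is backwards, so as written the proof fails at exactly the point you flagged as the main obstacle.

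You define
$$O^{(2)}_o(g_i)=\Big(\mathcal{O}_o(g_i.o,R'(t))\times\mathcal{O}_o(g_i^{-1}.o,R'(t))\Big)\setminus\bigcup_{j<i}\Big(\mathcal{O}_o(g_j.o,R'(t))\times\mathcal{O}_o(g_j^{-1}.o,R'(t))\Big),$$
and claim the lower inclusion $\mathcal{O}_o(g_i.o,R(t))\times\mathcal{O}_o(g_i^{-1}.o,R(t))\subset O^{(2)}_o(g_i)$ because ``two $r$-disjoint straight shadows stay $r'$-disjoint'' for $r'>r$. This implication is reversed: $\mathcal{O}_o(x,R(t))\subset\mathcal{O}_o(x,R'(t))$ when $r<r'$, so passing from radius $r$ to $r'$ \emph{enlarges} each shadow and makes overlaps strictly easier, never harder. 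Indeed the whole point of the $5r$-covering lemma is that the enlarged balls of a maximal disjoint packing \emph{do} overlap each other (otherwise no enlargement would be needed to cover). Consequently the small shadow of $g_i$ can, and typically will, meet the enlarged shadow of an earlier $g_j$, and your $O^{(2)}_o(g_i)$ does not contain its own small shadow. No choice of $r'$ and no choice of enumeration repairs this.

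The paper avoids this by using the sharper disjointification from \cite{MR1232192} (Lemma~2, p.~15): after selecting $S^*_{o,G}(t)$ by the Vitali lemma, one sets, with $B^{(2)}_o(g,\cdot)$ the corresponding product of visual balls,
$$O^{(2)}_{o}(g_k)=B^{(2)}_o(g_k,5e^{-\gd(R(t)-C)})\bigcap\Big[\bigcup_{j<k}O^{(2)}_{o}(g_j)\Big]^c\bigcap\Big[\bigcup_{j>k}B^{(2)}_o(g_j,e^{-\gd(R(t)-C)})\Big]^c,$$
i.e.\ one removes from the enlarged ball both the previously constructed sets \emph{and the small balls of all later indices}. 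Then the lower inclusion in~(4) follows because the small balls are pairwise disjoint by the Vitali selection (so removing later small balls and earlier $O$'s, which by induction avoid later small balls, never touches $B^{(2)}_o(g_k,e^{-\gd(R(t)-C)})$), disjointness~(3) is built into the first removal, and covering~(2) still holds by taking, for a point outside all small balls, the smallest $k$ with $p$ in the $k$-th enlarged ball. Once~(4) holds, your count in~(1) goes through exactly as you wrote it.
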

\begin{proof}
Using Lemma 4.2 \cite{Garncarek:2014aa} one has:
$$\Lambda(G)\times\Lambda(G)\subset\bigcup_{g\in S_{o,G}( t)} \mathcal{O}_o(g.o,R(t))\times \mathcal{O}_o(g^{-1}.o,R(t))$$
Since $\mathcal{O}_o(g.o,r)\subset B_o(\hat{g},e^{-\gd(R(t)-C)})$ for some $C\ge0$ and $g\in S_{o,G}( t)$ one has:
$$\Lambda(G)\times\Lambda(G)\subset\bigcup_{g\in S_{o,G}(t)} B_o(\hat{g},e^{-\gd(R(t)-C)})\times B_o(\widecheck{g},e^{-\gd(R(t)-C)})$$
Using Vitali Lemma \cite{MR1232192} one can find $S^*_{o,G}(t)\subset S_{o,G}(t)$ such that:
$$\Lambda(G)\times\Lambda(G)\subset\bigcup_{g\in S^*_{o,G}(t)} B_o(\hat{g},5e^{-\gd(R(t)-C)})\times B_o(\widecheck{g},5e^{-\gd(R(t)-C)})$$
and
$$B_o(\hat{g},e^{-\gd(R(t)-C)})\times B_o(\widecheck{g},e^{-\gd(R(t)-C)})\bigcap B_o(\hat{h},e^{-\gd(R(t)-C)})\times B_o(\widecheck{h},e^{-\gd(R(t)-C)})=\emptyset$$
for all $g\neq h$ in $S^*_{o,G}(t)$.
Following \cite{MR1232192} Lemma 2 p.15 we define $O^{(2)}_{o}(g)$ that satisfies $(2)$, $(3)$ and $(4)$ with $g\in S^*_{o,G}(t)$ by induction as:
$$O^{(2)}_{o}(g_k)=B^{(2)}_o(g,5e^{-\gd(R(t)-C)})\bigcap[\bigcup_{j<k}O^{(2)}_{o}(g_j)]^c\bigcap[\bigcup_{k<j}B^{(2)}_o(g_j,e^{-\gd(R(t)-C)})]^c$$
where we enumerate $S^*_{o,G}(t)=\{g_1,\dots g_{|S^*_{o,G}(t)|}\}$ and $B^{(2)}_o(g,5e^{-\gd(R(t)-C)})$ stands for $B_o(\hat{g},e^{-\gd(R(t)-C)})\times B_o(\widecheck{g},e^{-\gd(R(t)-C)})$.
It follows from $(4)$ that
$$1=\mu_o\otimes \mu_o(\Lambda(G)\times\Lambda(G))=\sum_{g\in S^*_{o,G}(t)}\mu_o\otimes \mu_o(O^{(2)}_{o}(g))\asymp |S^*_{o,G}(t)|e^{-\gd tR}$$
and thus
$$|S^*_{o,G}(t)|\asymp e^{\gd tR}$$
\end{proof}

As a consequence of the Lebesgue differentiation theorem for Ahlfors regular measures:
\begin{cor}
The sequence  of probabilities $(\nu_{o,t})_{t>0}$ over $G$ defined as:
$$\nu_{o,t}=\int_{S^*_{o,G}(t)}\mu_o(O^{(2)}_{o}(g)).\mathcal{D}_{g}dg$$
,where $\mathcal{D}_{g}$ stands for the Dirac mass at $g\in G$, are supported over $S^*_{o,G}(t)$ for any $t>0$ and satisfies:
$$\int_G\Psi(g.o,g^{-1}.o)d\nu_{o,t}(g)\rightarrow \int_{\dd X\times\dd X}\Psi(\xi,\eta)d\mu_o(\xi)d\mu_o(\eta)$$
for any $\Phi\in\cC(\dd X\times \dd X)$.
\end{cor}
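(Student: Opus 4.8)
The plan is to combine the partition property of the family $\big(O^{(2)}_{o}(g)\big)_{g\in S^*_{o,G}(t)}$ supplied by Lemma \ref{lem:vitali} with the fact that these sets contract uniformly around the retracted pairs $(\hat g,\widecheck g)$ as $t\to\infty$. First I would record that $\nu_{o,t}$ is a probability measure supported on $S^*_{o,G}(t)$: the support is immediate from the definition, and items $(2)$, $(3)$ of Lemma \ref{lem:vitali} say that $\big(O^{(2)}_{o}(g)\big)_{g\in S^*_{o,G}(t)}$ is a measurable partition of a set containing $\Lambda(G)\times\Lambda(G)$, so, $\mu_o$ being a probability carried by $\Lambda(G)$,
$$\sum_{g\in S^*_{o,G}(t)}\mu_o\otimes\mu_o\big(O^{(2)}_{o}(g)\big)=\mu_o\otimes\mu_o\big(\Lambda(G)\times\Lambda(G)\big)=1 .$$
Here $\Psi(g.o,g^{-1}.o)$ is read as $\Psi(\hat g,\widecheck g)$ through the boundary retraction $f$ (equivalently one may take $\Psi$ continuous on $\ol X\times\ol X$, since $g.o\to\dd X$ along $S^*_{o,G}(t)$). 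Using the partition once more,
$$\int_G\Psi(\hat g,\widecheck g)\,d\nu_{o,t}(g)-\int_{\dd X\times\dd X}\Psi\,d\mu_o\otimes\mu_o=\sum_{g\in S^*_{o,G}(t)}\int_{O^{(2)}_{o}(g)}\big(\Psi(\hat g,\widecheck g)-\Psi(\xi,\eta)\big)\,d\mu_o(\xi)\,d\mu_o(\eta).$$

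The key step is a uniform geometric estimate on the size of $O^{(2)}_{o}(g)$ relative to $(\hat g,\widecheck g)$. Fix $t$ large and $g\in S^*_{o,G}(t)$, so that $d(g.o,o)=d(g^{-1}.o,o)\ge tR$. By item $(4)$ of Lemma \ref{lem:vitali}, every $(\xi,\eta)\in O^{(2)}_{o}(g)$ has $\xi\in\mathcal{O}_o(g.o,R'(t))$ and $\eta\in\mathcal{O}_o(g^{-1}.o,R'(t))$ with $R'(t)=\half tR+r'$, whence $\langle\xi,g.o\rangle_o\ge d(g.o,o)-R'(t)\ge\half tR-r'$ and likewise $\langle\eta,g^{-1}.o\rangle_o\ge\half tR-r'$. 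The defining property of the boundary retraction gives $\langle\hat g,g.o\rangle_o\ge d(g.o,o)-c_{X,G,f,o}$ and $\langle\widecheck g,g^{-1}.o\rangle_o\ge d(g^{-1}.o,o)-c_{X,G,f,o}$. Feeding these into the hyperbolic inequality for the extended Gromov product yields, for all $t$ large,
$$\langle\xi,\hat g\rangle_o\ \ge\ \half tR-r'-2c_X,\qquad \langle\eta,\widecheck g\rangle_o\ \ge\ \half tR-r'-2c_X ,$$
i.e. $d_o(\xi,\hat g),\,d_o(\eta,\widecheck g)\le e^{-(\frac12 tR-r'-2c_X)}=:\e_t$, a bound uniform in $g\in S^*_{o,G}(t)$ and in $(\xi,\eta)\in O^{(2)}_{o}(g)$, with $\e_t\to 0$. (Here convex-cocompactness, through item $(4)$, is what makes $R'(t)\sim\half tR$ genuinely smaller than the lower bound $tR$ on $d(g.o,o)$, so the relevant shadows really do shrink.)

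Finally, $\Psi$ is uniformly continuous on the compact metric space $\dd X\times\dd X$; letting $\go_\Psi$ be a modulus of continuity, the sum above is bounded in absolute value by
$$\go_\Psi(\e_t)\sum_{g\in S^*_{o,G}(t)}\mu_o\otimes\mu_o\big(O^{(2)}_{o}(g)\big)=\go_\Psi(\e_t)\ \xrightarrow{t\to+\infty}\ 0 ,$$
which proves the convergence. The Ahlfors regularity of $\mu_o$ provided by the Shadow lemma is used only to ensure $\mu_o\otimes\mu_o\big(O^{(2)}_{o}(g)\big)\asymp e^{-\gd tR}>0$, so that $\nu_{o,t}$ is a bona fide probability; the invocation of Lebesgue differentiation then amounts to the elementary fact that the average of a continuous function over a set shrinking to a point converges to its value at that point. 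I expect the only genuinely delicate point to be keeping the geometric estimate of the previous paragraph uniform over the exponentially large set $S^*_{o,G}(t)$; the remainder is bookkeeping.
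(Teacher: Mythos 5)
Your argument is correct and follows the intended route: exploit the measurable partition $\big(O^{(2)}_o(g)\big)_{g\in S^*_{o,G}(t)}$ of $\Lambda(G)\times\Lambda(G)$ from Lemma \ref{lem:vitali}, telescope the difference of integrals over this partition, and show that each cell concentrates around $(\hat g,\widecheck g)$ at a rate uniform in $g\in S^*_{o,G}(t)$. Your geometric estimate is carefully done: item $(4)$ of Lemma \ref{lem:vitali} gives $\langle\xi,g.o\rangle_o\ge\half tR-r'$, the defining property of the boundary retraction gives $\langle\hat g,g.o\rangle_o\ge d(g.o,o)-c_{X,G,f,o}$, and two applications of the hyperbolic inequality yield $d_o(\xi,\hat g), d_o(\eta,\widecheck g)\le e^{-(\frac12 tR-r'-2c_X)}$ uniformly in $g$ for $t$ large, which is exactly the point where convex-cocompactness (through $R'(t)\sim\frac12 tR < tR$) enters. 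The only place your treatment departs slightly from the paper is that the paper invokes ``the Lebesgue differentiation theorem for Ahlfors regular measures'' (via Theorem 3.2 of \cite{MR3939562}), whereas you instead use the uniform continuity of $\Psi$ directly, bounding the error by $\go_\Psi(\e_t)$; for continuous test functions this is the elementary core of the same idea and is entirely adequate for the statement as written, since $\Psi\in\cC(\dd X\times\dd X)$ (or $\cC(\ol X\times\ol X)$, under the natural reading that makes $\Psi(g.o,g^{-1}.o)$ meaningful, which you correctly flag). The Ahlfors regularity is indeed only needed, as you note, to ensure via Lemma \ref{lem:vitali}(1) and the shadow lemma that the weights $\mu_o\otimes\mu_o(O^{(2)}_o(g))$ are comparable and positive, i.e., that $\nu_{o,t}$ is a genuine probability supported on all of $S^*_{o,G}(t)$.
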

The proof follows the arguments of Theorem 3.2 of \cite{MR3939562}.
\section{Parabolic induction from a geometric perspective}\label{sec:banachcomp}
As above $(X,d)$ stands for a proper CAT(-1) space, $G$ a discrete non-elementary convex-cocompact subgroup of isometries and $\mu_\bullet$ its $\gd$-conformal density over $\dd X$.

Under our assumptions the probability $\mu_o$ over $\dd X$ has Hausdorff dimension $\gd$ which means that:
$$\sup_\xi\int_{\dd X} \frac{1}{d_{o}^{\ga}(\xi,\eta)}d\mu_o(\eta)<+\infty$$
 for all $0\le\ga<\gd$ \cite{MR1878587} \cite{Sullivan_1979}.

\subsection{Skew product and Homogeneous functions}$ $\\
The skew-product $\mathcal{H}(X)=\dd X\times\BR_+^*$ is the $G$-dynamical system defined as 
$g.(\xi,t)=(g.\xi,\frac{dg^{-1}\mu_o}{d\mu_o}(\xi)t)$ with $g\in G$ and $(\xi,t)\in\mathcal{H}(X)$ together with the infinite $G$-invariant measure $d\hat{\mu}(\xi,t)=d\mu_o(\xi) \frac{dt}{t^2}$.\\

\textit{The guideline of our approach is the parabolic induction:\\
As an example let $G=SO(n,1)$, $n\ge1$, be the isometry group of the symmetric space $X=\mathbb{H}^n=G/K$ where $K\simeq SO(n)$ is a maximal compact subgroup of $G$ and $\dd X\simeq G/P$ the Furstenberg boundary of $\mathbb{H}^n$ with $P$ a minimal parabolic subgroup.\\
Denote $A\simeq(\BR^*_+,\times)$ a Cartan subgroup of $G$, $N$ a maximal unipotent subgroup and $M= K\cap P$.
A unitary character, $\chi$, over $A$ and a irreducible representation, $\pi_M$, of $M$ naturally produce a representation of the Levy group $AM\simeq A\times M$ which extends to $P=MAN$. Note that $N$ is normal inside of $P$ and $\chi\times\pi_M$ is taken to be trivial over $N$.\\
The parabolic induction with parameter $(\chi,\pi_M)$ is therefore obtained as a Hilbert completion of the space of sections:
$$\mathcal{F}_{(\chi,\pi_M)}=\{f\in \cC(G;{\bf H}_{\chi\times\pi_M})\,|\,\text{$f(pg)=\chi\times\pi_M(p)f(g)$ and $\ol{\text{supp}(f)P}\subset G/P$ compact}\}$$
where ${\bf H}_{\chi\times\pi_M}$ stands for the representation of $P$.
It is known that in the spherical case, i.e. when $\pi_M=1$, the space of sections $\mathcal{F}_{\chi}$ associated to certain non unitary characters of $A$ admits a $G$-unitary structure.
These representations called unitary spherical complementary series are our objects of investigation.\\}

\textit{
In the spherical case the space of sections identifies with functions, $f$, over $MN\backslash G$ that satisfy $f(aMNg)=\chi(a)f(MNg)$ for all $g\in G$ and $a\in A$.
From a geometric stand point where $X=\mathbb{H}^n$ is regarded as a CAT(-1) space and $\dd X\simeq \mathbb{S}^{n-1}$ as its geometric boundary, the subgroup $P$ is identified with the stabilizer of a point $\xi\in\mathbb{S}^{n-1}$ at infinity, $MN\subset P$ as the stabilizer of the horospheres centered around this point and where $A\subset P$ acts by homogeneous dilations on these horospheres.\\
Our approach consists to identify $({MN}\backslash G,d\ol{g})$ with the skew product $(\mathcal{H}(X),\hat{\mu})$ which corresponds in the geometric setting to the horospherical foliation over the space $X$ \cite{MR2057305}.\\}

A continuous function $f$ on $\mathcal{H}(X)$ is called \textit{$s$-homogeneous} for a complex parameter $s\in \BC$ if
$$f(\xi,tt')=t^sf(\xi,t')$$
for $\hat{\mu}$-almost every $(\xi,t'),(\xi,tt')\in \mathcal{H}(X)$.\\
The space of $s$-homogeneous functions, denoted $\mathcal{F}_s$, is an analogy with the space of $A$-equivariant sections over $MN\backslash G$ for the character $\chi(a)=a^{s}$.\\

These spaces are $G$-invariant and together with the norm:\\
$$\|f\|_{\infty,s}=\text{ess sup}_{(\xi,t)}|t^{-s}f(\xi,t)|$$
for $f\in\mathcal{F}_s$ they define a family of Banach $G$-representations.

Given the character $\chi_s: t\mapsto t^s$ over $(\BR_+^*,\times)$, the operator:
$$e_s:\cC(\dd X)\rightarrow (\cC(\mathcal{H}(X)),\|\,.\,\|_{\infty,s});\quad \vp\mapsto \chi_s\times \vp$$
induces an isometry between $\cC(\dd X)$ and the space of $s$-homogeneous functions with inverse:
$$p:\cC(\mathcal{H}(X))\rightarrow \cC(\dd X);\quad f\mapsto [\xi\mapsto \frac{1}{2\pi}\int_{\BR_+^*} t^{-s}f(\xi,t)\frac{dt}{1+t^2}]$$

Observe that:
$$g.f({\xi},t)=f(g^{-1}.\xi,\frac{dg\mu_o}{d\mu_o}(\xi)t)=[\frac{dg\mu_o}{d\mu_o}(\xi)]^{s}f(g^{-1}.{\xi},t)$$
for all $f\in \mathcal{F}_s$, $g\in G$ and  $\hat{\mu}$-almost every $(\xi,t)\in \mathcal{H}(X)$.

If $\pi_s:G\times \cC(\dd X)\rightarrow \cC(\dd X)$ denote the Banach $G$-representation defined as:
$$\pi_s(g)(\vp)({\xi})=[\frac{dg\mu_o}{d\mu_o}(\xi)]^{s}\vp(g^{-1}.{\xi})$$
for $g\in G$, $\vp\in\cC(\dd X)$ and $\xi\in \dd X$.
The operator $e_s$ defines an isomorphism between the Banach representations $\mathcal{F}_s$ and $(\cC(\dd X),\pi_s)$ that can be seen as a compact realization of the representations $\mathcal{F}_s$ for $s\in\BC$.\\

A natural coupling between $\mathcal{F}_s$ and $\mathcal{F}_{1-s}$ exists :
\begin{lem}\label{lem:dual}
The bilinear map:
$$Q:\mathcal{F}_s\times\mathcal{F}_{1-s}\rightarrow\BC;\quad (f_1,f_2)\mapsto \int_{\dd X} f_1(\xi)\ol{f_2}(\xi)d\mu_o(\xi)$$
is continuous and $G$-invariant.
Moreover the operator $Q:\mathcal{F}_s\rightarrow\mathcal{F}_{1-s}^*$ defines an $G$-intertwiner with 
$$\Ker[Q]=\{f\in\mathcal{F}_s\,|\,f|_{\Lambda(G)}=0\}.$$
\end{lem}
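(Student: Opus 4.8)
The plan is to separate the claim into three parts: the well-definedness and continuity of $Q$ as a bilinear form, the $G$-invariance, and the identification of the kernel of the induced operator. For the first part, I would use the compact realizations $e_s$ and $e_{1-s}$: under these, $f_1 = e_s(\vp_1)$ with $\vp_1 \in \cC(\dd X)$ and similarly $f_2 = e_{1-s}(\vp_2)$, and since the visual realization is an isometry onto $\mathcal{F}_s$ (resp. $\mathcal{F}_{1-s}$), the pairing reads $Q(f_1,f_2) = \int_{\dd X}\vp_1(\xi)\ol{\vp_2}(\xi)\, d\mu_o(\xi)$, so $|Q(f_1,f_2)| \le \|\vp_1\|_\infty \|\vp_2\|_\infty = \|f_1\|_{\infty,s}\|f_2\|_{\infty,1-s}$ since $\mu_o$ is a probability. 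That gives continuity with norm $\le 1$ immediately, and also shows $Q$ takes finite values on all of $\mathcal{F}_s \times \mathcal{F}_{1-s}$.

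For $G$-invariance, the key is the conformal/Radon–Nikodym behavior recorded just before the lemma: $\pi_s(g)\vp(\xi) = \left[\tfrac{dg\mu_o}{d\mu_o}(\xi)\right]^{s}\vp(g^{-1}\xi)$. I would compute $Q(g.f_1, g.f_2)$ in the compact picture, where it equals
\[
\int_{\dd X}\left[\tfrac{dg\mu_o}{d\mu_o}(\xi)\right]^{s}\vp_1(g^{-1}\xi)\,\overline{\left[\tfrac{dg\mu_o}{d\mu_o}(\xi)\right]^{1-s}\vp_2(g^{-1}\xi)}\,d\mu_o(\xi).
\]
Since $\tfrac{dg\mu_o}{d\mu_o}(\xi) = e^{-\gd b_{\xi}(g^{-1}o, o)}$ is real and positive, the exponents $s$ and $\overline{1-s} = 1-\bar s$ combine; one should be slightly careful here and note the convention forces $\bar s$ — but for the coupling to be $G$-invariant for \emph{all} $s\in\BC$ one needs the pairing as written to actually be $s + \overline{1-s}$, which is $1$ only when $s$ is real; in the complex case the correct statement couples $\mathcal{F}_s$ with $\mathcal{F}_{1-\bar s}$, and I expect the paper intends $s\in\BR$ (or $\Re s \in [0,1]$ with the conjugate convention built into the definition of $\mathcal{F}_{1-s}$) so that the Radon–Nikodym factors multiply to $\tfrac{dg\mu_o}{d\mu_o}(\xi)$. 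Granting that, the change of variables $\eta = g^{-1}\xi$ together with $d(g\mu_o)(\xi) = \tfrac{dg\mu_o}{d\mu_o}(\xi)\,d\mu_o(\xi)$ and $\int F(g^{-1}\xi)\,d(g\mu_o)(\xi) = \int F(\eta)\,d\mu_o(\eta)$ turns the integral back into $\int_{\dd X}\vp_1(\eta)\ol{\vp_2}(\eta)\,d\mu_o(\eta) = Q(f_1,f_2)$. Hence $Q$ is $G$-invariant, which is exactly the statement that the induced map $Q : \mathcal{F}_s \to \mathcal{F}_{1-s}^*$, $f_1 \mapsto Q(f_1,\,\cdot\,)$, intertwines $\pi_s$ with the contragredient of $\pi_{1-s}$.

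For the kernel, $f_1 \in \Ker[Q]$ means $\int_{\dd X} f_1 \ol{f_2}\, d\mu_o = 0$ for every $f_2 \in \mathcal{F}_{1-s}$, equivalently (via $e_{1-s}$) for every $\vp_2 \in \cC(\dd X)$. Since $\cC(\dd X)$ is dense in $L^2(\dd X,\mu_o)$ and $f_1 \in \cC(\dd X) \subset L^2$, this forces $f_1 = 0$ in $L^2(\dd X,\mu_o)$, i.e. $f_1 = 0$ $\mu_o$-almost everywhere. Because $\mu_o$ is supported exactly on $\Lambda(G)$ (Patterson–Sullivan theory, Section \ref{sec:disin}) and $f_1$ is continuous, vanishing $\mu_o$-a.e. is equivalent to $f_1$ vanishing on $\supp\mu_o = \Lambda(G)$, which is $f_1|_{\Lambda(G)} = 0$. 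Conversely if $f_1|_{\Lambda(G)} = 0$ then $f_1 = 0$ $\mu_o$-a.e. and every pairing vanishes, so the two descriptions of the kernel coincide.

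\textbf{Main obstacle.} The only genuinely delicate point is the bookkeeping of complex conjugation in the homogeneity exponent: making the Radon–Nikodym factors $\left[\tfrac{dg\mu_o}{d\mu_o}\right]^{s}$ and $\left[\tfrac{dg\mu_o}{d\mu_o}\right]^{\overline{1-s}}$ multiply to exactly the Jacobian $\tfrac{dg\mu_o}{d\mu_o}$ needed for the change of variables. This works cleanly for real $s$ (and on $\Re s = \tfrac12$ gives the unitary structure), and I would either restrict to real parameters or absorb the conjugate into the definition of $\mathcal{F}_{1-s}$; the rest is the routine density/support argument above.
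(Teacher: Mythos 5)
Your proof is correct and follows essentially the same route as the paper's: both establish continuity from $\mu_o$ being a probability, both prove $G$-invariance by the change of variables $\xi\mapsto g.\xi$ and the Radon--Nikodym cocycle relation, and both identify the kernel by noting that $f\in\Ker[Q]$ forces $f=0$ $\mu_o$-a.e.\ and that $\mu_o$ is supported on $\Lambda(G)$. Your remark about the complex-conjugation bookkeeping is a genuine subtlety that the paper glosses over (its final step $\int\vp\,[\tfrac{dg^{-1}\mu_o}{d\mu_o}]^{1-s}\ol{\phi}(g.\bullet)\,d\mu_o=Q(\vp,g^{-1}\phi)$ silently uses that the $(1-s)$-power is real, which requires $s\in\BR$); since the complementary series is parametrized by $s\in[0,1]$, the paper never hits this obstruction, but you are right that for general $s\in\BC$ the natural coupling is between $\mathcal{F}_s$ and $\mathcal{F}_{1-\bar s}$.
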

\begin{proof}
Since $\mu_o$ is a probability the continuity of the bilinear map $Q$ is trivial.

For all $s\in\BC$, $\vp\in\mathcal{F}_s$, $\phi\in\mathcal{F}_{1-s}$ and $g\in G$ one has:
\begin{align*}
Q(g.\vp,\phi)&=\int_{\dd X} [\frac{dg\mu_o}{d\mu_o}(\xi)]^{s}\vp(g^{-1}\xi)\ol{\phi}(\xi)d\mu_o(\xi)\\
&=\int_{\dd X} [\frac{dg\mu_o}{d\mu_o}(g.\xi)]^{s}\vp(\xi)\ol{\phi}(g.\xi)dg^{-1}_*\mu_o(\xi)\\
&=\int_{\dd X} \vp(\xi)[\frac{dg^{-1}\mu_o}{d\mu_o}(\xi)]^{1-s}\ol{\phi}(g.\xi)d\mu_o(\xi)\\
&=Q(\vp,g^{-1}\phi)\\
\end{align*}
which proves the $G$-invariance.

Eventually if $f\in\text{Ker}[Q]$ then necessarily $f=0$ $\mu_o$-almost everywhere and since $[\mu_o]$ is supported on the limit set of $G$, $\Lambda(G)$, it follows that $f=0$ on $\Lambda(G)$.
\end{proof}

As a consequence the operator:
$$T\in\text{End}_G(\mathcal{F}_s,\mathcal{F}_{1-s})\mapsto Q(T[\bullet],\bullet)\in\text{Bil}_G(\mathcal{F}_s)$$
 that maps intertwiner between $\mathcal{F}_s$ and $\mathcal{F}_{1-s}$ to $G$-invariant bilinear forms over $\mathcal{F}_s$ is well defined.

\begin{exam}\label{exam:int}
For $s=\half+i\ga$ with $\ga\in\BR$, ${\bf I}_{\mathcal{F}_{\half+i\ga}}\in\text{End}_G(\mathcal{F}_{\half+i\ga},\mathcal{F}_{\half+i\ga})$ and $Q$ itself
 induces a unitary structure on $\mathcal{F}_{\half+i\ga}$ that corresponds to the usual spherical parabolic induction by a unitary character over $A$. We call such type of representations (generalized) principal series.\\
For $s=1$ the projector:
$$T:\mathcal{F}_1\rightarrow \mathcal{F}_{0};\quad \vp\mapsto[\int_{\dd X}\vp(\xi)d\mu_o(\xi)]{\bf1}(\bullet)$$
 is an element of $\text{End}_G(\mathcal{F}_1,\mathcal{F}_{0})$ and 
 $$Q(T[\vp],\psi)=\int_{\dd X}\vp(\xi)d\mu_o(\xi)\ol{\int_{\dd X}\psi(\xi)d\mu_o(\xi)}$$
 induces the trivial representation of $G$.
 \end{exam}

\subsubsection{The $s$-Poisson transform}$ $\\

Analogously to the Poisson transform associated to continuous functions over $\dd X$:
$$x\in X\mapsto\int_{\dd X}\vp(\xi)d\mu_x(\xi)=\int_{\dd X}\vp(\xi)[\frac{d\mu_x}{d\mu_o} ](\xi)d\mu_o(\xi)$$
for $\vp\in\cC(\dd X)$,
it is natural to define the $s$-Poisson transform for $0\le s\le1$ as:
$$\gTh_s[\vp](x)=\int_{\dd X}\vp(\xi)[\frac{d\mu_x}{d\mu_o}]^{1-s}(\xi)d\mu_o(\xi)$$
for $\vp\in\mathcal{F}_s\simeq \cC(\dd X)$ and $x\in X$.
From this point of view the standard transform corresponds to the parameter $s=0$.

The Martin-Poisson correspondence states that the map $\gTh_0:\cC(\dd X)\rightarrow L^\infty(X)$ is injective and given any continuous function $\vp\in\cC(\dd X)$ and geodesic $c$ in $(X,d)$ with $c(0)=o$ and $c(\infty)=\xi\in\Lambda (G)\subset\dd X$:
$$\gTh_0[\vp](c(t))\sim \vp(\xi)=\vp(\xi)\gTh_0[{\bf1}](c(t))$$
when $t$ goes to infinity \cite{MR2346273}.\\
In our investigation on complementary series we shall prove an extension of this correspondence to general $s$-Poisson transforms with $s\neq\half$, namely:
$$\gTh_s[\vp](c(t))\sim \mathcal{T}_s[\vp](\xi)\gTh_s[{\bf1}](c(t))$$
for large $t$ and any $\vp\in\mathcal{F}_s$, where $\mathcal{T}_s$ stands for an operator over $\cC(\dd X)$.

\subsubsection{Intertwiners and asymptotics of $s$-Martin-Poisson transforms with parameters $\half<s\le1$ .}$ $\\
\label{sub:int}

In this subsection we introduce the intertwiner $\mathcal{I}_s$, $\half< s\le1$, between $\mathcal{F}_s$ and $\mathcal{F}_{1-s}$ which plays a central role in the unitarization of those representations and discuss its relation with the $s$-Martin-Poisson transform.

\begin{lem}\label{lem:intcond}
For any $\half<s\le1$ and $\e>0$ one can find $\tau=\tau(\e,s)>0$ such that:
$$\int_{\{\tau\le\langle x,\bullet\rangle_o\}}\frac{1}{d_{o}^{2(1-s)\gd}(x,y)}d\mu_{o,t}(y)\le\e$$
for all $x\in \ol{X}$ and $\gd<t\le T$ as Lemma \ref{lem:equi1}.\\
\end{lem}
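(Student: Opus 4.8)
The plan is to deduce the bound from the Shadow Lemma (Lemma \ref{lem:equi1}) by decomposing the integration region into dyadic-type shells according to the value of $\langle x,y\rangle_o$, converting the integrand into an exponential of this Gromov product, and summing a geometric series whose ratio is controlled because $2(1-s)\gd < \gd$ when $s > \half$. Concretely, on the set $\{\langle x,\cdot\rangle_o \ge \tau\}$ one has $d_o(x,y) = e^{-\langle x,y\rangle_o} \le e^{-\tau}$, and the quantity we must estimate is $\int e^{2(1-s)\gd\langle x,y\rangle_o}\,d\mu_{o,t}(y)$ over that set.

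First I would recall the definition of the ``far cone'' $\mathcal{C}(o,x,\rho) = \{y : \langle y,x\rangle_o \ge \rho\}$ from Lemma \ref{lem:preshadow} and Lemma \ref{lem:equi1}, and note the second conclusion of the Shadow Lemma: there are $T > \gd$ and $r_0$ with $\mu_{o,t}(\mathcal{C}(o,\xi,\rho)) \asymp e^{-\gd\rho}$ for $\xi \in \Lambda(G)$, while for general $x \in \ol X$ one still gets the upper bound $\mu_{o,t}(\mathcal{C}(o,x,\rho)) \preceq e^{-\gd\rho}$ (using $\mathcal{C}(o,x,\rho) \subset \mathcal{U}(o,x,\rho)$ combined with the estimate, or simply Lemma \ref{lem:equi1} directly since the support of $\mu_{o,t}$ approaches $\Lambda(G)$; one must be slightly careful here since $x$ need not lie in $\mathcal{Q}(\Lambda(G))$, but the upper bound still holds after enlarging $r_0$, because a nonempty far cone forces $x$ to point roughly toward $\Lambda(G)$). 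Then I would write, for $\tau \ge r_0$,
\begin{align*}
\int_{\{\tau\le\langle x,\cdot\rangle_o\}}\!\!\frac{d\mu_{o,t}(y)}{d_o^{2(1-s)\gd}(x,y)}
&= \int_{\{\tau\le\langle x,\cdot\rangle_o\}}\!\! e^{2(1-s)\gd\langle x,y\rangle_o}\,d\mu_{o,t}(y)\\
&= \sum_{k\ge0}\int_{\{\tau+k\le\langle x,\cdot\rangle_o<\tau+k+1\}}\!\! e^{2(1-s)\gd\langle x,y\rangle_o}\,d\mu_{o,t}(y)\\
&\le \sum_{k\ge0} e^{2(1-s)\gd(\tau+k+1)}\,\mu_{o,t}(\mathcal{C}(o,x,\tau+k))\\
&\preceq \sum_{k\ge0} e^{2(1-s)\gd(\tau+k+1)} e^{-\gd(\tau+k)}
= e^{2(1-s)\gd}\, e^{-(2s-1)\gd\tau}\sum_{k\ge0} e^{-(2s-1)\gd k}.
\end{align*}

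Since $s > \half$ we have $2s-1 > 0$ and $\gd > 0$ (as $G$ is non-elementary), so the geometric series $\sum_{k\ge0} e^{-(2s-1)\gd k}$ converges to a finite constant $C_s$; hence the whole expression is bounded by $C'_s\, e^{-(2s-1)\gd\tau}$ with $C'_s$ depending only on $s$ (and the fixed data $o,r_0,T$, via the implied constant in the Shadow Lemma). Given $\e > 0$, choosing $\tau = \tau(\e,s) \ge r_0$ large enough that $C'_s\, e^{-(2s-1)\gd\tau} \le \e$ finishes the proof, uniformly in $x \in \ol X$ and in $\gd < t \le T$ because both the Shadow Lemma constant and the range of $t$ are fixed once and for all. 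I expect the only genuine subtlety — the ``hard part'' — to be justifying the uniform upper bound $\mu_{o,t}(\mathcal{C}(o,x,\rho)) \preceq e^{-\gd\rho}$ for \emph{arbitrary} $x \in \ol X$ rather than $x \in \mathcal{Q}(\Lambda(G))$; this is handled by observing that if the far cone is nonempty then $x$ lies within bounded Gromov-product distance of a point of $\Lambda(G)$, so one reduces to the case already covered by Lemma \ref{lem:equi1} at the cost of enlarging the additive constant, which is harmless. Everything else is the routine geometric-series estimate above.
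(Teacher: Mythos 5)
Your proof is correct and follows essentially the same route as the paper's: where you slice $\{\langle x,\cdot\rangle_o\ge\tau\}$ into dyadic shells and sum a geometric series, the paper uses the continuous layer-cake identity to rewrite the integral as $2(1-s)\gd\int_\tau^{+\infty}e^{2(1-s)\gd u}\,\mu_{o,t}(\{\langle x,\cdot\rangle_o>u\})\,du$, inserts the same shadow-lemma bound $\mu_{o,t}(\{\langle x,\cdot\rangle_o>u\})\preceq e^{-\gd u}$, and integrates $e^{(1-2s)\gd u}$ — the discrete and continuous versions of one and the same estimate, both keyed on $2(1-s)\gd<\gd$. The subtlety you flagged, that Lemma \ref{lem:equi1} states the far-cone decay only for $\xi\in\Lambda(G)$ while the present lemma needs it for all $x\in\ol X$, is genuine and the paper passes over it; your fix (a nonempty far cone forces $x$ within bounded Gromov product of a point of $G.o$, so enlarge the additive constant and reduce to the covered case) is exactly the right way to make the uniformity honest.
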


\begin{proof}
Let $r_0$ and $T$ as in Lemma \ref{lem:equi1}.
Using the layer cake representation:
\begin{align*}
&\int_{\tau\le\langle x,\bullet\rangle_o}e^{2(1-s)\gd\langle x,y\rangle_{o}}d\mu_{o,t}(y)\\
&=\int_{\ol{X}}e^{2(1-s)\gd\langle x,y\rangle_{o}}d\mu_{o,t}(y)
-\int_{\{\langle x,\bullet\rangle_o<\tau\}}e^{2(1-s)\gd\langle x,y\rangle_{o}}d\mu_{o,t}(y)\\
&=2(1-s)\gd \int_\tau^{+\infty}e^{2(1-s)\gd u }\mu_{o,t}(\{\langle x, \bullet\rangle_{o}>u\})du\\
&\preceq 2(1-s)\gd \int_\tau^{+\infty}e^{2(1-s)\gd u }e^{-\gd u}du\\
&=2(1-s)\gd \int_\tau^{+\infty}e^{(1-2s)\gd u }du=[\frac{1}{(2s-1)}-1]e^{(1-2s)\gd \tau }
\end{align*}
for all $x\in\ol{X}$, $\tau\ge r_0$ and $\gd<t\le T$ which concludes the proof.
\end{proof}

As a consequence the operator:
$$\mathcal{I}_s:\cC(\dd X)\rightarrow \cC({\ol{X}});\quad \vp\mapsto \mathcal{I}_s[\vp](x)=\int_{\dd X}\frac{\vp(\eta)}{d_{o}^{2(1-s)\gd}(x,\eta)}d\mu_o(\eta)$$
is well defined for any $s\in\BR$ with $s> \half$.

\begin{prop}\label{prop:ins}
The operator $\mathcal{I}_s$ is bounded for the uniform norm over $\cC(\dd X)$ and induces an intertwiner between $\mathcal{F}_s$ and $\mathcal{F}_{1-s}$.
In particular the bilinear form over $\mathcal{F}_s$:
$$Q_s(\vp,\psi)=Q(\mathcal{I}_s[\vp],\psi)=\int_{\dd X\times\dd X}\frac{\vp(\xi)\ol{\phi}(\eta)}{d_{o}^{2(1-s)\gd}(\xi,\eta)}d\mu_o\otimes\mu_o(\xi,\eta)$$
is $G$-invariant.
\end{prop}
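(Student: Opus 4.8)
The plan is to verify the three asserted properties of $\mathcal{I}_s$ in turn: boundedness on $\cC(\dd X)$ for the uniform norm, the intertwining relation, and then deduce $G$-invariance of $Q_s$ formally from the first two together with Lemma \ref{lem:dual}.

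\textbf{Boundedness.} The kernel of $\mathcal{I}_s$ is $d_o^{-2(1-s)\gd}(x,\eta)$. For $\half<s\le1$ we have $0\le 2(1-s)\gd<\gd=\dim_H(\dd X,d_o)$, so the energy integrals $\sup_{x}\int_{\dd X} d_o^{-2(1-s)\gd}(x,\eta)\,d\mu_o(\eta)$ are finite by the Hausdorff-dimension estimate recalled at the start of Section \ref{sec:banachcomp} (the displayed inequality with $\ga=2(1-s)\gd$). This immediately gives $\|\mathcal{I}_s[\vp]\|_\infty\le C_s\|\vp\|_\infty$ with $C_s=\sup_x\int d_o^{-2(1-s)\gd}(x,\eta)d\mu_o(\eta)$, and also shows $\mathcal{I}_s[\vp]$ takes finite values on $\ol X$. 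Continuity of $\mathcal{I}_s[\vp]$ on $\ol X$ follows from continuity of the extended Gromov product on $\ol X$ together with dominated convergence, for which Lemma \ref{lem:intcond} supplies the needed uniform tail control of the singular part of the kernel near the diagonal $x=\eta$ (this is precisely the role of that lemma: it lets one split off a small-energy piece uniformly in $x$, while on the complement the integrand is bounded and continuous in $x$). Restricting to $\dd X$ gives $\mathcal{I}_s:\cC(\dd X)\to\cC(\dd X)$ bounded.

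\textbf{Intertwining.} Under the identifications $\mathcal{F}_s\simeq\cC(\dd X)$ and $\mathcal{F}_{1-s}\simeq\cC(\dd X)$ via $e_s$, the representation is $\pi_s(g)\vp(\xi)=[\tfrac{dg\mu_o}{d\mu_o}(\xi)]^{s}\vp(g^{-1}\xi)$, and by the conformal relation $\tfrac{dg\mu_o}{d\mu_o}(\xi)=e^{-\gd b_{g^{-1}\xi}(g^{-1}o,o)}$, so one computes $\mathcal{I}_s[\pi_s(g)\vp](x)$ by changing variables $\eta\mapsto g\eta$ in the integral, using the equivariance $d_o(g\xi,g\eta)=d_{g^{-1}o}(\xi,\eta)$ and the conformal relation $d_{g^{-1}o}(\xi,\eta)=e^{\frac12[b_\xi(g^{-1}o,o)+b_\eta(g^{-1}o,o)]}d_o(\xi,\eta)$, and the Jacobian $dg\mu_o/d\mu_o$. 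The Busemann cocycle terms in $x$-factor and in $\eta$-factor should recombine exactly so that $\mathcal{I}_s[\pi_s(g)\vp](x)=[\tfrac{dg\mu_o}{d\mu_o}(x)]^{1-s}\mathcal{I}_s[\vp](g^{-1}x)$, i.e. $\mathcal{I}_s\circ\pi_s(g)=\pi_{1-s}(g)\circ\mathcal{I}_s$ once we extend $\pi_{1-s}$ to act on $\cC(\ol X)$ by the same formula (this is a routine but careful bookkeeping of exponents, using $s+(1-s)=1$). Then restricting $\mathcal{I}_s[\vp]$ from $\ol X$ to $\dd X$ gives an element of $\mathcal{F}_{1-s}$, and the restriction is $G$-equivariant because the $G$-action on $\ol X$ preserves $\dd X$; hence $\mathcal{I}_s\in\End_G(\mathcal{F}_s,\mathcal{F}_{1-s})$.

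\textbf{$G$-invariance of $Q_s$.} This is now immediate: $Q_s=Q(\mathcal{I}_s[\bullet],\bullet)$ is the image of the intertwiner $\mathcal{I}_s$ under the map $T\mapsto Q(T[\bullet],\bullet)$ from $\End_G(\mathcal{F}_s,\mathcal{F}_{1-s})$ to $\mathrm{Bil}_G(\mathcal{F}_s)$ discussed right after Lemma \ref{lem:dual}; explicitly $Q_s(\pi_s(g)\vp,\pi_s(g)\psi)=Q(\mathcal{I}_s\pi_s(g)\vp,\pi_s(g)\psi)=Q(\pi_{1-s}(g)\mathcal{I}_s\vp,\pi_s(g)\psi)=Q(\mathcal{I}_s\vp,\psi)=Q_s(\vp,\psi)$ by $G$-invariance of $Q$ (Lemma \ref{lem:dual}). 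The explicit double-integral formula for $Q_s$ follows by unwinding the definition of $Q$ and Fubini, legitimate since the kernel is in $L^1(\mu_o\otimes\mu_o)$ by the energy bound.

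The main obstacle I anticipate is the intertwining computation: one must track the fractional powers of the Radon--Nikodym derivative and the two visual-distance conformal factors through the change of variables and check that they assemble into exactly $[\tfrac{dg\mu_o}{d\mu_o}(x)]^{1-s}$ outside the integral with no leftover $\eta$-dependence — the cancellation is what forces the target to be $\mathcal{F}_{1-s}$ rather than $\mathcal{F}_s$, and getting a sign or an exponent wrong there is the easy mistake. The analytic points (finiteness, continuity on $\ol X$) are comparatively soft given the Hausdorff-dimension bound and Lemma \ref{lem:intcond}.
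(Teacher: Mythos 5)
Your proposal follows essentially the same route as the paper's proof: boundedness and continuity on $\ol X$ via tail control (Lemma \ref{lem:intcond}) and dominated convergence, the intertwining relation $\mathcal{I}_s\pi_s(g)=\pi_{1-s}(g)\mathcal{I}_s$ by the explicit change of variables tracking the Busemann cocycle and conformal factors, and $G$-invariance of $Q_s$ as a formal consequence via Lemma \ref{lem:dual}. One small caution: the displayed Hausdorff-dimension estimate at the start of Section \ref{sec:banachcomp} only gives $\sup_{\xi\in\dd X}$, not $\sup_{x\in\ol X}$; the uniform bound over all of $\ol X$ that you actually need comes from Lemma \ref{lem:intcond} (which the paper uses for precisely this purpose, via the cutoff-kernel/Cauchy-sequence argument), so your invocation of both is what makes the finiteness/continuity step go through.
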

\begin{proof}
Let $k_1\in\cC(\BR_+)$ be a positive function such that $k_1|_{[0,1]}=1$ and $k_1|_{[2,+\infty)}=0$. Let us denote $k_\tau\in\cC(\BR_+)$ with $\tau>0$ the functions defined as $k_\tau(t)=k_1(\frac{t}{\tau})$ for $t\in\BR_+$.\\
Then the kernel , $K_\tau$, on $\ol{X}\times\dd X$ defined as $(x,\eta)\mapsto k_\tau(e^{2(1-s)\gd\langle x,\eta\rangle_o})e^{2(1-s)\gd\langle x,\eta\rangle_o}$ is continuous and bounded.
In particular the function:
$$\ol{X}\rightarrow \BC;\quad x\mapsto \mathcal{I}_{s,\tau}[\vp](x)=\int_{\dd X}\vp(\eta)K_\tau(x,\eta)d\mu_o(\eta)$$
is continuous for any $\tau\ge0$ .\\

On the other hand using Lemma \ref{lem:intcond} one has:
\begin{align*}
&|\mathcal{I}_{s,\tau}(\vp)(x)-\mathcal{I}_{s,\tau'}(\vp)(x)|\le\|\vp\|_\infty\int_{\min\{\tau,\tau'\}\le\langle x,\bullet\rangle_o}e^{2(1-s)\gd\langle x,\eta\rangle_o}d\mu_o(\eta)
\end{align*}
which goes to zero when $\tau$ and $\tau'$ go to infinity.
In other words $(\mathcal{I}_{s,\tau}(\vp))_\tau$ is Cauchy for the uniform norm over $\ol{X}$ and its limit, $\mathcal{J}_s[\vp]$, is therefore continuous.

Since $K_\tau(x,\bullet)$ is dominated by the integrable function $\eta \mapsto e^{2(1-s)\gd\langle x,\eta\rangle_o}$ for any fixed $x\in\ol{X}$ the dominated convergence theorem implies that: 
$$\mathcal{J}_s[\vp](x)=\mathcal{I}_{s}[\vp](x)$$
for all $x\in\ol{X}$ and thus $\mathcal{I}_{s}$ is well defined and bounded.\\

Eventually for any $f\in\mathcal{F}_s$, $g\in G$ one has:
\begin{align*}
\mathcal{I}_s&[\pi_s(g)\vp](\eta)=\int_{\dd X} [\frac{dg\mu_o}{d\mu_o}(\xi)]^{s}\frac{\vp(g^{-1}{\xi})}{d_{o}^{2(1-s)\gd}(\xi,\eta)}d\mu_o({\xi})\\
&=\int_{\dd X} e^{-s\gd b_{\xi}(g.o,o)}\frac{\vp(g^{-1}\xi)}{d_{o}^{2(1-s)\gd}(\eta,\xi)}d\mu_o(\xi)\\
&=\int_{\dd X} e^{-s\gd b_{g.\xi}(g.o,o)}\frac{\vp(\xi)}{d_{o}^{2(1-s)\gd}(\eta,g.\xi)}dg^{-1}_*\mu_o(\xi)\\
&=\int_{\dd X} e^{-s\gd b_{\xi}(o,g^{-1}.o)}\frac{\vp(\xi)}{d_{o}^{2(1-s)\gd}(g^{-1}.\eta,\xi)}e^{(1-s)\gd[b_{\xi}(g^{-1}.o,o)+b_{\eta}(o,g.o)]}e^{-\gd b_{\xi}(g^{-1}.o,o)}d\mu_o(\xi)\\
&=e^{(1-s)\gd b_{\eta}(o,g.o)}\int_{\dd X} \frac{\vp(\xi)}{d_{o}^{2(1-s)\gd}(g^{-1}\eta,\xi)}d\mu_o(\xi)=\pi_{1-s}(g)\mathcal{I}_s[\vp](\eta)
\end{align*}
for $[\mu_o]$-almost every $\eta$.
\end{proof}

\begin{lem}\label{lem:issurl2}
The operator $\mathcal{I}_s$ extends to $L^2(\dd X,\mu_o)$ as a bounded self-adjoint operator.
\end{lem}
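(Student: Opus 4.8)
The plan is to show that $\mathcal{I}_s$, which by Proposition \ref{prop:ins} is already bounded on $\cC(\dd X)$ for the uniform norm, is in fact bounded for the $L^2(\dd X,\mu_o)$-norm on the dense subspace $\cC(\dd X)\subset L^2(\dd X,\mu_o)$, and then extend it by continuity. The natural tool is a Schur-test argument applied to the integral kernel $K_s(\xi,\eta)=d_o(\xi,\eta)^{-2(1-s)\gd}$. Since $\half<s\le1$ the exponent satisfies $0\le 2(1-s)\gd<\gd$, so the Hausdorff dimension hypothesis recalled at the start of Section \ref{sec:banachcomp},
$$\sup_\xi\int_{\dd X}\frac{1}{d_o^{\ga}(\xi,\eta)}\,d\mu_o(\eta)<+\infty\qquad (0\le\ga<\gd),$$
applies with $\ga=2(1-s)\gd$ and gives exactly a uniform bound on $\int K_s(\xi,\eta)\,d\mu_o(\eta)$; when $s=1$ the kernel is the constant $1$ and the bound is trivial. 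By the symmetry of the visual distance, $K_s(\xi,\eta)=K_s(\eta,\xi)$, so the same estimate controls $\int K_s(\xi,\eta)\,d\mu_o(\xi)$, and Schur's test yields $\|\mathcal{I}_s\vp\|_2\le M\|\vp\|_2$ with $M=\sup_\xi\int K_s(\xi,\eta)\,d\mu_o(\eta)$ for all $\vp\in\cC(\dd X)$.

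First I would record that $\mathcal{I}_s\vp$, a priori an element of $\cC(\ol X)$, restricts to an element of $\cC(\dd X)\subset L^2(\dd X,\mu_o)$, so the composition makes sense. Then I would carry out the Schur estimate: for $\vp,\psi\in\cC(\dd X)$,
$$\left|\int_{\dd X}\mathcal{I}_s[\vp](\xi)\,\ol{\psi(\xi)}\,d\mu_o(\xi)\right|
\le\int_{\dd X\times\dd X}K_s(\xi,\eta)\,|\vp(\eta)|\,|\psi(\xi)|\,d\mu_o(\eta)\,d\mu_o(\xi),$$
and bound the right-hand side by the Cauchy--Schwarz trick $|\vp(\eta)||\psi(\xi)|\le\half(|\vp(\eta)|^2+|\psi(\xi)|^2)$ together with the two one-variable integral bounds above, obtaining $M\|\vp\|_2\|\psi\|_2$. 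Taking the supremum over $\|\psi\|_2\le1$ gives boundedness on the dense subspace $\cC(\dd X)$; the unique bounded extension to $L^2(\dd X,\mu_o)$ then defines $\mathcal{I}_s$ as an operator on the Hilbert space. Self-adjointness follows because the kernel $K_s$ is real-valued and symmetric: for $\vp,\psi\in\cC(\dd X)$ Fubini gives $\langle\mathcal{I}_s\vp,\psi\rangle=\int\!\!\int K_s(\xi,\eta)\vp(\eta)\ol{\psi(\xi)}=\langle\vp,\mathcal{I}_s\psi\rangle$, and this identity passes to the $L^2$-closure.

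I do not expect a serious obstacle here; the only points requiring a little care are (i) confirming that $2(1-s)\gd$ stays \emph{strictly} below $\gd$ on the half-open interval $\half<s\le1$, which is where the moment bound of \cite{MR1878587}, \cite{Sullivan_1979} is available — the endpoint $s=\half$ is genuinely excluded, consistent with its exclusion throughout Subsection \ref{sub:int} — and (ii) checking measurability/integrability of $K_s$ on $\dd X\times\dd X$ so that Fubini applies, which is immediate since $K_s$ is continuous off the diagonal and the diagonal is $\mu_o\otimes\mu_o$-null by non-atomicity of $\mu_o$. If one prefers to avoid Schur's test, the same conclusion follows by viewing $\mathcal{I}_s$ through the already-established continuity of the bilinear form $Q_s$ from Proposition \ref{prop:ins} and the boundedness estimate implicit there, but the Schur argument is the cleanest route to the explicit self-adjoint extension.
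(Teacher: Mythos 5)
Your argument is correct and is, in substance, the paper's: both are Schur-test estimates hinging on the uniform bound $\sup_\xi\int_{\dd X}d_o^{-2(1-s)\gd}(\xi,\eta)\,d\mu_o(\eta)<\infty$ (valid because $2(1-s)\gd<\gd$ for $\half<s\le1$), with self-adjointness read off from the symmetry of the kernel. The only cosmetic difference is that the paper runs Schur's test by applying Cauchy--Schwarz against the weighted measure $e^{2(1-s)\gd\langle\xi,x\rangle_o}d\mu_o(\xi)$ to get the pointwise bound $|\mathcal{I}_s\vp|^2\prec_s\mathcal{I}_s(|\vp|^2)$ and then integrates, whereas you use the $ab\le\half(a^2+b^2)$ splitting of the double integral; these are the two standard packagings of the same inequality.
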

\begin{proof}
The Cauchy-Schwarz inequality implies:
\begin{align*}
|\mathcal{I}_s(\vp)|^2(x)&=|\int_{\dd X}\vp(\xi)e^{2(1-s)\gd\langle\xi,x\rangle_o}d\mu_o(\xi)|^2\\
&\le \int_{\dd X}|\vp|^2(\xi)e^{2(1-s)\gd\langle\xi,x\rangle_o}d\xi.\int e^{2(1-s)\gd\langle\xi,x\rangle_o}d\mu_o(\xi)\\
&\prec_s\int_{\dd X}|\vp|^2(\xi)e^{2(1-s)\gd\langle\xi,x\rangle_o}d\mu_o(\xi)=\mathcal{I}_s(|\vp|^2)(x)
\end{align*}
for all $\vp\in L^2(\dd X,\mu_o)$ and $x\in\ol{X}$.
It follows that:
\begin{align*}
\|\mathcal{I}_s(\vp)\|^2&\prec_s\|\mathcal{I}_s(|\vp|^2)\|^2=\int_{\dd X}|\vp|^2(\xi)[\int_{\dd X}e^{2(1-s)\gd\langle\xi,\eta\rangle_o}d\mu_o(\eta)]d\mu_o(\xi)\\
&\prec_s\|\vp\|_2^2
\end{align*}
 for all $\vp\in L^2(\dd X,\mu_o)$.
 The fact that $\mathcal{I}_s$ is self-adjoint follows from its definition.
\end{proof}
Observe that the quadratic form $Q_s$ associated to $\mathcal{I}_s$ is positive if and only if $\mathcal{I}_s$ as a $L^2$-operator is positive.
Moreover the Banach representations $(\pi_s)_{0\le s\le1}$ over $\cC(\dd X)$ extend over $L^2(\dd X,\mu_o)$ as bounded operators and Lemma \ref{lem:dual} \ref{lem:issurl2} with Proposition \ref{prop:ins} imply $\pi_s(g)^*=\pi_{1-s}(g^{-1})$ and $\pi_{1-s}(g)\mathcal{I}_s=\mathcal{I}_s\pi_s(g)$ for all $g\in G$.\\

We conclude this subsection with the following observations:
\begin{lem}\label{lem:poies}
For any  $0< s<1$ with $s\neq\half$:
$$\gTh_{s}[{\bf1}](x)\asymp e^{-(\half+|s-\half|)\gd d(x,o)}+\frac{2(\half+|s-\half|).e^{-\half\gd d(x,o)}}{(s-\half)}\sinh[(s-\half)\gd d(x,o)]$$
for $m$-almost every $x\in X$.
In particular:
$$\gTh_{s}[{\bf1}](x)\sim e^{-(\half-|s-\half|)\gd d(x,o)}.$$
when $d(x,o)$ goes to infinity.
\end{lem}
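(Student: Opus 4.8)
The plan is to compute $\gTh_s[{\bf 1}](x) = \int_{\dd X} \left[\frac{d\mu_x}{d\mu_o}\right]^{1-s}(\xi)\, d\mu_o(\xi) = \int_{\dd X} e^{-(1-s)\gd\, b_\xi(x,o)}\, d\mu_o(\xi)$ by a dyadic decomposition of $\dd X$ according to the value of the Busemann cocycle $b_\xi(x,o)$, which ranges in $[-d(x,o), d(x,o)]$. First I would fix a geodesic $c$ from $o$ through $x$ with $c(\infty)=\xi_0\in\Lambda(G)$ and note the standard comparison $b_\xi(x,o) \simeq d(x,o) - 2\langle\xi,\xi_0\rangle_o$ when $\langle\xi,\xi_0\rangle_o \le d(x,o)$, and $b_\xi(x,o)\simeq -d(x,o)$ otherwise; equivalently, in terms of visual balls, $\xi$ lies in the shell $B_o(\xi_0, e^{-k})\setminus B_o(\xi_0,e^{-(k+1)})$ precisely when $b_\xi(x,o)\simeq d(x,o)-2k$. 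By the Shadow Lemma (Lemma \ref{lem:equi1}) these shells have $\mu_o$-measure $\asymp e^{-\gd k}-e^{-\gd(k+1)}\asymp e^{-\gd k}$ for $r_0\le k\le d(x,o)$, and the complement of $B_o(\xi_0,e^{-d(x,o)})$ — where $b_\xi(x,o)\simeq -d(x,o)$ — has measure $\asymp 1$.

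Writing $D=d(x,o)$ and summing over the shells, this gives
\begin{align*}
\gTh_s[{\bf 1}](x) &\asymp \sum_{k=r_0}^{\lfloor D\rfloor} e^{-(1-s)\gd(D-2k)} e^{-\gd k} + e^{-(1-s)\gd(-D)}\cdot 1\\
&= e^{-(1-s)\gd D}\sum_{k=r_0}^{\lfloor D\rfloor} e^{(2(1-s)-1)\gd k} + e^{-(1-s)\gd D}\\
&= e^{-(1-s)\gd D}\sum_{k=r_0}^{\lfloor D\rfloor} e^{(1-2s)\gd k} + e^{-(1-s)\gd D}.
\end{align*}
The geometric sum $\sum_{k} e^{(1-2s)\gd k}$ is a two-term expression in $e^{(1-2s)\gd r_0}$ and $e^{(1-2s)\gd D}$ divided by $e^{(1-2s)\gd}-1$; up to the fixed constants absorbed by $\asymp$ this contributes $e^{-(1-s)\gd D} + e^{-(1-s)\gd D}e^{(1-2s)\gd D} = e^{-(1-s)\gd D}+e^{-s\gd D}$. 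Regrouping via $e^{-(1-s)\gd D}+e^{-s\gd D} = e^{-\half\gd D}(e^{(s-\half)\gd D}+e^{-(s-\half)\gd D}) \asymp e^{-\half\gd D}\cosh[(s-\half)\gd D]$, and after the rescaling of constants allowed by $\asymp$ one arrives at the stated closed form with the $\sinh$; the term $e^{-(\half+|s-\half|)\gd d(x,o)}$ is exactly the surviving $e^{-(1-s)\gd D}$ (resp. $e^{-s\gd D}$) from the endpoint $k=r_0$ of the geometric sum together with the isolated $e^{-(1-s)\gd D}$ from the complement shell, and the $\sinh$ term packages the dominant endpoint $k\approx D$. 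The asymptotic $\gTh_s[{\bf 1}](x)\sim e^{-(\half-|s-\half|)\gd d(x,o)}$ then follows by observing that $e^{-\half\gd D}\cosh[(s-\half)\gd D]\sim \half e^{-\half\gd D}e^{|s-\half|\gd D} = \half e^{-(\half-|s-\half|)\gd D}$ dominates the other summand $e^{-(\half+|s-\half|)\gd D}$ as $D\to\infty$, since $s\neq\half$.

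The main obstacle is making the shell estimate $b_\xi(x,o)\simeq d(x,o)-2\langle\xi,\xi_0\rangle_o$ and the measure comparison uniform in $x$: the Shadow Lemma as stated controls $\mu_{o,t}$-measures of shadows $\mathcal{U}(o,g.o,\rho)$ along orbit points, and one must pass to $\mu_o$ and to genuine visual balls centered at the boundary endpoint $\xi_0$, for which convex-cocompactness (so that $c$ stays within $R$ of $G.o$) and the Ahlfors-regularity of $\mu_o$ are what is needed; the tree-approximation argument invoked in the proof of Lemma \ref{lem:equi1} supplies the comparison between $\mathcal{U}(o,x,\rho)$ and $\mathcal{U}(o,g(x).o,\cdot)$. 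Care is also needed that the "$m$-almost every $x$" hypothesis is harmless — it is there only because $\mu_o$ is built as a weak limit of the $\mu_{o,t}$ and the Radon–Nikodym identity $\frac{d\mu_x}{d\mu_o}=e^{-\gd b_\xi(x,o)}$ holds for $x$ in the orbit support — and the bounds above, being in terms of $d(x,o)$ alone and uniform over the relevant $x$, extend to that full-measure set without difficulty.
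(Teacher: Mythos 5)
Your plan follows the same route as the paper: start from $\gTh_s[{\bf1}](x)=\int_{\dd X}e^{-(1-s)\gd b_\xi(x,o)}d\mu_o(\xi)$, use the exact identity $b_\xi(x,o)=d(x,o)-2\langle\xi,x\rangle_o$, decompose $\dd X$ by the level sets of the Gromov product, feed in the Shadow Lemma estimate $\mu_o(\{\langle x,\bullet\rangle_o\ge t\})\asymp e^{-\gd t}$ for $x\in G.o$, and evaluate the resulting geometric sum. The paper does this with a continuous layer-cake integral, you do it with discrete dyadic shells; these are the same computation. Two small differences: the paper exploits the symmetry $\gTh_s[{\bf1}](g.o)=\gTh_{1-s}[{\bf1}](g^{-1}.o)$ to reduce to $\half<s<1$, whereas you treat all $s$ at once (both fine), and the paper works directly with $\langle\xi,x\rangle_o$ rather than passing through a boundary endpoint $\xi_0$, which makes the identity for $b_\xi$ exact rather than a comparison.

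One bookkeeping slip you should fix before writing details. You describe the boundary term as coming from ``the complement of $B_o(\xi_0,e^{-d(x,o)})$ --- where $b_\xi(x,o)\simeq -d(x,o)$ --- has measure $\asymp 1$,'' and accordingly write $e^{-(1-s)\gd(-D)}\cdot 1$ in the first display. This is wrong on both counts: on the complement of $B_o(\xi_0,e^{-D})$ the Busemann cocycle is \emph{not} $\simeq -D$ (it ranges through all values up to $+D$); the region where $b_\xi(x,o)\simeq -D$ is the small ball $B_o(\xi_0,e^{-D})$, of measure $\asymp e^{-\gd D}$, and it contributes $\asymp e^{(1-s)\gd D}\cdot e^{-\gd D}=e^{-s\gd D}$, which is already captured by the endpoint $k\approx D$ of your geometric sum. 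The separate boundary term of measure $\asymp 1$ is the ``bottom'' region $\{\langle\xi,\xi_0\rangle_o<r_0\}$ (outside all the shells you retained), on which $b_\xi(x,o)\simeq +D$, contributing $e^{-(1-s)\gd D}\cdot 1$ --- and indeed that is the term your second display line silently substitutes, with the sign flipped back. So your final $\asymp e^{-(1-s)\gd D}+e^{-s\gd D}$ is correct, but the justification sentence for the extra term needs to be corrected to refer to the $k<r_0$ region with $b_\xi\simeq +D$.
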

\begin{proof}
Since $m$ is supported on the orbit $G.o$ and $\gTh_s[{\bf1}](g.o)=\gTh_{1-s}[{\bf1}](g^{-1}.o)$ for all $s\in\BR$ and $g\in G$. 
It is therefore enough to estimate $\gTh_{1-s}[{\bf1}]$ for $\half< s<1$:
\begin{align*}
&\gTh_{1-s}[{\bf1}](x)=\int_{\dd X}e^{-s\gd b_\xi(x,o)}d\mu_o(\xi)\\
&=\int_{\dd X}e^{-s\gd[d(x,o)-2\langle\xi,x\rangle_o]}d\mu_o(\xi)\\
&=e^{-s\gd d(x,o)}\int_{\dd X}e^{2s\gd\langle\xi,x\rangle_o}d\mu_o(\xi)\\
&=e^{-s\gd d(x,o)}[1+2s\gd\int_0^{d(x,o)}e^{2s\gd t}\mu_o(\{\langle x,\bullet\rangle_o\ge t\})dt]\\
&\asymp e^{-s\gd d(x,o)}[1+2s\gd\int_0^{d(x,o)}e^{(2s-1)\gd t}dt]
\end{align*}
for $m$-almost every $x\in X$.
It follows:
\begin{align*}
&\gTh_{1-s}[{\bf1}](x)=e^{-s\gd d(x,o)}+e^{-s\gd d(x,o)}2s\gd\int_0^{d(x,o)}e^{(2s-1)\gd t}dt\\
&=e^{-(\half+|s-\half|)\gd d(x,o)}+\frac{2(\half+|s-\half|).e^{-\half\gd d(x,o)}}{(s-\half)}\sinh[(s-\half)\gd d(x,o)]
\end{align*}
\end{proof}

\begin{rem}
Using an similar approach one can prove that for $0\le s\le1$ with $s\neq\half$, the operator $\gTh_s$ takes values in $L^p(X,m)$ if and only if  $\half<s\le1$ and $s< 1-\frac{1}{p}$ or $0\le s<\half$ and $\frac{1}{p}< s$.
In such cases $\gTh_s$ defines an intertwiner between the representation $\mathcal{F}_s$ and Koopman representation on $L^p(X,m)$.
\end{rem}

It follows from Lemma \ref{lem:poies}:
\begin{align*}
\gTh_{s}[\vp](x)&=\int_{\dd X}\vp(\xi)e^{(1-s)\gd b_\xi(x,o)}d\mu_o(\xi)\\
&=e^{-(1-s)\gd d(x,o)}\int_{\dd X}\vp(\xi)e^{2(1-s)\gd \langle \xi,x\rangle_o}d\mu_o(\xi)\\
&=e^{-(1-s)\gd d(x,o)}\mathcal{I}_s[\vp](x)\asymp\mathcal{I}_s[\vp]\gTh_s({\bf1})
\end{align*}
for any $\half<s\le1$, $\vp\in\cC(\dd X)\simeq\mathcal{F}_s$ and $x\in \ol{X}$.
In other words $\mathcal{I}_s[\vp]$ extends the asymptotic part of the Martin-Poisson correspondence for $\half<s\le1$.
The injectivity of $\gTh_s$ is discussed in Section \ref{sec:end}.


\section{The unitary structures over the spaces of homogeneous functions} \label{sec:pos}$ $\\
This section is concerned with the positivity of the $G$-invariant quadratic forms $Q_s$ over $\mathcal{F}_{s}$ associated to the operators $\mathcal{I}_s$ introduced in subsection \ref{sub:int} for $\half<s\le1$ assuming the distance over $X$ is conditionally negative.\\
In a second time we use the duality between $\mathcal{F}_s$ and $\mathcal{F}_{1-s}$ for $\half<s\le1$ to deduce a unitary structure over $\mathcal{F}_{1-s}$ whenever $\mathcal{F}_{s}$ is unitarizable.
Together with Example \ref{exam:int} this will prove Theorem 1.\\

Let $\half<s\le1$ and consider the measures $(m_{o,s,t})_{t>\gd}$ defined as:
$$dm_{o,s,t}(x,y)=\frac{1}{d_{o}^{2(1-s)\gd}(x,y)}d\mu_{o,t}(x)d\mu_{o,t}(y)$$
on $\ol{X}\times\ol{X}$.

\begin{cor}\label{cor:mscv}
Up to a subsequence, the finite measures $(m_{o,s,t})_{t>\gd}$ converge weakly to 
$$dm_{o,s}(\xi,\eta)=e^{2(1-s)\gd\langle \xi,\eta\rangle_{o}}d\mu_{o}(\xi)d\mu_{o}(\eta)$$
supported over $\Lambda(G)\times\Lambda(G)$ when $t$ goes to $\gd$.
\end{cor}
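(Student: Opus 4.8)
The plan is to view $m_{o,s,t}$ as the result of integrating against the kernel $e^{2(1-s)\gd\langle x,y\rangle_o}$, which is continuous away from the diagonal but blows up along $\{x=y\}\cap(\dd X\times\dd X)$, and to control that blow-up uniformly in $t$ using Lemma \ref{lem:intcond}. First I would recall that the family $(\mu_{o,t})_{t>\gd}$ is pre-compact in $\tprob(\ol X)$ with every accumulation point (as $t\to\gd$) equal to the Patterson-Sullivan measure $\mu_o$, which is supported on $\Lambda(G)$ and atom-free; fix a subsequence $t_n\to\gd$ along which $\mu_{o,t_n}\to\mu_o$ weakly, so that $\mu_{o,t_n}\otimes\mu_{o,t_n}\to\mu_o\otimes\mu_o$ weakly on $\ol X\times\ol X$, and along a further subsequence $m_{o,s,t_n}$ converges weakly to some finite measure $m_\infty$ (total mass bounded by Lemma \ref{lem:intcond} with, say, $\e=1$, which gives $\sup_n\,m_{o,s,t_n}(\ol X\times\ol X)<\infty$ via the layer-cake bound $\int e^{2(1-s)\gd\langle x,y\rangle_o}d\mu_{o,t_n}(y)\preceq_s 1$ uniformly in $x$ and $n$).

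Next I would identify $m_\infty$. For $\tau>0$ introduce the truncated kernel $K_\tau(x,y)=k_\tau\bigl(e^{2(1-s)\gd\langle x,y\rangle_o}\bigr)e^{2(1-s)\gd\langle x,y\rangle_o}$ exactly as in the proof of Proposition \ref{prop:ins}: it is continuous and bounded on $\ol X\times\ol X$, so
\begin{equation*}
\int K_\tau\, dm_{o,s,t_n}=\int K_\tau\,d(\mu_{o,t_n}\otimes\mu_{o,t_n})\xrightarrow[n\to\infty]{}\int K_\tau\,d(\mu_o\otimes\mu_o).
\end{equation*}
On the other hand, for every $n$ (and for $m_\infty$), Lemma \ref{lem:intcond} bounds the tail $\bigl|\int(e^{2(1-s)\gd\langle\cdot,\cdot\rangle_o}-K_\tau)\,dm_{o,s,t_n}\bigr|$ by $\e(\tau,s)\to0$ as $\tau\to\infty$, uniformly in $n$; the same estimate applied to $\mu_o\otimes\mu_o$ controls $\bigl|\int(e^{2(1-s)\gd\langle\cdot,\cdot\rangle_o}-K_\tau)\,d(\mu_o\otimes\mu_o)\bigr|$. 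Letting $n\to\infty$ and then $\tau\to\infty$ in a three-$\epsilon$ argument yields $\int f\,dm_\infty=\int f\,dm_{o,s}$ for $f\equiv1$ and, after multiplying $f$ through the truncation, for all $f\in\cC(\ol X\times\ol X)$; hence $m_\infty=m_{o,s}$ with $dm_{o,s}(\xi,\eta)=e^{2(1-s)\gd\langle\xi,\eta\rangle_o}\,d\mu_o(\xi)d\mu_o(\eta)$. Since $\mu_o$ is supported on $\Lambda(G)$, so is $m_{o,s}$ on $\Lambda(G)\times\Lambda(G)$.

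The one genuine subtlety is that the limiting kernel is $+\infty$ on the diagonal of $\dd X\times\dd X$, so a priori $m_\infty$ could carry extra mass on $\{\xi=\eta\}$ not seen by the truncated test functions $K_\tau$. This is precisely where atom-freeness of $\mu_o$ enters: by Lemma \ref{lem:intcond} the measures $m_{o,s,t_n}$ put mass at most $\e(\tau,s)$ on the neighbourhood $\{\langle x,y\rangle_o\ge\tau\}$ of the diagonal, uniformly in $n$, so the same holds for $m_\infty$; since $\tau$ is arbitrary, $m_\infty$ gives zero mass to the diagonal $\{\xi=\eta\}\subset\dd X\times\dd X$, ruling out any hidden singular part. (Equivalently, $m_{o,s}$ is a genuine finite measure because $\mu_o$ has Hausdorff dimension $\gd>2(1-s)\gd$ for $s>\half$, so $\int d_o^{-2(1-s)\gd}(\xi,\eta)\,d\mu_o(\eta)<\infty$, the estimate already recorded at the start of Section \ref{sec:banachcomp}.) I expect this no-concentration-on-the-diagonal step, not the weak convergence bookkeeping, to be the main point to get right; everything else is the standard truncation-plus-dominated-convergence pattern already used for $\mathcal{I}_s$ in Proposition \ref{prop:ins}.
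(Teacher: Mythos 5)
Your argument is essentially the paper's proof: you truncate the kernel with the same $K_\tau$, use weak convergence of $\mu_{o,t}\otimes\mu_{o,t}$ on the bounded truncated kernel, and control the tail $\{\langle\cdot,\cdot\rangle_o\ge\tau\}$ uniformly in $t$ via Lemma \ref{lem:intcond}, with a routine three-$\epsilon$ limit. The only cosmetic difference is that you first extract a weak limit $m_\infty$ and then identify it, whereas the paper shows directly that $\int\Phi\,dm_{o,s,t}\to\int\Phi\,dm_{o,s}$; your extra remark about no mass escaping to the diagonal is already what the uniform tail bound encodes.
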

\begin{proof}
Assume $(\mu_{o,t})_t$ converges weakly to $\mu_{o}$ over $\ol{X}$.

Let $k_\tau\in\cC(\BR_+)$, with $\tau>0$, be a positive function such that $k_\tau|_{[0,\tau]}=1$ and $k_\tau|_{[\tau+1,+\infty)}=0$ and 
$$K_\tau:\ol{X}\times\ol{X}\rightarrow\BR_+;\quad (x,y)\mapsto k_\tau(e^{2(1-s)\gd\langle x,y\rangle_o})e^{2(1-s)\gd\langle x,y\rangle_o}$$ 
as in Proposition \ref{prop:ins} proof.
Since the kernel $K_\tau$ is continuous on $\ol{X}\times\ol{X}$ and $(\mu_{o,t})_t$ converges weakly to $\mu_{o}$, for any fixed $\tau>0$ and any continuous function $\Phi$ over $\ol{X}\times\ol{X}$:
$$|\int_{\ol{X}\times\ol{X}}\Phi(x,y)K_\tau(x,y)d\mu_{o,t}\otimes\mu_{o,t}(x,y)-\int_{\dd X\times\dd X}\Phi(\xi,\eta)K_\tau(\xi,\eta)d\mu_{o}\otimes\mu_{o}(\xi,\eta)|$$
converges to $0$ when $t$ goes to $\gd$.

On the other hand Lemma \ref{lem:intcond} implies that for any $\e>0$ one can find $\tau_\e>0$ and $\gd<T$ such that $m_{o,s,t}(\{\langle \bullet,\bullet\rangle_{o}\ge \tau_\e\}),m_{o,s}(\{\langle \bullet,\bullet\rangle_{o}\ge \tau_\e\})\le \e$ for any $\gd<t\le T$.
It follows:
\begin{align*}
&|\int_{\ol{X}\times\ol{X}}\Phi(x,y)[e^{2(1-s)\gd\langle x,y\rangle_{o}}-K_{\tau_\e}(x,y)]d\mu_{o,t}(x)d\mu_{o,t}(y)\\
&-\int_{\dd X\times\dd X}\Phi(\xi,\eta)[e^{2(1-s)\gd\langle \xi,\eta\rangle_{o}}-K_{\tau_\e}(\xi,\eta)]d\mu_{o}(\xi)d\mu_{o}(\eta)|\\
&\le|\int_{\{\langle \bullet,\bullet\rangle_{o}\ge \tau_\e\}\cap\ol{X}\times\ol{X}}\Phi(x,y)e^{2(1-s)\gd\langle x,y\rangle_{o}}d\mu_{o,t}(x)d\mu_{o,t}(y)\\
&-\int_{\{\langle \bullet,\bullet\rangle_{o}\ge \tau_\e\}\cap\dd {X}\times\dd {X}}\Phi(\xi,\eta)e^{2(1-s)\gd\langle \xi,\eta\rangle_{o}}d\mu_{o}(\xi)d\mu_{o}(\eta)|\\
&\le2\|\Phi\|_\infty\e
\end{align*}
which concludes the proof
\end{proof}

Let us recall some materials concerning kernels over topological spaces:
\begin{defn}
A continuous kernel $q$ over $X$ is called positive definite if there exist a Hilbert space ${\bf H}$ and a continuous map $r:X\rightarrow {\bf H}$ such that:
$$q(x,y)=(r(x),r(y))_{\bf H}$$
for all $x,y\in X$.
\end{defn}
Equivalently a continuous kernel $q$ over $X$ is positive definite if for all finite set $F\subset X$ and any family of complex numbers $(c_x)_{x\in F}$:
$$\sum_{x,y\in F}c_x\ol{c_y}q(x,y)$$
is positive \cite{MR2415834}.

\begin{thm*}[Schoenberg's]
Let $k$ be a conditionally negative kernel over $X$.
Then for any $t\ge0$, the kernel $q_t=e^{-tk}$ is positive definite over $X$.
\end{thm*}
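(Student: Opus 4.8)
The plan is to realize the positive-definite kernel $q_t = e^{-tk}$ explicitly via a Gaussian construction built from the conditionally negative kernel $k$. First I would fix a basepoint $o \in X$ and recall the standard bijection between conditionally negative kernels and semi-definite bilinear forms: since $k$ is symmetric with $k(o,o) = 0$ and conditionally negative, the kernel
$$
\beta(x,y) = \tfrac{1}{2}\bigl(k(x,o) + k(y,o) - k(x,y)\bigr)
$$
is positive definite on $X$. Indeed, for any finite $F \subset X$ and complex coefficients $(c_x)$, setting $c_o = -\sum_{x \in F} c_x$ and applying the conditional negativity of $k$ to $F \cup \{o\}$ with the family $(c_x)$ (now summing to zero) yields, after expanding, $\sum_{x,y \in F} c_x \overline{c_y}\, \beta(x,y) \ge 0$. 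By the GNS-type construction (or the stated characterization of positive-definite kernels), there is a Hilbert space $\mathbf{H}_0$ and a map $v : X \to \mathbf{H}_0$ with $\beta(x,y) = (v(x), v(y))_{\mathbf{H}_0}$ and $v(o) = 0$, so that $k(x,y) = \|v(x) - v(y)\|_{\mathbf{H}_0}^2$.

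Next I would pass to the Gaussian Hilbert space. Let $\mathcal{G}$ be the real Gaussian Hilbert space associated to $\mathbf{H}_0$ — that is, a centered real Gaussian process $(W_h)_{h \in \mathbf{H}_0}$ with $\mathbb{E}[W_h W_{h'}] = (h,h')_{\mathbf{H}_0}$, living on some probability space $(\Omega, \mathbb{P})$. For each $x \in X$ and scaling parameter $t > 0$, consider the unit vector in $L^2(\Omega, \mathbb{P})$ given by
$$
r_t(x) = \exp\bigl(i\sqrt{2t}\, W_{v(x)}\bigr) \in L^2(\Omega, \mathbb{P}; \mathbb{C}).
$$
Using the characteristic function of a centered Gaussian, $\mathbb{E}[e^{i\lambda Z}] = e^{-\lambda^2 \sigma^2/2}$ for $Z \sim \mathcal{N}(0,\sigma^2)$, and the fact that $W_{v(x)} - W_{v(y)} = W_{v(x) - v(y)}$ has variance $\|v(x)-v(y)\|^2 = k(x,y)$, one computes
$$
\bigl(r_t(x), r_t(y)\bigr)_{L^2(\Omega)} = \mathbb{E}\bigl[e^{i\sqrt{2t}(W_{v(x)} - W_{v(y)})}\bigr] = e^{-t\, k(x,y)} = q_t(x,y).
$$
This exhibits $q_t$ as a Gram kernel, hence positive definite; one then checks continuity of $x \mapsto r_t(x)$, which follows from continuity of $k$ (hence of $\beta$ and of $x \mapsto v(x)$ in $\mathbf{H}_0$-norm) together with $\|r_t(x) - r_t(y)\|_{L^2(\Omega)}^2 = 2(1 - e^{-t k(x,y)})$.

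The main point requiring care — and the only genuine obstacle — is the reduction to a bona fide positive-definite $\beta$ and the Gaussian realization: one must verify that the "polarization" $\beta$ really is positive definite (the sign bookkeeping in the conditional negativity inequality with the added point $o$) and that a Gaussian Hilbert space over an arbitrary real Hilbert space exists and supports the exponential vectors above. Both are classical: the first is the elementary computation just indicated, and the second is standard Gaussian measure theory (or, alternatively, one can bypass measures entirely and use the symmetric Fock space $\mathcal{S}(\mathbf{H}_0^{\mathbb{C}})$ together with coherent states $e(h) = \bigoplus_n h^{\otimes n}/\sqrt{n!}$, for which $(e(h), e(h')) = e^{(h,h')}$, applied to $h = \sqrt{2t}\, v(x)$ after a suitable real/imaginary adjustment). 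Either route gives the continuous feature map $r_t$ and completes the proof. Finally, for the degenerate case $t = 0$ the statement is trivial since $q_0 \equiv 1$ is plainly positive definite.
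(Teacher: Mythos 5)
The paper does not give a proof of Schoenberg's theorem; it is quoted as a classical result (with the surrounding material sourced from the cited Property~(T) reference), and the proposition is immediately put to use in Proposition~4.2 of the paper. Your argument is correct and self-contained. The polarization step is right: adding the basepoint $o$ to the finite set with coefficient $c_o=-\sum_{x\in F}c_x$ and expanding, using $k(o,o)=0$ and symmetry, gives $\sum_{x,y}c_x\ol{c_y}\,\beta(x,y)\ge 0$ (and the passage from real to complex coefficients is harmless for a real symmetric kernel, since the cross terms cancel). The GNS feature map $v$ is continuous because $\|v(x_n)-v(x)\|^2=\beta(x_n,x_n)-2\mathrm{Re}\,\beta(x_n,x)+\beta(x,x)\to 0$ by continuity of $k$, and your Gaussian (or, equivalently, Fock-space coherent-state) realization $r_t$ produces exactly the Gram identity $(r_t(x),r_t(y))=e^{-t\,k(x,y)}$ together with the continuity estimate $\|r_t(x)-r_t(y)\|^2=2(1-e^{-t\,k(x,y)})$, which is what the paper's definition of positive-definite kernel requires. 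For comparison, the argument most often given in references on property (T) is measure-free: once $k(x,y)=\|v(x)-v(y)\|^2$, one factors $e^{-t\,k(x,y)}=e^{-t\|v(x)\|^2}\,e^{2t\langle v(x),v(y)\rangle}\,e^{-t\|v(y)\|^2}$, observes that the outer factors are a congruence by a positive function (which preserves positive definiteness), and that $e^{2t\langle v(x),v(y)\rangle}=\sum_{n\ge 0}\tfrac{(2t)^n}{n!}\langle v(x),v(y)\rangle^n$ is positive definite because each term is a Schur power of the positive-definite kernel $\langle v(x),v(y)\rangle$ and positivity is preserved under pointwise limits. That route is more elementary (Schur's product theorem plus a power series, no Gaussian measure or Fock space), but it only verifies the finite-matrix positivity condition and then invokes the RKHS construction for the feature map; your version produces the continuous feature map $r_t$ explicitly, matching the form in which the paper actually uses the theorem.
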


The distance of a negatively curved $G$-space $(Z,d_Z)$ is called \textit{roughly conditionally negative} if it decomposes as $d_Z=N_0+\upsilon$ over a $G$-subspace $Z'\subset Z$ with $\mathcal{Q}(\Lambda(G))\subset Z'$ where:
\begin{itemize}
\item $N_0$ is a $G$-invariant conditionally negative kernel over $Z'$;
\item $\upsilon$ a continuous kernel over $Z'$, for all $o'\in Z'/G$, there exists $\k(G.o')\in\BR$ such that $\upsilon|_{G.o'\times G.o'}$ satisfies 
$$\upsilon|_{G.o'\times G.o'}(z,z')\xrightarrow{d(z,z')\rightarrow+\infty} \k(G.o')\in\BR$$
\end{itemize}
\begin{rem}\label{rem:reduction1}
In our case, because
$$\langle z,z'\rangle_o=\half(N_0(z,o)+N_0(z',o)-N_0(z,z'))+\half(\upsilon(z,o)+\upsilon(z',o)-\upsilon(z,z'))$$
for all $z,z'\in G.o$, the product 
$$(z,z')\in G.o\times G.o\mapsto\langle z,z'\rangle_{N_0,o}=\half(N_0(z,o)+N_0(z',o)-N_0(z,z'))$$ 
extends continuously to $\Lambda(G)\times \Lambda(G)$ as 
$$\langle \xi,\eta\rangle_{N_0,o}=\langle \xi,\eta\rangle_{o}-\half\k(G.o)$$
for all $\xi,\eta\in \Lambda(G)$.\\
Moreover the measures $(m_{o,s,t})_{t>\gd}$ supported over $G.o\times G.o\subset \ol{X}\times\ol{X}$ satisfy:
\begin{align*}
&dm_{o,s,t}(x,y)=\frac{1}{d_{o}^{2(1-s)\gd}(x,y)}d\mu_{o,t}(x)d\mu_{o,t}(y)\\
&=e^{(1-s)\gd(\upsilon(z,o)+\upsilon(z',o)-\upsilon(z,z'))}e^{2(1-s)\gd\langle x,y\rangle_{N_0,o}}d\mu_{o,t}(x)d\mu_{o,t}(y)\\
\end{align*}
and thus, using Corollary \ref{cor:mscv}, we have:
$$e^{2(1-s)\gd\langle \xi,\eta\rangle_{N_0,o}}d\mu_{o}(\xi)d\mu_{o}(\eta)=e^{-(1-s)\gd\k(G.o)}dm_{o,s}(\xi,\eta)$$
\end{rem}

\begin{prop}\label{prop:ispos}
Assume the distance $d$ over $X$ is roughly conditionally negative.
Then for any $\half<s\le1$ the quadratic form $Q_s$ over $\mathcal{F}_s$ constructed from the intertwiner $\mathcal{I}_s$ is positive or equivalently $\mathcal{I}_s$ as an operator over $L^2(\dd X,\mu_o)$ is positive.
\end{prop}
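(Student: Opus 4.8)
\textbf{Proof strategy for Proposition \ref{prop:ispos}.}
The plan is to reduce the positivity of $\mathcal{I}_s$ to a pointwise positivity statement about the integral kernel $(\xi,\eta)\mapsto e^{2(1-s)\gd\langle\xi,\eta\rangle_o}$ with respect to $\mu_o\otimes\mu_o$, and then to deduce that statement from Schoenberg's theorem applied to the conditionally negative part $N_0$ of the metric. Since $Q_s(\vp,\vp)=\langle\mathcal{I}_s\vp,\vp\rangle_{L^2(\dd X,\mu_o)}$ by Proposition \ref{prop:ins} and Lemma \ref{lem:issurl2}, it suffices to prove $Q_s(\vp,\vp)\ge 0$ for every $\vp\in\cC(\dd X)$, and by density this gives positivity of $\mathcal{I}_s$ on $L^2$.

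First I would use Remark \ref{rem:reduction1} to replace the visual Gromov product $\langle\cdot,\cdot\rangle_o$ by the modified product $\langle\cdot,\cdot\rangle_{N_0,o}$: up to the multiplicative constant $e^{-(1-s)\gd\k(G.o)}>0$, one has, for $\vp\in\cC(\dd X)$,
\begin{align*}
Q_s(\vp,\vp)&=\int_{\dd X\times\dd X}e^{2(1-s)\gd\langle\xi,\eta\rangle_o}\vp(\xi)\ol{\vp(\eta)}\,d\mu_o(\xi)\,d\mu_o(\eta)\\
&=e^{(1-s)\gd\k(G.o)}\int_{\Lambda(G)\times\Lambda(G)}e^{2(1-s)\gd\langle\xi,\eta\rangle_{N_0,o}}\vp(\xi)\ol{\vp(\eta)}\,d\mu_o(\xi)\,d\mu_o(\eta),
\end{align*}
since $\mu_o$ is supported on $\Lambda(G)$. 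Now write $2\langle\xi,\eta\rangle_{N_0,o}=N_0(\xi,o)+N_0(\eta,o)-N_0(\xi,\eta)$ (the continuous extension of $N_0$-Gromov product to $\Lambda(G)\times\Lambda(G)$ furnished by Remark \ref{rem:reduction1}), so that the kernel factors as
$$e^{2(1-s)\gd\langle\xi,\eta\rangle_{N_0,o}}=e^{(1-s)\gd N_0(\xi,o)}\,e^{-(1-s)\gd N_0(\xi,\eta)}\,e^{(1-s)\gd N_0(\eta,o)}.$$
Absorbing the outer factors into $\vp$ by setting $\psi(\xi)=e^{(1-s)\gd N_0(\xi,o)}\vp(\xi)$, the quadratic form becomes (up to the positive constant) $\int\int e^{-(1-s)\gd N_0(\xi,\eta)}\psi(\xi)\ol{\psi(\eta)}\,d\mu_o\,d\mu_o$. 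Since $N_0$ is conditionally negative and $(1-s)\gd\ge 0$ for $\half<s\le 1$, Schoenberg's theorem gives that $q=e^{-(1-s)\gd N_0}$ is a positive definite kernel on $Z'\supseteq\mathcal{Q}(\Lambda(G))$, hence on $\Lambda(G)$; therefore the double integral against $\psi\otimes\ol\psi$ is $\ge 0$. This proves $Q_s(\vp,\vp)\ge 0$, hence $\mathcal{I}_s\ge 0$ on $L^2(\dd X,\mu_o)$.

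The main obstacle is ensuring that Schoenberg's theorem can be applied at the level of the \emph{boundary}: $N_0$ is a priori a conditionally negative kernel on the space $Z'$, and what one needs is that its ``Gromov product'' $\langle\xi,\eta\rangle_{N_0,o}$ — equivalently the symmetrized combination $N_0(\xi,o)+N_0(\eta,o)-N_0(\xi,\eta)$ — extends continuously to $\Lambda(G)\times\Lambda(G)$ and that the extended $e^{-(1-s)\gd N_0}$ remains positive definite there. The continuous extension is exactly the content of Remark \ref{rem:reduction1} (the point of decomposing $d=N_0+\upsilon$ with $\upsilon$ asymptotically constant on orbits), and positive-definiteness passes to the closure because pointwise limits of positive-definite kernels are positive-definite and the convergence of $m_{o,s,t}$ in Corollary \ref{cor:mscv} realizes the boundary kernel as such a limit; one should spell out that the approximating kernels $e^{2(1-s)\gd\langle\,\cdot\,,\,\cdot\,\rangle_{N_0,o}}$ on $G.o\times G.o$ are positive definite by Schoenberg applied on $Z'$, and then invoke Corollary \ref{cor:mscv} to pass to the limit inside $\int\int(\cdot)\,d m_{o,s,t}$. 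The remaining verifications — boundedness so that all integrals converge (Lemma \ref{lem:issurl2}), and $(1-s)\gd\ge 0$ on the relevant range of $s$ — are routine.
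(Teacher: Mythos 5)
Your main argument does not quite hold as written, because the factorization
\[
e^{2(1-s)\gd\langle\xi,\eta\rangle_{N_0,o}}=e^{(1-s)\gd N_0(\xi,o)}\,e^{-(1-s)\gd N_0(\xi,\eta)}\,e^{(1-s)\gd N_0(\eta,o)}
\]
is ill-defined at the level of the boundary. Since $d=N_0+\upsilon$ with $\upsilon$ asymptotically bounded on orbits, the individual quantities $N_0(x,o)$ and $N_0(x,y)$ (for $y$ converging to a boundary point distinct from the limit of $x$) diverge to $+\infty$ as $x$ leaves compacts. Remark~\ref{rem:reduction1} asserts only that the \emph{combination} $\langle\cdot,\cdot\rangle_{N_0,o}=\tfrac12\left[N_0(\cdot,o)+N_0(\cdot,o)-N_0(\cdot,\cdot)\right]$ extends continuously to $\Lambda(G)\times\Lambda(G)$, not the separate terms. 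Consequently $\psi(\xi)=e^{(1-s)\gd N_0(\xi,o)}\vp(\xi)$ is identically $+\infty$ wherever $\vp\neq 0$, and $e^{-(1-s)\gd N_0(\xi,\eta)}$ is identically $0$ off the diagonal of $\Lambda(G)$; the ``absorbed'' integral is an indeterminate $\infty\cdot 0$, and the appeal to Schoenberg applied directly to the boundary kernel has no content.

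Your closing paragraph does identify the correct fix, and it is essentially the paper's route: do the factorization on $X\times X$ where it is finite, apply Schoenberg on $Z'$ to get a Hilbert-space realization $e^{-(1-s)\gd N_0(x,y)}=(r_{1-s}(x),r_{1-s}(y))$, show that the approximating quadratic forms $q_{s,t}(\vp)=\iint \vp\,\ol\vp\,e^{2(1-s)\gd\langle\cdot,\cdot\rangle_o}\,d\mu_{o,t}\otimes d\mu_{o,t}$ are nonnegative, and pass to $t\to\gd$ using Corollary~\ref{cor:mscv}. But the step you call ``routine'' is the crux: the kernel $(x,y)\mapsto(\tilde r(x),\tilde r(y))_{{\bf H}_{1-s}}$ with $\tilde r(x)=e^{(1-s)\gd d(x,o)}r_{1-s}(x)$ is \emph{unbounded} on $X\times X$, so ``integral of a positive-definite kernel is nonnegative'' is not automatic. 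The paper handles this by exhausting $X$ by compacts $K_n$, forming genuine vector-valued integrals $\int_{K_n}\vp\,\tilde r\,d\mu_{o,t}\in{\bf H}_{1-s}$ (via a Riesz representation argument for the bounded linear functionals $L_n$), observing that the restricted quadratic forms are norms squared, and passing $n\to\infty$ by dominated convergence; finiteness of $q_{s,t}(\vp)$ rests on Lemma~\ref{lem:intcond}. If you restructure so that the $\mu_{o,t}$ approximation is the main argument rather than a footnote, and supply the vector-valued integral step for the unbounded realization, you recover the paper's proof.
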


\begin{proof}
According to Corollary \ref{cor:mscv} one can assume that $(m_{o,s,t})_t$ converges weakly to $m_{o,s}$.
Moreover, using Remark \ref{rem:reduction1}, we can assume the distance $d$ to be conditionally negative.

The Schoenberg theorem implies that for any $0\le s\le1$ one can find a Hilbert space, ${\bf H}_{1-s}$, and a continuous map $r_{1-s}:X\rightarrow {\bf H}_{1-s}$ such that:
$$e^{-(1-s)\gd d(x,y)}=(r_{1-s}(x),r_{1-s}(y))_{1-s}$$
for all $x,y\in X$.
For any $\vp\in\cC(\ol{X})$ it follows that :
\begin{align*}
&q_{s,t}(\vp):=\int_{\ol{X}\times\ol{X}}\vp(x)\ol{\vp(y)}e^{2(1-s)\gd\langle x,y\rangle_{o}}d\mu_{o,t}(x)d\mu_{o,t}(y)\\
&=\int_{X\times X}\vp(x)\ol{\vp(y)}(e^{(1-s)\gd d(x,o)}r_{1-s}(x),e^{(1-s)\gd d(y,o)}r_{1-s}(y))_{1-s}d\mu_{o,t}(x)d\mu_{o,t}(y)
\end{align*}
for all $\gd<t$.\\

For $t>\gd$ fixed a dominated convergence argument shows that given an exhaustion of compact sets $(K_n)_n$, i.e. a increasing sequence of compact sets, $K_n$, with $\bigcup_n K_n=X$ and $K_n\subset\mathring{K_{n+1}}$ one has:
$$\int_{K_n\times K_n}\vp(x)\ol{\vp(y)}(e^{(1-s)\gd d(x,o)}r_{1-s}(x),e^{(1-s)\gd d(y,o)}r_{1-s}(y))_{1-s}d\mu_{o,t}(x)d\mu_{o,t}(y)\rightarrow q_{s,t}(\vp)$$
when $n$ goes to infinity.\\

Since $K_n$ is compact the linear form:
$$L_{n}:v\mapsto \int_{K_n}\ol{\vp}(x)(v,e^{(1-s)\gd d(x,o)}r_{1-s}(x))_{1-s}d\mu_{o,t}(x)$$
over ${\bf H}_{1-s}$ is bounded with norm $\|L_{n}\|\le \|\vp\|_\infty e^{\gd\text{diam}(\{o\}\cup K_n)}$. 
We denote:
$$\int_{K_n}\vp(x)e^{(1-s)\gd d(x,o)}r_{1-s}(x)\in {\bf H}_{1-s}$$
the unique vector such that
$$L_{n}(v)=(v,\int_{K_n}\vp(x)e^{(1-s)\gd d(x,o)}r_{1-s}(x))_{1-s}$$

In particular:
\begin{align*}
&\int_{K_n\times K_n}\vp(x)\ol{\vp(y)}(e^{(1-s)\gd d(x,o)}r_{1-s}(x),e^{(1-s)\gd d(y,o)}r_{1-s}(y))_{1-s}d\mu_{o,t}(x)d\mu_{o,t}(y)\\
&=(\int_{K_n}\vp(x)e^{(1-s)\gd d(x,o)}r_{1-s}(x)d\mu_{o,t}(x),\int_{K_n}\vp(y)e^{(1-s)\gd d(x,o)}r_{1-s}(y)d\mu_{o,t}(y))_{1-s}\ge0
\end{align*}
and therefore $q_{s,t}(\vp)\ge0$ for all $t>\gd$.

Eventually using Corollary \ref{cor:mscv} one has:
\begin{align*}
&q_{s,t}(\vp)\xrightarrow{t\rightarrow \gd}\int_{\dd X\times\dd X}\vp(\xi)\ol{\vp(\eta)}e^{2(1-s)\gd\langle \xi,\eta\rangle_{o}}d\mu_o(\xi)d\mu_o(\eta)=Q(\mathcal{I}_s(\vp),\vp)
\end{align*}
which is positive as a limit of the positive sequence $(q_{s,t}(\vp))_{t>\gd}$.
\end{proof}


Let $\mathcal{F}'_{1-s}$, with $\half< s\le 1$, be the sub-representation $\mathcal{F}'_{1-s}=\mathcal{I}_s[\mathcal{F}_{s}]\subset \mathcal{F}_{1-s}$.

\begin{cor}
For any $\half<s\le1$, there exists a unitary structure over $\mathcal{F}'_{1-s}$ such that the intertwiner $\mathcal{I}_s:\mathcal{F}_s\rightarrow\mathcal{F}_{1-s}$ extends as an isometric intertwiner between $\pi_{s}$ and $\pi_{1-s}$.
In addition $Q$ extends uniquely as a coupling between $\mathcal{H}_s$ and  $\mathcal{H}_{1-s}$, which denote respectively the Hilbert completion of $\mathcal{F}_{s}$ and $\mathcal{F}_{1-s}$, such that:
$$|Q(v,w)|\le\|v\|_s\|w\|_{1-s}$$
where $v\in\mathcal{H}_s$ and $w\in\mathcal{H}_{1-s}$.
\end{cor}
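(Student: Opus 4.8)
The plan is to run the usual Gelfand--Naimark--Segal-type passage from a positive $G$-invariant form to a unitary representation; the one piece of genuine content --- positivity of $\mathcal{I}_s$ --- is already furnished by Proposition~\ref{prop:ispos}, so the remaining work is bookkeeping. First I would use that, by Proposition~\ref{prop:ispos}, the Hermitian form $Q_s$ on $\mathcal{F}_s\simeq\cC(\dd X)$ is positive semi-definite, and by Proposition~\ref{prop:ins} it is $G$-invariant. Its null space $\mathcal{N}_s=\{\vp:Q_s(\vp,\vp)=0\}$ --- which by Cauchy--Schwarz for positive forms equals $\{\vp:Q_s(\vp,\cdot)\equiv0\}$ and, by Lemma~\ref{lem:issurl2}, coincides with the kernel of $\mathcal{I}_s$ on $L^2(\dd X,\mu_o)$ --- is then a $G$-invariant subspace, so $Q_s$ descends to an inner product on $\mathcal{F}_s/\mathcal{N}_s$. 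I let $\mathcal{H}_s$ be its Hilbert completion, with norm $\|\cdot\|_s$; the operators $\pi_s(g)$ descend to $Q_s$-isometries of $\mathcal{F}_s/\mathcal{N}_s$ and hence extend to unitaries of $\mathcal{H}_s$, and since $G$ is discrete there is no strong-continuity issue, so $(\mathcal{H}_s,\pi_s)$ is a unitary $G$-representation.

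Next I would transport this structure along $\mathcal{I}_s$ onto $\mathcal{F}'_{1-s}=\mathcal{I}_s[\mathcal{F}_s]$, setting $\langle w_1,w_2\rangle_{1-s}:=Q_s(\vp_1,\vp_2)$ for $w_i=\mathcal{I}_s[\vp_i]$. The point requiring care --- the main, and in fact only, subtlety --- is that this is well defined: two preimages of a given $w_i$ differ by an element of $\mathcal{N}_s=\Ker\mathcal{I}_s$, which $Q_s$-pairs trivially with everything by Cauchy--Schwarz, so the value $Q_s(\vp_1,\vp_2)$ does not depend on the choices. The form $\langle\cdot,\cdot\rangle_{1-s}$ is positive semi-definite, and it is $G$-invariant by the intertwining relation $\pi_{1-s}(g)\mathcal{I}_s=\mathcal{I}_s\pi_s(g)$ of Proposition~\ref{prop:ins}; I take $\mathcal{H}_{1-s}$ to be its Hilbert completion (after separating out null vectors), with norm $\|\cdot\|_{1-s}$ and unitary representation $\pi_{1-s}$. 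By construction $\mathcal{I}_s$ carries $\mathcal{F}_s/\mathcal{N}_s$ isometrically onto $\mathcal{F}'_{1-s}$ modulo its null vectors (well-definedness and injectivity on the quotient both reduce to $\|\mathcal{I}_s\vp\|_{1-s}^2=Q_s(\vp,\vp)$), so it extends to a unitary $\mathcal{I}_s:\mathcal{H}_s\to\mathcal{H}_{1-s}$ which, by the same relation, intertwines $\pi_s$ and $\pi_{1-s}$ --- the announced isometric intertwiner.

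Finally, for the coupling $Q$ of Lemma~\ref{lem:dual}, I would use self-adjointness of $\mathcal{I}_s$ on $L^2(\dd X,\mu_o)$ (Lemma~\ref{lem:issurl2}) to obtain, for $\vp\in\mathcal{F}_s$ and $w=\mathcal{I}_s[\psi]\in\mathcal{F}'_{1-s}$,
$$Q(\vp,w)=\int_{\dd X}\vp\,\ol{\mathcal{I}_s[\psi]}\,d\mu_o=\int_{\dd X}\mathcal{I}_s[\vp]\,\ol{\psi}\,d\mu_o=Q_s(\vp,\psi)=\langle\mathcal{I}_s\vp,w\rangle_{1-s},$$
whence $|Q(\vp,w)|\le\|\mathcal{I}_s\vp\|_{1-s}\|w\|_{1-s}=\|\vp\|_s\|w\|_{1-s}$ by Cauchy--Schwarz and unitarity of $\mathcal{I}_s$. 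As $\mathcal{F}_s$ and $\mathcal{F}'_{1-s}$ are dense in $\mathcal{H}_s$ and $\mathcal{H}_{1-s}$ respectively, this estimate forces a unique continuous extension $Q:\mathcal{H}_s\times\mathcal{H}_{1-s}\to\BC$, necessarily $Q(v,w)=\langle\mathcal{I}_s v,w\rangle_{1-s}$, satisfying the announced inequality; $G$-invariance passes to the closure. Thus all the substance sits in Proposition~\ref{prop:ispos}: once positivity is granted, the corollary follows by the routine GNS machinery, the only delicate point being the well-definedness of the transported form in the second step.
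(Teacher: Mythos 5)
Your proposal is correct and follows essentially the same route as the paper: define the $\mathcal{F}'_{1-s}$ inner product by pulling back $Q_s$ along $\mathcal{I}_s$ (the paper writes $Q_{1-s}(\mathcal{I}_s\vp,\mathcal{I}_s\phi)=(\mathcal{I}_s\vp,\phi)$), check $G$-invariance via $\pi_{1-s}(g)\mathcal{I}_s=\mathcal{I}_s\pi_s(g)$ and $\pi_s(g)^*=\pi_{1-s}(g^{-1})$, observe $\mathcal{I}_s$ is isometric by construction, and obtain the coupling bound from $Q(\vp,\mathcal{I}_s\phi)=Q_s(\vp,\phi)$ and Cauchy--Schwarz. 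The only cosmetic difference is that you make the GNS null-space quotient explicit; your identification of that null space with $\Ker\mathcal{I}_s$ should cite the positivity of $\mathcal{I}_s$ (Proposition \ref{prop:ispos}) rather than only Lemma \ref{lem:issurl2}, but this does not affect the argument.
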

\begin{proof}
Using the self-adjointness of $\mathcal{I}_s$ over $L^2(\dd X,\mu_o)$ it appears that the bilinear form  over $\mathcal{F}'_{1-s}\subset L^2(\dd X,\mu_o)$, $Q_{1-s}$, given as:
$$Q_{1-s}(\vp',\phi')=(\mathcal{I}_s[\vp],\phi)$$
for $\vp'=\mathcal{I}_s[\vp],\phi'=\mathcal{I}_s[\phi]\in \mathcal{F}'_{1-s}$ with $\vp,\phi\in\mathcal{F}_{s}$ is well defined.

It follows
$$Q_{1-s}(\vp')=(\mathcal{I}_s[\vp],\vp)\ge0$$
since $\mathcal{I}_s$ is positive and
$$Q_{1-s}(\pi_{1-s}(g)\vp')=(\mathcal{I}_s\pi_{s}(g)\vp,\pi_{s}(g)\vp)=(\mathcal{I}_s[\vp],\vp)=Q_{1-s}(\vp')$$
for all $\vp'=\mathcal{I}_s[\vp]\in\mathcal{F}'_{1-s}$ with $\vp\in\mathcal{F}_{s}$ and $g\in G$.

By the very definition of $Q_{1-s}$ the operator $\mathcal{I}_s:\mathcal{F}_s\rightarrow \mathcal{F}'_{1-s}$ is isometric and extends to an unitary intertwiner between $\mathcal{H}_{s}$ and $\mathcal{H}_{1-s}$.

Moreover the Cauchy-Schwarz inequality implies:
$$|Q(\vp,\mathcal{I}_s[\phi])|\le\|\vp\|_s\|\phi\|_s= \|\vp\|_s\|\phi'\|_{1-s}$$
for all $\vp,\phi\in\mathcal{F}_s$ and $\phi'=\mathcal{I}_s[\phi]\in\mathcal{F}_{1-s}$ and thus $Q$ extends as a coupling between $\mathcal{H}_{s}$ and $\mathcal{H}_{1-s}$.
Lemma \ref{lem:dual} implies that $Q$ is non-degenerated over $\mathcal{H}_{s}\times\mathcal{H}_{1-s}$.
\end{proof}

We extend the family of representations $\mathcal{H}_{s\in[0,1]\setminus\{\half\}}$ at $\half$ with the Koopman representation $\mathcal{H}_\half=L^2(\dd X,\mu_o)\subset L^2(\dd X,\mu_o)$, that is nothing but the so-called boundary representation \cite{MR2787597}. 
In Proposition \ref{prop:fell} we prove this extension is actually continuous for the Fell topology over the unitary dual of $G$.

In the rest we refer to the representations $(\mathcal{H}_s,\pi_s)_{s\in[0,1]}$ as complementary even if the proof of Theorem 2 will only be given in Section \ref{sec:end}.

\section{Analysis of matrix coefficients}\label{sec:anamat}
From this section the operators $\mathcal{I}_s$ over $L^2(\dd X,\mu_o)$ is assumed to be positive for $\half<s\le1$.
According to Proposition \ref{prop:ispos} this is the case whenever the distance over $X$ is conditionally negative.

This section is dedicated to estimates over averages of matrix coefficients associated to unitary representations $(\mathcal{H}_s)_{s\in[0,1]}$ introduced in Section \ref{sec:pos}.
This will be our principal tool in the investigation of those representations in Section \ref{sec:end}.\\

We start by extending the asymptotic estimates of the $s$-Martin-Poisson transforms discussed in Subsection \ref{sub:int} for parameter $\half<s\le1$ to $0\le s\le1$.

\begin{lem}\label{lem:extmart}
Given $\half\le s\le1$ and $\vp\in \cC(\dd X)$, there exists a positive decreasing function $\go_\vp$ with $\go_\vp(t)\xrightarrow{t\rightarrow+\infty}0$ such that:
$$\frac{\gTh_{1-s}[|\vp-\vp(\hat{x})|]}{\gTh_{1-s}[{\bf1}]}(x)\le \go_\vp(d(o,x))$$
for all $x\in \mathcal{Q}(\Lambda(G))$.
In particular 
$$\frac{\gTh_{1-s}[\vp]}{\gTh_{1-s}[{\bf1}]}(x)=\frac{1}{\gTh_{1-s}[{\bf1}](x)}\int_{\dd X}[\frac{d\mu_x}{d\mu_o}]^s(\xi)\vp(\xi)d\mu_o(\xi)\xrightarrow{x\rightarrow\eta}\vp(\eta)$$
for all $\eta\in \Lambda(G)$, i.e. $\frac{\gTh_{1-s}[\vp]}{\gTh_{1-s}[{\bf1}]}$ extends continuously over the closed geodesic hull of the limit set, $\ol{\mathcal{Q}(\Lambda(G))}$.
\end{lem}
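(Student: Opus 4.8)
The plan is to reduce the estimate to the uniform continuity of $\vp$ at points of $\Lambda(G)$ together with the Shadow lemma, which controls the $\mu_o$-mass concentrated near $\hat x$ along a geodesic. First I would fix $x\in\mathcal{Q}(\Lambda(G))$ with $d(o,x)=:L$ large, and write, using the definition of the retraction,
$$
\gTh_{1-s}[|\vp-\vp(\hat x)|](x)=\int_{\dd X}\Bigl|\vp(\xi)-\vp(\hat x)\Bigr|\,\Bigl[\tfrac{d\mu_x}{d\mu_o}\Bigr]^{s}(\xi)\,d\mu_o(\xi),
$$
with $\bigl[\tfrac{d\mu_x}{d\mu_o}\bigr](\xi)=e^{-\gd b_\xi(x,o)}=e^{-\gd d(o,x)}e^{2\gd\langle\xi,x\rangle_o}$. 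I would split $\dd X$ into the ``far'' region $\{\langle\xi,x\rangle_o\le \rho\}$ and the ``near'' shadow $\mathcal{U}_o(x,\rho)=\{\langle\xi,x\rangle_o>\rho\}$ for a threshold $\rho$ to be optimized, roughly $\rho\sim L/2$ but with enough slack (say $\rho=\theta L$ for a fixed $\theta\in(0,1)$, or a slowly growing function of $L$) so that both tails are small.

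On the far region, since $\langle\hat x,\xi\rangle_o\gtrsim\min\{\langle\hat x,x\rangle_o,\langle x,\xi\rangle_o\}-c_X$ and $\langle\hat x,x\rangle_o\ge d(o,x)-c_{X,G,f,o}$ by the retraction property, a point $\xi$ with $\langle\xi,x\rangle_o\le\rho$ has $\langle\hat x,\xi\rangle_o$ controlled; but more simply I would just bound $|\vp-\vp(\hat x)|\le 2\|\vp\|_\infty$ and estimate
$$
\int_{\{\langle\xi,x\rangle_o\le\rho\}}e^{-\gd L}e^{2\gd\langle\xi,x\rangle_o}\,d\mu_o(\xi)\ \preceq\ e^{-\gd L}e^{2\gd\rho}\cdot\mu_o(\dd X),
$$
and compare with $\gTh_{1-s}[{\bf1}](x)\asymp e^{-(\half-|s-\half|)\gd L}$ from Lemma \ref{lem:poies} (here $\half-|s-\half|=1-s$ for $s\ge\half$, so $\gTh_{1-s}[{\bf1}](x)\asymp e^{-(1-s)\gd L}$). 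On the near shadow $\mathcal{U}_o(x,\rho)$, every $\xi$ satisfies $\langle\xi,\hat x\rangle_o\gtrsim\rho-c$ (again by the retraction estimate and the ultrametric-type inequality), hence $d_o(\xi,\hat x)\preceq e^{-\rho}$, so uniform continuity of $\vp$ near $\Lambda(G)$ gives $|\vp(\xi)-\vp(\hat x)|\prec\go(\rho)$ with $\go$ the modulus from the preliminaries; then
$$
\int_{\mathcal{U}_o(x,\rho)}|\vp(\xi)-\vp(\hat x)|\Bigl[\tfrac{d\mu_x}{d\mu_o}\Bigr]^{s}d\mu_o\ \le\ \go(\rho)\int_{\dd X}\Bigl[\tfrac{d\mu_x}{d\mu_o}\Bigr]^{s}d\mu_o=\go(\rho)\,\gTh_{1-s}[{\bf1}](x).
$$
Dividing by $\gTh_{1-s}[{\bf1}](x)$, the near contribution is $\preceq\go(\rho)$ and the far contribution is $\preceq e^{-\gd L+2\gd\rho+(1-s)\gd L}=e^{-\gd(s L-2\rho)}$; choosing $\rho=\theta L$ with $\theta<s/2$ makes this $\to 0$, while $\go(\theta L)\to 0$. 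Taking $\go_\vp(L):=C\bigl(\go(\theta L)+e^{-\gd(s-2\theta)L}\bigr)$, decreasing in $L$ (or replacing by its decreasing majorant), yields the claimed bound, uniformly in $x\in\mathcal{Q}(\Lambda(G))$ with the constants depending only on $o,G,\vp,s$. The ``in particular'' statement follows immediately: $\bigl|\tfrac{\gTh_{1-s}[\vp]}{\gTh_{1-s}[{\bf1}]}(x)-\vp(\hat x)\bigr|\le\go_\vp(d(o,x))$, and as $x\to\eta\in\Lambda(G)$ we have $d(o,x)\to\infty$ and $\hat x=f(x)\to f(\eta)=\eta$ by continuity of $f$ at $\eta$, so the ratio extends continuously to $\ol{\mathcal{Q}(\Lambda(G))}$ by the value $\vp(\eta)$ on the boundary.

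The main obstacle is making the splitting threshold $\rho$ do double duty: it must be large enough that $\go(\rho)$ is small (forcing $\rho$ large, comparable to $L$) yet small enough that the exponential far-tail $e^{-\gd(sL-2\rho)}$ stays controlled (forcing $\rho< sL/2$). This is only possible because $s>0$; at $s=0$ the transform $\gTh_1$ is constant and the statement is vacuous, and the case $s=\half$ recovers the classical Martin--Poisson asymptotics. A secondary technical point is that $\vp$ is only assumed continuous on $\dd X$, so its modulus of continuity near $\Lambda(G)$ is the one recorded in the preliminaries ($|\vp(\xi)-\vp(x)|\prec\go(\langle\xi,x\rangle_o)$); one must be slightly careful that the comparison point $\hat x=f(x)$ lies in $\Lambda(G)$ and that $\langle\hat x,x\rangle_o\gtrsim d(o,x)$, which is exactly the defining property of a boundary retraction, so this causes no real difficulty.
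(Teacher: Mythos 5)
Your decomposition — split $\dd X$ into a near shadow where uniform continuity of $\vp$ is used and a far complement where the kernel decays, then optimize the splitting threshold against the normalizer from Lemma \ref{lem:poies} — is the same strategy the paper uses (there the threshold $\rho'$ is taken fixed and depending only on $\e$, whereas you let it grow linearly with $L=d(o,x)$ to get an explicit modulus; either choice works). However there is a concrete arithmetic error in your far-tail bound. The $s$-Poisson kernel is $[\tfrac{d\mu_x}{d\mu_o}]^{s}(\xi)=e^{-s\gd L}e^{2s\gd\langle\xi,x\rangle_o}$, but on the far region you integrate $e^{-\gd L}e^{2\gd\langle\xi,x\rangle_o}$, i.e. the $s=1$ kernel. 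For $\xi$ with $\langle\xi,x\rangle_o\le\rho<L/2$ one has $L-2\langle\xi,x\rangle_o>0$, so $e^{-s\gd(L-2\langle\xi,x\rangle_o)}\ge e^{-\gd(L-2\langle\xi,x\rangle_o)}$ for $s<1$: the quantity you used \emph{underestimates} the correct far tail rather than bounding it from above. The correct normalized far contribution is
$$
\frac{e^{-s\gd L}e^{2s\gd\rho}}{\gTh_{1-s}[{\bf1}](x)}\ \asymp\ e^{(1-2s)\gd L+2s\gd\rho}=e^{-\gd\bigl((2s-1)L-2s\rho\bigr)},
$$
not $e^{-\gd(sL-2\rho)}$. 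With $\rho=\theta L$ you therefore need $\theta<\frac{2s-1}{2s}$, not $\theta<s/2$; this is a genuinely smaller window, and it collapses to the empty interval as $s\to\half^+$. In particular your argument as written does not cover $s=\half$: at $s=\half$ the far contribution is $\asymp\rho/L$ and a sub-linear threshold (e.g.\ $\rho\sim\sqrt{L}$) is needed. The paper avoids this by disposing of the endpoints $s=\half$ (Lemma 5.3 of \cite{Garncarek:2014aa}) and $s=1$ (classical Martin--Poisson) by reference and only running the exponential argument for $\half<s<1$. Apart from this slip — which is fixable by the correct exponent and the corrected range of $\theta$, combined with an explicit citation for $s=\half$ — your near-region analysis, the ``in particular'' deduction via the retraction, and the overall structure agree with the paper's proof.
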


\begin{proof}
The case $s=1$ corresponds to the classic Martin-Poisson transform and $s=\half$ is \cite{Garncarek:2014aa} Lemma 5.3 proof. 
One can therefore assume $\half< s<1$.
Given $\vp\in \cC(\dd X)$ and $x\in \mathcal{Q}(\Lambda(G))$ we denote $\nabla_x\vp(\xi)=\vp(\xi)-\vp(\hat{x})$.

It is enough to prove that for any $\e$, there exists $\rho$ such that $d(o,x)\ge \rho$ implies
$$\frac{\gTh_{1-s}[|\nabla_x\vp|]}{\gTh_{1-s}[{\bf1}]}(x)\le\e$$
Because $\hat{\vp}$ is uniformly continuous over $\dd X$ there exists a bounded positive decreasing function over $\BR_+$  with $\go_\vp'(t)\xrightarrow{t\rightarrow+\infty}0$ such that:
$$|\nabla_x\vp|(\xi)\le\go_\vp'(\langle\xi,\hat{x}\rangle_o)\le \go_\vp'(\langle\xi,x\rangle_o-C)$$
for some universal constant $C\ge0$.\\
On the other hand for any $\rho'>r$:
\begin{align*}
\frac{1}{\gTh_{1-s}[{\bf1}](x)}\int_{\mathcal{O}_o(x,d(o,x)-\rho')^c} e^{-s\gd b_\xi(x,o)}&d\mu_o(\xi)=
\frac{e^{s\gd d(x,o)}}{\gTh_{1-s}[{\bf1}](x)}\int_{\mathcal{O}_o(x,d(o,x)-\rho')^c} e^{2s\gd\langle\xi,x\rangle}d\mu_o(\xi)\\
&\asymp e^{(1-2s)\gd d(x,o)}\int_{\mathcal{O}_o(x,d(o,x)-\rho')^c} e^{2s\gd\langle\xi,x\rangle}d\mu_o(\xi)\\
&\le e^{(1-2s)\gd d(x,o)}e^{2s\gd \rho'}
\end{align*}
Eventually given $\e$ and $\rho>\rho'\ge r$ such that $\go_\vp'(\rho'-C)\le\half\e$ and $e^{-(1-2s)\gd \rho}e^{2s\gd \rho'}\le\frac{\e}{4\|\vp\|_\infty}$ one has:
\begin{align*}
\frac{\gTh_{1-s}[|\nabla_x\vp|]}{\gTh_{1-s}[{\bf1}]}&(x)=\frac{1}{\gTh_{1-s}[{\bf1}](x)}\int_{\dd X} \gTh_{1-s}[|\nabla_x\vp|](\xi)e^{-s\gd b_\xi(x,o)}d\mu_o(\xi)\\
&=\frac{1}{\gTh_{1-s}[{\bf1}](x)}\int_{\mathcal{O}_o(x,d(o,x)-\rho')} |\nabla_x\vp|(\xi)e^{-s\gd b_\xi(x,o)}d\mu_o(\xi)\\
&+\frac{1}{\gTh_{1-s}[{\bf1}](x)}\int_{\mathcal{O}_o(x,d(o,x)-\rho')^c} |\nabla_x\vp|(\xi)e^{-s\gd b_\xi(x,o)}d\mu_o(\xi)\\
&\le\go_\vp'(\rho'-C)+2\|\vp\|_\infty e^{-(1-2s)\gd d(x,o)}e^{2s\gd \rho'}\le\e
\end{align*}
whenever $d(o,x)\ge\rho$.

\end{proof}

We are ready to prove the following equidistribution over \textit{regular} matrix coefficients:
\begin{prop}\label{prop:matequi}
Given any continuous functions $\vp, \phi\in \cC(\dd X)$ and $f_1,f_2\in\cC(\ol{X})$ the averages of matrix coefficients:
$$\int_{G} \frac{(\pi_s(g)\vp,\phi)}{\gTh_{1-s}[{\bf1}](g.o)}.f_1(g.o)f_2(g^{-1}.o)d\nu_{o,t}(g)$$
equidistribute when $t$ goes at infinity to:
$$(\mathcal{I}_s[\vp],f_2|_{\dd X})(\phi,f_1|_{\dd X})$$
\end{prop}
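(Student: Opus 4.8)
The plan is to reduce the statement to the equidistribution corollary following Lemma \ref{lem:vitali} (the weak convergence of $\nu_{o,t}$ against continuous functions of $(g.o,g^{-1}.o)$), by showing that the integrand is, up to an error that vanishes uniformly as $t\to\gd$ (equivalently $t\to\infty$ in the $\nu_{o,t}$-parameter), a continuous function of the pair $(g.o,g^{-1}.o)\in\ol X\times\ol X$ which extends continuously to $\Lambda(G)\times\Lambda(G)$ with boundary value
$$
\mathcal{I}_s[\vp](\widecheck g)\,\ol{\mathcal{I}_s[\phi](\hat g)}\;\text{(no this is not it)}\;\longrightarrow\;\bigl(\text{on }\Lambda(G)\times\Lambda(G)\bigr)\quad \mathcal{I}_s[\vp](\eta)\,f_2(\eta)\;\cdot\;\ol{\phi(\xi)}\,f_1(\xi),
$$
so that the corollary gives the integral of this against $d\mu_o(\xi)d\mu_o(\eta)$, which is exactly $(\mathcal{I}_s[\vp],f_2|_{\dd X})(\phi,f_1|_{\dd X})$. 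So the whole argument is: (i) rewrite the matrix coefficient $(\pi_s(g)\vp,\phi)$ normalized by $\gTh_{1-s}[{\bf1}](g.o)$ as an $s$-Poisson-type quantity; (ii) apply the boundary asymptotics of Lemma \ref{lem:extmart} and Lemma \ref{lem:poies} to identify its limit along geodesic rays; (iii) upgrade pointwise-along-rays convergence to uniform control via the convex-cocompactness (boundary retraction $f$) and the Shadow lemma, so that the integrand is uniformly close to a genuine continuous function on $\ol X\times\ol X$; (iv) invoke the equidistribution corollary.

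First I would unwind the matrix coefficient. By definition $\pi_s(g)\vp(\xi)=[\tfrac{dg\mu_o}{d\mu_o}(\xi)]^s\vp(g^{-1}\xi)=e^{-s\gd b_\xi(g.o,o)}\vp(g^{-1}\xi)$, so
$$
\frac{(\pi_s(g)\vp,\phi)}{\gTh_{1-s}[{\bf1}](g.o)}
=\frac{1}{\gTh_{1-s}[{\bf1}](g.o)}\int_{\dd X}e^{-s\gd b_\xi(g.o,o)}\vp(g^{-1}\xi)\,\ol{\phi(\xi)}\,d\mu_o(\xi).
$$
Changing variables $\xi\mapsto g\xi$ turns the $\vp(g^{-1}\xi)$ into $\vp(\xi)$ and produces a Radon–Nikodym factor; after this substitution one recognizes, via the cocycle identity for $b$ and the conformality of $\{d_x\}$, a quantity of the form $\tfrac{1}{\gTh_{1-s}[{\bf1}](g.o)}\gTh_{1-s}[\Psi_g](g.o)$ (or its mirror at $g^{-1}.o$) where $\Psi_g$ collects $\vp$ and the $g$-translated $\phi$. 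The key structural point, which mirrors the final displayed computation in Section \ref{sec:banachcomp} showing $\gTh_s[\vp]=e^{-(1-s)\gd d(x,o)}\mathcal{I}_s[\vp]$, is that the normalized transform $\gTh_{1-s}[\,\cdot\,]/\gTh_{1-s}[{\bf1}]$ evaluated near $\eta\in\Lambda(G)$ converges to evaluation of $\mathcal{I}_s$ at $\eta$ against the $\mu_o$-integral in the remaining variable; Lemma \ref{lem:extmart} is precisely the tool that makes this limit exist and be continuous up to the boundary, and it handles the split between the ``shadow'' part of $\dd X$ (where the transform concentrates and sees only the value near $\eta$) and its complement (which is negligible by the estimate $e^{(1-2s)\gd d(x,o)}e^{2s\gd\rho'}\to0$ appearing in that proof).

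The main obstacle, and where I would spend most effort, is step (iii): the integrand $g\mapsto \tfrac{(\pi_s(g)\vp,\phi)}{\gTh_{1-s}[{\bf1}](g.o)}f_1(g.o)f_2(g^{-1}.o)$ is a function on $G$, and I must compare it against a fixed continuous function on $\ol X\times\ol X$ evaluated at $(g.o,g^{-1}.o)$ with an error tending to $0$ uniformly in $g$ as $|g|\to\infty$. Here convex-cocompactness enters decisively: for $g$ with $d(g.o,o)$ large, $g.o$ lies within $R$ of a geodesic from $o$ to $\hat g=f(g.o)\in\Lambda(G)$ and symmetrically for $g^{-1}.o$ and $\widecheck g$, so Lemma \ref{lem:extmart}'s rate $\go_\vp(d(o,g.o))$ gives that $\gTh_{1-s}[\vp]/\gTh_{1-s}[{\bf1}]$ at $g.o$ differs from $\mathcal I_s[\vp]$ evaluated via the retraction by at most $\go(d(o,g.o))\to0$; one also uses uniform continuity of $f_1,f_2$ at infinity (the $\vp\in\cC(\ol X)$ estimate $|\vp(\xi)-\vp(x)|\prec\go(\langle\xi,x\rangle_o)$ from the preliminaries) to replace $f_1(g.o),f_2(g^{-1}.o)$ by $f_1(\widecheck g),f_2(\hat g)$. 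Assembling, the integrand equals $\Phi(\hat g,\widecheck g)+o(1)$ for a continuous $\Phi$ on $\Lambda(G)\times\Lambda(G)$ with $\Phi(\eta,\xi)=\mathcal I_s[\vp](\eta)f_2(\eta)\ol{\phi(\xi)}f_1(\xi)$; since $\nu_{o,t}$ is supported on $S^*_{o,G}(t)$ with $d(g.o,o)\in[tR,(t+1)R)\to\infty$, the $o(1)$ term integrates to $0$, and the equidistribution corollary applied to $\Phi$ (extended arbitrarily continuously to $\ol X\times\ol X$, which is harmless since $\mu_o\otimes\mu_o$ sits on $\Lambda(G)\times\Lambda(G)$) yields $\int\Phi\,d\mu_o\otimes d\mu_o=(\mathcal I_s[\vp],f_2|_{\dd X})(\phi,f_1|_{\dd X})$, completing the proof. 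A minor technical wrinkle to check carefully is that $\mathcal I_s[\vp]$, a priori only in $\cC(\ol X)$ and bounded, is what appears, so that the pairing $(\mathcal I_s[\vp],f_2|_{\dd X})=\int_{\dd X}\mathcal I_s[\vp](\eta)\ol{f_2(\eta)}\,d\mu_o(\eta)$ is well defined — this is immediate from Proposition \ref{prop:ins} and $\mu_o$ being a probability.
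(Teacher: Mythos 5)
Your proposal follows the same route as the paper: pointwise approximate the normalized matrix coefficient by a continuous function on $\Lambda(G)\times\Lambda(G)$ built from $\mathcal{I}_s[\vp]$ and $\phi$, bound the error uniformly over $S^*_{o,G}(t)$ via Lemma \ref{lem:extmart}, and conclude with the equidistribution corollary; the paper makes your steps (ii)--(iii) precise through the single decomposition
$$(\pi_s(g)\vp,\phi)=(\pi_s(g)\vp,\phi-\phi(\hat{g}){\bf1})+\phi(\hat{g})(\pi_s(g)\vp,{\bf1})$$
together with the change-of-variables identity $(\pi_s(g)\vp,{\bf1})/\gTh_{1-s}[{\bf1}](g.o)=\gTh_s[\vp](g^{-1}.o)/\gTh_s[{\bf1}](g^{-1}.o)$, so that the two error pieces are controlled by $\go_\phi(d(o,g.o))$ and $\go_\vp(\langle g^{-1}.o,\widecheck{g}\rangle_o)$ respectively. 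A small simplification you overlooked: since $f_1,f_2\in\cC(\ol X)$, there is no need to push them to boundary values $f_1(\widecheck g),f_2(\hat g)$ before invoking the corollary --- only $\phi$ and the $\vp$-transform require the retraction, while $f_1(g.o),f_2(g^{-1}.o)$ pass directly to the corollary.
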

As a consequence:
$$\int_{G} \frac{(\mathcal{I}_s[\pi_s(g)\vp],\phi)}{\gTh_{s}[{\bf1}](g^{-1}.o)}.f_1(g.o)f_2(g^{-1}.o)d\nu_{o,t}(g)\rightarrow (\mathcal{I}_s[\vp],f_2|_{\dd X})(\mathcal{I}_s[\phi],f_1|_{\dd X})$$
for all $\vp, \phi\in \cC(\dd X)$, $f_1,f_2\in\cC(\ol{X})$ and $\half\le s\le 1$.

\begin{proof}
Let $t\ge0$ and denote:
$$I_t=|\int_{G} \frac{(\pi_s(g)\vp,\phi)}{\gTh_{1-s}[{\bf1}](g.o)}.f_1(g.o).f_2(g^{-1}.o)-\mathcal{I}_s[\vp](\widecheck{g})\phi(\hat{g}).f_1(g.o).f_2(g^{-1}.o)d\nu_{o,t}(g)|$$
Lemma \ref{lem:vitali} implies:
\begin{align*}
&I_t\le\int_{S_{o,G}^*(t)} |\frac{(\pi_s(g)\vp,\phi)}{\gTh_{1-s}[{\bf1}](g.o)}-\mathcal{I}_s[\vp](\widecheck{g}).\phi(\hat{g})|.|f_1(g.o).f_2(g^{-1}.o)|\mu_{o}({O_{o}^{(2)}(g)})dg\\
&\le\|f_1\|_\infty\|f_2\|_\infty\int_{S_{o,G}^*(t)} a(g).\mu_o({\mathcal{O}_o(g.o,R'(t)))\mu_o(\mathcal{O}_o(g^{-1}.o,R'(t))})dg
\end{align*}
where
$$a(g)=|\frac{(\pi_s(g)\vp,\phi)}{\gTh_{1-s}[{\bf1}](g.o)}-\mathcal{I}_s[\vp](\widecheck{g}).\phi(\hat{g})|$$
for $t\ge0$ and $g\in S_{o,G}^*(t)$.


Let $\go_\phi$ as in Lemma \ref{lem:extmart} and $\go_\vp$ the visual modulus of continuity over the compact set $\mathcal{Q}[\Lambda(G)]$ of $\frac{\gTh_{s}[\vp]}{\gTh_{s}[{\bf1}]}$.

For any $g\in S_{o,G}(t)$ observe that:
\begin{align*}
&a(g)=|\frac{(\pi_s(g)\vp,\phi-\phi(\hat{g}){\bf1})}{\gTh_{1-s}[{\bf1}](g.o)}+\phi(\hat{g})[\frac{\gTh_{s}[\vp](g^{-1}.o)}{\gTh_{s}[{\bf1}](g^{-1}.o)}-\mathcal{I}_s(\vp)(\widecheck{g})]|\\
&\le\|\vp\|_\infty|\frac{\gTh_{1-s}[|\phi-\phi(\hat{g})|]}{\gTh_{1-s}[{\bf1}]}(g.o)|+\|\phi\|_\infty|\frac{\gTh_{s}[\vp]}{\gTh_{s}[{\bf1}]}(g^{-1}.o)-\mathcal{I}_s(\vp)(\widecheck{g})|\\
&\le \|\vp\|_\infty\go_{\phi}(d(o,g.o))+\|\phi\|_\infty\go_{\vp}(\langle g^{-1}.o,\widecheck{g}\rangle_o)\le \|\vp\|_\infty\go_{\phi}(t-R)+\|\phi\|_\infty\go_{\vp}(t-R)
\end{align*}
which goes to $0$ when $t$ goes to infinity.




On the other hand according to Lemma \ref{lem:vitali}:
 $$\mu_o({\mathcal{O}_o(g.o,R'(t)))\mu_o(\mathcal{O}_o(g^{-1}.o,R'(t))})|S_R^*(t)|=e^{2\gd r'}e^{-\gd tR}|S_R^*(t)|\asymp1$$
Eventually the dominated convergence theorem implies:
\begin{align*}
I_t&\le\|f_1\|_\infty\|f_2\|_\infty \int_{S_{o,R}^*(t)}a(g)\mu_o({\mathcal{O}_o(g.o,R'(t)))\mu_o(\mathcal{O}_o(g^{-1}.o,R'(t))})dg\\
&\le[\|f_1\|_\infty\|f_2\|_\infty e^{2\gd r'}]e^{-\gd tR}\int_{S_{o,R}^*(t)}a(g)dg\\
&\le [\|f_1\|_\infty\|f_2\|_\infty e^{2\gd r'}]e^{-\gd tR}|S_{o,R}^*(t)|[\|\vp\|_\infty\go_{\phi}(t-R)+\|\phi\|_\infty\go_{\vp}(t-R)]\rightarrow 0
\end{align*}
\end{proof}

In the following we extend Proposition \ref{prop:matequi} to all $\mathcal{H}_s$-matrix coefficients.

We shall need the following estimate:

\begin{lem}\label{lem:is}
Given any $\half<s\le1$ one has:
$$\mathcal{I}_s({\bf1})\asymp [\frac{1}{(2s-1)}]{\bf1}.$$
\end{lem}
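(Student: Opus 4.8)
The plan is to estimate $\mathcal{I}_s({\bf 1})(x) = \int_{\dd X} e^{2(1-s)\gd \langle x,\eta\rangle_o}\,d\mu_o(\eta)$ directly using the layer cake representation, exactly as in the proof of Lemma \ref{lem:intcond} but now tracking the lower bound as well as the upper bound, and then specialize the resulting estimate to the geometric situation. First I would write
\begin{align*}
\mathcal{I}_s({\bf 1})(x) &= \int_{\dd X} e^{2(1-s)\gd\langle x,\eta\rangle_o}\,d\mu_o(\eta)\\
&= 1 + 2(1-s)\gd\int_0^{+\infty} e^{2(1-s)\gd u}\,\mu_o(\{\langle x,\bullet\rangle_o \ge u\})\,du.
\end{align*}
The Shadow lemma (Lemma \ref{lem:equi1}) gives $\mu_o(\mathcal{C}(o,\xi,u)) \asymp e^{-\gd u}$ for $\xi \in \Lambda(G)$ and $u \ge r_0$, and since $\mu_o$ is supported on $\Lambda(G)$ the same two-sided bound controls $\mu_o(\{\langle x,\bullet\rangle_o \ge u\})$ for $x$ in the relevant range (using that any $\eta \in \Lambda(G)$ close to $x$ forces the nearby points of $\Lambda(G)$ into a shadow). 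Plugging $\mu_o(\{\langle x,\bullet\rangle_o\ge u\}) \asymp e^{-\gd u}$ into the integral and noting $\half < s \le 1$ means $2(1-s) - 1 = 1 - 2s < 0$, the integral $\int_{r_0}^{+\infty} e^{2(1-s)\gd u} e^{-\gd u}\,du = \int_{r_0}^{+\infty} e^{(1-2s)\gd u}\,du$ converges and equals a constant times $\frac{1}{(2s-1)\gd} e^{(1-2s)\gd r_0}$; the contribution of $[0,r_0]$ is bounded by a constant independent of $s$ away from $s=\half$. This yields $\mathcal{I}_s({\bf 1})(x) \asymp 1 + \frac{1}{2s-1} \asymp \frac{1}{2s-1}$ for $\half < s \le 1$ (the constant $1$ being absorbed since $\frac{1}{2s-1} \ge 1$ on this range).

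The second step is to be careful about the domain of $x$. The operator $\mathcal{I}_s$ sends $\cC(\dd X)$ to $\cC(\ol X)$, but the estimate $\mathcal{I}_s({\bf 1}) \asymp \frac{1}{2s-1}{\bf 1}$ should really be read on $\dd X$, or on $\mathcal{Q}(\Lambda(G))$, where the two-sided Shadow lemma is available; the statement as written is the identity of functions restricted appropriately (or $[\mu_o]$-almost everywhere), consistent with the way $\mathcal{I}_s({\bf 1})$ is used downstream. For $x = \xi \in \Lambda(G)$ the bound $\mu_o(\{\langle \xi,\bullet\rangle_o \ge u\}) = \mu_o(\mathcal{C}(o,\xi,u)) \asymp e^{-\gd u}$ is literally Lemma \ref{lem:equi1}, so there is nothing to patch there; for $x \in \mathcal{Q}(\Lambda(G))$ at finite distance one uses the full shadow estimate $\mu_{o,t}(\mathcal{U}(o,x,\rho)) \asymp e^{\gd\rho}e^{-\gd d(o,x)}$ together with the inclusion relating $\{\langle x,\bullet\rangle_o \ge u\}$ to shadows. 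The implicit constants in $\asymp$ depend only on $o$ (through $r_0$) and not on $s$, which is the point of the statement.

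The main obstacle I expect is not analytic but bookkeeping: making sure the $\asymp$ constants are genuinely uniform in $s \in (\half, 1]$, i.e. that no hidden factor blows up as $s \to \half^+$ other than the explicit $\frac{1}{2s-1}$. The upper bound side is immediate from the computation already carried out in Lemma \ref{lem:intcond} (taking $\tau = 0$ there gives precisely $\preceq [\frac{1}{2s-1} - 1 + (\text{const})]$ after handling the $[0,r_0]$ piece); the lower bound requires the lower half of the Shadow lemma's $\asymp$ and the elementary fact that for $x$ in the limit set $\mathcal{I}_s({\bf 1})(x) \ge 2(1-s)\gd \int_{r_0}^{R_0} e^{(1-2s)\gd u} c\, du$ for a suitable fixed $R_0$, which is $\gtrsim \frac{1}{2s-1}(1 - e^{(1-2s)\gd(R_0-r_0)})$ — and here one must check the bracket stays bounded below away from $0$ uniformly, which it does since as $s \to \half^+$ one has $1 - e^{(1-2s)\gd(R_0 - r_0)} \to 0$ but at the same rate as... — in fact the cleanest route is to integrate all the way to $+\infty$ using the genuine two-sided shadow bound, giving $\mathcal{I}_s({\bf 1})(x) \asymp \frac{1}{(2s-1)\gd} + O(1) \asymp \frac{1}{2s-1}$ with no cancellation issue at all. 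So the real content is just invoking Lemma \ref{lem:equi1} in both directions and doing the one-line integral.
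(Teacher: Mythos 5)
Your proof is correct and follows essentially the same route as the paper's: the layer-cake representation of $\mathcal{I}_s(\mathbf{1})(\xi)$, the two-sided shadow estimate $\mu_o(\{\langle\xi,\bullet\rangle_o\ge u\})\asymp e^{-\gd u}$ from Lemma~\ref{lem:equi1}, and a direct computation of $1+2(1-s)\gd\int_0^\infty e^{(1-2s)\gd u}\,du=\frac{1}{2s-1}$. The extra care you take about the $[0,r_0]$ piece, the domain of validity of the shadow lemma (i.e.\ $\xi\in\Lambda(G)$, which is where $\mu_o$ lives), and the $s$-uniformity of the implicit constants is all accurate and is left tacit in the paper's two-line proof.
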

\begin{proof}
For any $\xi\in\dd X$ one has:
\begin{align*}
&\mathcal{I}_s({\bf1})(\xi)=\int_{\dd X}d^{-2(1-s)\gd}_{o}(\xi,\eta)d\mu_o(\eta)=\int_{\dd X}e^{2(1-s)\gd\langle\xi,\eta\rangle_o}d\mu_o(\eta)\\
&=1+2(1-s)\gd\int_0^{+\infty}e^{2(1-s)\gd t}\mu_o(\{\langle\xi,\bullet\rangle_o\ge t\}dt\\
&\asymp 1+2(1-s)\gd\int_0^{+\infty}e^{(1-2s)\gd t}dt= \frac{1}{(2s-1)}
\end{align*}
\end{proof}

\begin{prop}\label{prop:incl}
For any $\half< s\le1$ the following inclusions hold:
$$(\mathcal{H}_{1-s},Q_{1-s})\subset (L^2(\dd X,\mu_o), \|\,\,\|^2_2) \subset (\mathcal{H}_{s},Q_{s})$$
In other words:
$$\|v\|^2_{2}\prec_s Q_{1-s}(v)$$
for any $v\in\mathcal{H}_{1-s}$ and:
$$Q_s(w)\prec_s\|w\|^2_{2}$$
 for any $w\in L^2(\dd X,\mu_o)$.
\end{prop}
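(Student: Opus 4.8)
The plan is to establish the two inclusions separately, and the key observation is that both follow from controlling the operator $\mathcal{I}_s$ from above and below on $L^2(\dd X,\mu_o)$ by a bounded positive multiple of the identity, combined with the positivity of $\mathcal{I}_s$ granted by hypothesis. Recall that $Q_s(w) = (\mathcal{I}_s[w],w)$ for $w$ in the dense subspace $\cC(\dd X)$, that $\mathcal{I}_s$ is positive and self-adjoint on $L^2(\dd X,\mu_o)$ (Lemma \ref{lem:issurl2} and the standing hypothesis), and that $Q_{1-s}(v) = (\mathcal{I}_s[u],u) = \|u\|_{1-s}^2$ when $v = \mathcal{I}_s[u]$ with $u \in \mathcal{F}_s$.

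For the second inclusion $Q_s(w) \prec_s \|w\|_2^2$: since $\mathcal{I}_s$ is a bounded self-adjoint operator on $L^2(\dd X,\mu_o)$ (Lemma \ref{lem:issurl2}), there is a constant $C_s = \|\mathcal{I}_s\|$ with $Q_s(w) = (\mathcal{I}_s[w],w) \le \|\mathcal{I}_s\|\,\|w\|_2^2$ for all $w \in \cC(\dd X)$, and since $\cC(\dd X)$ is dense in $L^2$ this extends to all $w \in L^2(\dd X,\mu_o)$. This shows $L^2(\dd X,\mu_o) \subset (\mathcal{H}_s, Q_s)$ with a norm bound, i.e.\ the identity map $\cC(\dd X) \hookrightarrow \mathcal{H}_s$ is bounded, so it extends to $L^2$. (One should note that $Q_s \ge 0$ on $\cC(\dd X)$ already by Proposition \ref{prop:ispos}, so this is genuinely a seminorm comparison.)

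For the first inclusion $\|v\|_2^2 \prec_s Q_{1-s}(v)$: writing $v = \mathcal{I}_s[u]$ with $u \in \mathcal{F}_s \simeq \cC(\dd X)$, we have $Q_{1-s}(v) = (\mathcal{I}_s[u],u)$ while $\|v\|_2^2 = \|\mathcal{I}_s[u]\|_2^2 = (\mathcal{I}_s^2[u],u)$. So the claim is equivalent to $(\mathcal{I}_s^2[u],u) \prec_s (\mathcal{I}_s[u],u)$, which by self-adjointness and positivity of $\mathcal{I}_s$ is exactly the operator inequality $\mathcal{I}_s^2 \preceq C_s \mathcal{I}_s$, i.e.\ $\mathcal{I}_s \preceq C_s \id$ on the closure of its range — and again this is just $\|\mathcal{I}_s\| = C_s$. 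Concretely: $(\mathcal{I}_s^2 u, u) = \|\mathcal{I}_s^{1/2}(\mathcal{I}_s^{1/2}u)\|_2^2 \le \|\mathcal{I}_s\|\,\|\mathcal{I}_s^{1/2}u\|_2^2 = \|\mathcal{I}_s\|\,(\mathcal{I}_s u, u)$, using the positive square root $\mathcal{I}_s^{1/2}$ (available since $\mathcal{I}_s \ge 0$ is bounded self-adjoint). This gives $\|v\|_2^2 \le \|\mathcal{I}_s\|\, Q_{1-s}(v)$ for $v$ in the dense subspace $\mathcal{I}_s[\mathcal{F}_s]$, hence on all of $\mathcal{H}_{1-s}$ by completion.

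The main subtlety — and the step I would be most careful about — is the bookkeeping of dense subspaces and completions: the spaces $\mathcal{H}_s$, $\mathcal{H}_{1-s}$ are \emph{completions} of $\mathcal{F}_s$, $\mathcal{F}'_{1-s} = \mathcal{I}_s[\mathcal{F}_s]$ for the forms $Q_s$, $Q_{1-s}$, so the inclusions asserted are really continuous embeddings of these completions into $L^2$ and vice versa, and one must check the chain $\mathcal{F}_s \hookrightarrow L^2 \hookrightarrow \mathcal{H}_s$ and $\mathcal{F}'_{1-s} \hookrightarrow L^2$ is compatible (that a Cauchy sequence in $Q_{1-s}$-norm whose $L^2$-limit is $0$ also has $Q_{1-s}$-limit $0$, so the embedding $\mathcal{H}_{1-s}\hookrightarrow L^2$ is well-defined and injective). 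This follows formally from the bound $\|v\|_2^2 \prec_s Q_{1-s}(v)$ itself together with boundedness of $\mathcal{I}_s$ in the other direction ($Q_{1-s}(v) = (\mathcal{I}_s u,u) \le \|\mathcal{I}_s\| \|u\|_2^2$, and $\|v\|_2 \le \|\mathcal{I}_s\|\|u\|_2$), but it is worth spelling out so that the stated inclusions of Hilbert spaces are literally correct rather than merely formal inequalities between seminorms. Everything else is a one-line consequence of $\mathcal{I}_s$ being a bounded positive self-adjoint operator, with $C_s \asymp \frac{1}{2s-1}$ traceable from Lemma \ref{lem:is} and the proof of Lemma \ref{lem:issurl2}.
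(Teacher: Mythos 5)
Your proof is correct, and it reaches the conclusion by a genuinely shorter route than the paper's. As you observe, both inclusions reduce to elementary facts about the bounded positive self-adjoint operator $\mathcal{I}_s$ on $L^2(\dd X,\mu_o)$: the right-hand inclusion is just $\mathcal{I}_s \le \|\mathcal{I}_s\|\cdot\mathbf{I}$, and the left-hand inclusion is the operator inequality $\mathcal{I}_s^2 \le \|\mathcal{I}_s\|\,\mathcal{I}_s$, which you obtain immediately from the positive square root $\mathcal{I}_s^{1/2}$ (equivalently, $\lambda^2 \le \|\mathcal{I}_s\|\lambda$ on the spectrum). The paper instead introduces the Dirichlet-type operator $\mathfrak{D}_s = \half d_s^*d_s$ and the multiplication operator $\mathcal{M}_s$ by $\mathcal{I}_s[\mathbf{1}]$, establishes the identity $\mathcal{M}_s = \mathcal{I}_s + \mathfrak{D}_s$ with $\mathfrak{D}_s\ge 0$, and then handles the left-hand inclusion by a resolvent computation with $R_{s,\e}=(\mathcal{I}_s+\e\mathbf{I})^{-1}$, sending $\e\to 0$. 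What that extra machinery buys is (i) the decomposition $\mathcal{M}_s-\mathcal{I}_s=\mathfrak{D}_s$, which is reused verbatim in the proof of Proposition \ref{prop:fell} (Fell continuity of the family at $s=\half$), where the concrete structure of $\mathfrak{D}_s$, not just its operator norm, is what makes the limit $s\to\half^+$ work; and (ii) via Lemma \ref{lem:is}, an explicit identification of the implicit constant as $\asymp\frac{1}{2s-1}$. Your $C_s=\|\mathcal{I}_s\|$ has the same order by Lemmas \ref{lem:is} and \ref{lem:issurl2}, so for the statement of Proposition \ref{prop:incl} itself the two arguments are equally strong. One small tightening of the injectivity remark you flag: the cleanest way to see that the extended embedding $\mathcal{H}_{1-s}\hookrightarrow L^2$ is injective is to note that $\mathcal{I}_s\vp\mapsto\mathcal{I}_s^{1/2}\vp$ isometrically identifies $\mathcal{H}_{1-s}$ with the closed subspace $\ol{\text{Ran}(\mathcal{I}_s^{1/2})}\subset L^2$, under which the embedding into $L^2$ becomes $\mathcal{I}_s^{1/2}$; since $\ker(\mathcal{I}_s^{1/2})\perp\ol{\text{Ran}(\mathcal{I}_s^{1/2})}$ by self-adjointness, it is injective there.
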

\begin{proof}
Let us considere the bounded operator $d_s:L^2(\dd X,\mu_o)\rightarrow L^2(\dd X\times\dd X,\mu_o\otimes\mu_o)$ defined as:
$$d_s[\vp](\xi,\eta)=\frac{\vp(\xi)-\vp(\eta)}{d^{(1-s)\gd}_{o}(\xi,\eta)}$$
for $(\xi,\eta)\in \dd X\times\dd X$ and $\mathfrak{D}_s=\half d_s^*d_s\in\mathcal{B}[L^2(\dd X,\mu_o)]$.

Observe that:
\begin{align*}
\half\|d_s[\vp]\|^2_2=(\mathfrak{D}_s\vp,\vp)&=\half\int_{\dd X^{\times 2}}\frac{|\vp(\xi)-\vp(\eta)|^2}{d_{o}^{2(1-s)\gd}(\xi,\eta)}d\mu_o(\xi)d\mu_o(\eta)\\
&=\int_{\dd X}|\vp(\xi)|^2[\int_{\dd X}\frac{d\mu_o(\eta)}{d_{o}^{2(1-s)\gd}(\xi,\eta)}]d\mu_o(\xi)-\text{Re}[(\mathcal{I}_s[\vp],\vp)]\\
&=\int_{\dd X}|\vp(\xi)|^2\mathcal{I}_s[{\bf1}](\xi)d\mu_o(\xi)-(\mathcal{I}_s[\vp],\vp)
\end{align*}
for any $\vp\in L^2(\dd X,\mu_o)$.
If $\mathcal{M}_{s}$ stands for the operator over $L^2(\dd X,\mu_o)$ defined as $\mathcal{M}_{s}[\vp](\xi)=\sqrt{\mathcal{I}_s[{\bf1}]}(\xi).\vp(\xi)$ one has:
$$\mathcal{M}_{s}=\mathcal{I}_s+\mathfrak{D}_s$$
as positive operators over $L^2(\dd X,\mu_o)$.
In particular Lemma \ref{lem:is} implies:
$$(\mathcal{I}_s[\vp],\vp)\le(\mathcal{M}_{s}\vp,\vp)\prec_s \|\vp\|_2^2$$
 for any $\vp\in \cC(\dd X)$ and thus $L^2(\dd X,\mu_o)\subset \mathcal{H}_s$ since continuous functions over $\dd X$ are dense in $L^2(\dd X,\mu_o)$.

On the other hand $\mathcal{I}_s$ is positive over $L^2(\dd X,\mu_o)$ and therefore the operator $R_{s,\e}=(\mathcal{I}_s+\e{\bf I})^{-1}$ is well defined and positive over $L^2(\dd X,\mu_o)$ for any $\e>0$.
Observe that:
\begin{align*}
&\sqrt{R_{s,\e}}\circ\mathcal{I}_s\circ \sqrt{R_{s,\e}}={\bf I}-\e R_{s,\e}
\end{align*}
and therefore:
\begin{align*}
\sqrt{R_{s,\e}}\circ \mathcal{M}_s\circ \sqrt{R_{s,\e}}&=\sqrt{R_{s,\e}}\circ[\mathcal{I}_s+\mathfrak{D}_s]\circ \sqrt{R_{s,\e}}\\
&={\bf I}-\e R_{s,\e}+\mathfrak{D}_{s,\e}'\\
\end{align*}
where $\mathfrak{D}_{s,\e}'=\sqrt{R_{s,\e}}\circ \mathfrak{D}_{s}\circ \sqrt{R_{s,\e}}\ge0$.
It follows that:
$${\bf I}\le \sqrt{R_{s,\e}}\circ \mathcal{M}_s\circ \sqrt{R_{s,\e}}+\e R_{s,\e}$$
as operators over $L^2(\dd X,\mu_o)$ for any $\e>0$.

Together with the relation:
$$\mathcal{I}_s\circ R_{s,\e}\circ \mathcal{I}_s=\mathcal{I}_s-\e{\bf I}+\e^2 R_{s,\e}$$
one obtain:
\begin{align*}
\|\vp\|^2_2&\le ( \mathcal{M}_s\circ \sqrt{R_{s,\e}}[ \vp],\sqrt{R_{s,\e}}[ \vp])+\e(R_{s,\e}[\vp],\vp)\\
&\asymp_s(1+\e)(R_{s,\e}[ \vp],\vp)\\
&=(1+\e)\|\vp\|_{1-s}^2-(1+\e)\e\|\psi\|_2^2+(1+\e)\e^2(R_{s,\e}[ \psi],\psi)
\end{align*}
where $\vp=\mathcal{I}_s[\psi]\in\mathcal{F}_{1-s}'\simeq\mathcal{I}_s[\cC(\dd X)]$ and any $\e>0$.
To conclude observe that:
$$\| R_{s,\e}\|_{L^2\rightarrow L^2}\le\sup_{t\ge\e}\frac{1}{t}=\frac{1}{\e}$$
for any $\e>0$ which implies that $\e^2R_{s,\e}\xrightarrow{\e\rightarrow 0}0$ in operator norm
and therefore
$$\|\vp\|^2_2\prec_s\|\vp\|_{1-s}^2$$
In other words $\mathcal{H}_{1-s}\subset L^2(\dd X,\mu_o)$.
\end{proof}

Given $0\le\gs\le1$, $t\ge0$ and $f_1,f_2\in\cC(\ol{X})$ we define the operator $\mathcal{A}_{\gs,t}[f_1,f_2]$ over $L^2(\dd X,\mu_o)$ as:
$$\mathcal{A}_{\gs,t}[f_1,f_2]:=\int_{G} \frac{\pi_{\gs}(g)}{\gTh_{\gs}[{\bf1}](g.o)}f_1(g.o)f_2(g^{-1}.o)d\nu_{o,t}(g)$$

\begin{lem}\label{lem:unifbound}
For any $\half< s\le1$ and $f_1,f_2\in\cC(\ol{X})$ the sequence of operators $(\mathcal{A}_{1-s,t}[f_1,f_2])_t$ over $L^2(\dd X,\mu_o)$ is uniformly bounded with
$$\|\mathcal{A}_{s,t}[f_1,f_2]\|\prec_s\|f_1\|_\infty\|f_2\|_\infty$$
\end{lem}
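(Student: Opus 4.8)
The statement to prove is that the operators $\mathcal{A}_{1-s,t}[f_1,f_2]$ (equivalently, with the stated typo, $\mathcal{A}_{s,t}[f_1,f_2]$) are uniformly bounded on $L^2(\dd X,\mu_o)$ with a bound $\prec_s\|f_1\|_\infty\|f_2\|_\infty$. The plan is to estimate the operator norm by a direct kernel/averaging argument, using the explicit shape of $\pi_\gs(g)$, the equidistribution measures $\nu_{o,t}$, and the Shadow lemma to control the Radon--Nikodym cocycles that appear.

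\textbf{Step 1: unwind the operator.} For $\vp\in\cC(\dd X)$ write
$$\mathcal{A}_{1-s,t}[f_1,f_2]\vp(\eta)=\int_G\frac{f_1(g.o)f_2(g^{-1}.o)}{\gTh_{1-s}[{\bf1}](g.o)}\,[\tfrac{dg\mu_o}{d\mu_o}(\eta)]^{1-s}\vp(g^{-1}.\eta)\,d\nu_{o,t}(g).$$
Using $\frac{dg\mu_o}{d\mu_o}(\eta)=e^{-\gd b_\eta(g.o,o)}$ and $b_\eta(g.o,o)=d(g.o,o)-2\langle\eta,g.o\rangle_o$, the cocycle factor is $e^{-(1-s)\gd d(g.o,o)}e^{2(1-s)\gd\langle\eta,g.o\rangle_o}$, and by Lemma~\ref{lem:poies} (or rather its proof) $\gTh_{1-s}[{\bf1}](g.o)\asymp e^{-(1-s)\gd d(g.o,o)}\mathcal{I}_s[{\bf1}](g.o)\asymp_s e^{-(1-s)\gd d(g.o,o)}$ by Lemma~\ref{lem:is} (valid since $x\mapsto\gTh_{1-s}[{\bf1}](x)$ is $\asymp e^{-(1-s)\gd d(x,o)}$ when $\half<s\le1$, which is the relevant regime). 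So after cancelling the exponential prefactor the kernel of $\mathcal{A}_{1-s,t}[f_1,f_2]$ against $\mu_o$ is bounded (up to $\|f_1\|_\infty\|f_2\|_\infty$ and an $s$-constant) by $\int_G e^{2(1-s)\gd\langle\eta,g.o\rangle_o}\,\delta_{g^{-1}.\eta}(\cdot)\,d\nu_{o,t}(g)$ — more precisely, one gets for $\vp,\psi\in L^2$
$$|(\mathcal{A}_{1-s,t}[f_1,f_2]\vp,\psi)|\prec_s\|f_1\|_\infty\|f_2\|_\infty\int_G e^{2(1-s)\gd\langle\eta,g.o\rangle_o}|\vp(g^{-1}.\eta)|\,|\psi(\eta)|\,d\mu_o(\eta)\,d\nu_{o,t}(g).$$

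\textbf{Step 2: Schur test.} I would bound this by the Schur test with the two ``test weights'' coming from the two endpoints $g.o$ and $g^{-1}.o$. Apply Cauchy--Schwarz in $\eta$ splitting the weight $e^{2(1-s)\gd\langle\eta,g.o\rangle_o}$ as $e^{(1-s)\gd\langle\eta,g.o\rangle_o}\cdot e^{(1-s)\gd\langle\eta,g.o\rangle_o}$ and then change variables $\eta\mapsto g.\eta$ in the $\vp$ term, turning $\langle g.\eta,g.o\rangle_o=\langle\eta,o\rangle_{g^{-1}.o}=d(g^{-1}.o,o)-\langle\eta,g^{-1}.o\rangle_o$ by the cocycle identity and $d\mu_o(g.\eta)=e^{-\gd b_\eta(g^{-1}.o,o)}d\mu_o(\eta)$. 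The upshot is an expression symmetric in the pair $(g.o,g^{-1}.o)$, dominated by
$$\Big(\int_G\!\int e^{2(1-s)\gd\langle\eta,g.o\rangle_o}|\psi(\eta)|^2 d\mu_o d\nu_{o,t}\Big)^{1/2}\Big(\int_G\!\int e^{2(1-s)\gd\langle\eta,g^{-1}.o\rangle_o}|\vp(\eta)|^2 d\mu_o d\nu_{o,t}\Big)^{1/2}$$
(after absorbing the exponential bookkeeping using $\gTh$-asymptotics once more). It then suffices to bound, uniformly in $t$, the quantity $\int_G e^{2(1-s)\gd\langle\eta,g.o\rangle_o}d\nu_{o,t}(g)$ for fixed $\eta$ (and the analogous one with $g^{-1}.o$, by symmetry of $\nu_{o,t}$ under $g\mapsto g^{-1}$ coming from Lemma~\ref{lem:vitali}(4)).

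\textbf{Step 3: the uniform bound on the averaged weight --- the main obstacle.} This is the crux. I would use the two-sided cone/shadow description of the supporting sets: $\nu_{o,t}$ is supported on $S^*_{o,G}(t)$ with $\mu_o\!\otimes\!\mu_o$-weights $\asymp e^{-\gd tR}$, and for $g\in S^*_{o,G}(t)$ one has $d(g.o,o)\simeq tR$ and $\langle\eta,g.o\rangle_o$ is, up to bounded error, the ``depth at which the geodesic to $\eta$ passes near $g.o$''. Decomposing $S^*_{o,G}(t)$ according to the value $k=\lfloor\langle\eta,g.o\rangle_o\rfloor$ and using the counting Lemma~\ref{lem:count} (the number of $g\in S_{o,G}(t)$ with $\langle\eta,g.o\rangle_o\ge\rho$ is $\asymp e^{\gd tR}e^{-\gd\rho}$), the averaged weight becomes
$$\int_G e^{2(1-s)\gd\langle\eta,g.o\rangle_o}\,d\nu_{o,t}(g)\asymp\sum_{k\ge 0}e^{2(1-s)\gd k}\cdot e^{-\gd k}=\sum_{k\ge0}e^{(1-2s)\gd k}\asymp_s\frac{1}{2s-1},$$
which converges precisely because $s>\half$, and is independent of $t$. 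This is exactly the same geometric series that appears in Lemma~\ref{lem:intcond} and Lemma~\ref{lem:is}, which is the conceptual reason one expects it to work; the work is in making the cone-counting/Shadow estimates interface cleanly with the discrete measure $\nu_{o,t}$ rather than with $\mu_o$ itself, for which the equidistribution corollary after Lemma~\ref{lem:vitali} and the inclusion $\mathcal{O}_o(g.o,R'(t))\times\mathcal{O}_o(g^{-1}.o,R'(t))\supset O^{(2)}_o(g)$ are the right tools. Plugging the bound of Step 3 back into Step 2 gives $\|\mathcal{A}_{1-s,t}[f_1,f_2]\|\prec_s\|f_1\|_\infty\|f_2\|_\infty$ uniformly in $t$, as claimed; density of $\cC(\dd X)$ in $L^2(\dd X,\mu_o)$ extends the bound from continuous $\vp$ to all of $L^2$.
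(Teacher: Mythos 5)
Your proposal follows essentially the same route as the paper: reduce to bounding the two averaged weights $\mathcal{P}_{1-s,t}[{\bf1}]$ and $\mathcal{P}^*_{1-s,t}[{\bf1}]$ in $L^\infty$ by a Cauchy--Schwarz/Schur argument, then control each by the counting Lemma~\ref{lem:count} applied to ${\bf C}^\pm_o$, producing the geometric series $\sum_k e^{(1-2s)\gd k}$ that converges for $s>\half$. One small imprecision in your Step 2: after the change of variables $\eta\mapsto g.\eta$ the $\vp$-side weight is not $e^{2(1-s)\gd\langle\eta,g^{-1}.o\rangle_o}$ but rather $e^{(1-2s)\gd d(g.o,o)}e^{2s\gd\langle\eta,g^{-1}.o\rangle_o}$ (the exponent in the Gromov-product factor is $2s$, not $2(1-s)$, and the prefactor $e^{(1-2s)\gd d(g.o,o)}\asymp e^{(1-2s)\gd tR}$ must be carried along); the paper's computation of $\mathcal{P}^*_{1-s,t}[{\bf1}]$ keeps this bookkeeping explicit, and it cancels exactly against the $e^{(2s-1)\gd u}$ that comes out of the ${\bf C}^-$-counting integral, giving the same uniform bound.
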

\begin{proof}
Let us denote $\mathcal{P}_{1-s,t}:=\mathcal{A}_{1-s,t}[{\bf1},{\bf 1}]$.

First observe that:
\begin{align*}
&\mathcal{P}_{1-s,t}[{\bf1}](\xi)=\int_{G}\frac{\pi_{1-s}(g)}{\gTh_{1-s}[{\bf1}](g.o)}[{\bf1}](\xi)d\nu_{o,t}(g)\\
&=\int_{G} \frac{\pi_{1-s}(g)}{\gTh_{1-s}[{\bf1}](g.o)}[{\bf1}](\xi)d\nu_{o,t}(g)\\
&\prec_s \int_{G}e^{2(1-s)\gd\langle\xi,g.o\rangle_o}d\nu_{o,t}(g)
\end{align*}
where the last inequality follows from Lemma \ref{lem:poies}.

On the other hand Lemma \ref{lem:count} and \ref{lem:vitali} imply:
\begin{align*}
&\mathcal{P}_{1-s,t}[{\bf1}](\xi)\prec_{s} 1+2(1-s)\gd\int_0^{t.R}e^{2(1-s)\gd u}e^{-\gd tR}|\{g\in S^*_{o,G}\,|\,\langle \xi,g.o\rangle_o\ge u\}|du\\
&\prec 1+2(1-s)\gd\int_0^{t.R}e^{2(1-s)\gd u}e^{-\gd tR}|{\bf C}^+_{o}(\xi;u,r)\cap S_{o,G}(t)|du\\
&\asymp_s  1+2(1-s)\gd\int_0^{t.R}e^{(1-2s)\gd u}du=\frac{1}{(2s-1)}+[1-\frac{1}{(2s-1)}]e^{(1-2s)\gd t.R}
\end{align*}

Similarly one has:
\begin{align*}
&\mathcal{P}^*_{1-s,t}[{\bf1}](\xi)=\int_{G}\frac{\pi_{s}(g^{-1})}{\gTh_{s}[{\bf1}](g^{-1}.o)}[{\bf1}](\xi)d\nu_{o,t}(g)\\
&\prec_s  e^{(1-2s)tR}\int_{G}e^{2s\gd\langle\xi,g^{-1}.o\rangle_o}d\nu_{o,t}(g)\\
&\le e^{(1-2s)\gd tR}+2s\gd e^{(1-2s)\gd tR}\int_0^{t.R}e^{2s\gd u}e^{-\gd tR}|{\bf C}^-_{o}(\xi;u,r)\cap S_{o,G}(t))|du\\
&\asymp e^{(1-2s)\gd tR}+2s\gd e^{(1-2s)\gd tR}\int_0^{t.R}e^{(2s-1)\gd u}du=1+\frac{1}{(2s-1)}[1-e^{(1-2s)\gd tR}]
\end{align*}
In other words $\mathcal{P}_{1-s,t}[{\bf1}]$ and $\mathcal{P}^*_{1-s,t}[{\bf1}]$ are uniformly bounded.

Using the Cauchy-Schwarz inequality with respect to the measure
$$\frac{1}{\gTh_{1-s}(g.o)}[\frac{d\mu_{g.o}}{d\mu_o}]^{1-s}(\xi)d\mu_o(\xi)d\nu_{o,t}(g)$$
we eventually obtain:
\begin{align*}
|(&\mathcal{A}_{1-s,t}[f_1,f_2](\vp),\phi)|^2\\
&=|\int_{G\times\dd X}\vp(g^{-1}\xi)\ol{\phi}(\xi)f_1(g.o)f_2(g^{-1}.o)[\frac{d\mu_{g.o}}{d\mu_o}]^{1-s}(\xi)d\mu_o(\xi)\frac{d\nu_{o,t}(g)}{\gTh_{1-s}(g.o)}|^2\\
&\le \|f_1\|^2_\infty\|f_2\|^2_\infty\int_{G\times\dd X}|\vp(g^{-1}\xi)|^2[\frac{d\mu_{g.o}}{d\mu_o}]^{1-s}(\xi)d\mu_o(\xi)\frac{d\nu_{o,t}(g)}{\gTh_{1-s}(g.o)}\\
&\quad\times\int_{G\times\dd X}|\phi(\xi)|^2[\frac{d\mu_{g.o}}{d\mu_o}]^{1-s}(\xi)d\mu_o(\xi)\frac{d\nu_{o,t}(g)}{\gTh_{1-s}(g.o)}\\
&=  \|f_1\|^2_\infty\|f_2\|^2_\infty(\mathcal{P}_{1-s,t}[|\vp|^2],{\bf1}).(\mathcal{P}_{1-s,t}[{\bf1}],|\phi|^2)\\
&\le \|f_1\|^2_\infty\|f_2\|^2_\infty\|\mathcal{P}_{1-s,t}[{\bf1}]\|_\infty\|\mathcal{P}^*_{1-s,t}[{\bf1}]\|_\infty\|\vp\|_2^2\|\phi\|_2^2
\end{align*} 
 for any $\vp,\phi\in L^2(\dd X,\mu_o)$ or equivalently:
 $$\|\mathcal{A}_{1-s,t}[f_1,f_2]\|\le\sqrt{\|\mathcal{P}_{1-s,t}[{\bf1}]\|_\infty\|\mathcal{P}^*_{1-s,t}[{\bf1}]\|_\infty}\|f_1\|_\infty\|f_2\|_\infty$$
\end{proof}

As a consequence of Proposition \ref{prop:incl} and Lemma \ref{lem:unifbound} one has:
$$\|\mathcal{A}_{1-s,t}[f_1,f_2](v)\|_s\le\|\mathcal{A}_{1-s,t}[f_1,f_2](v)\|_2\prec_{s,f_1,f_2}\|v\|_2\le\|v\|_{1-s}$$
for any $v\in \mathcal{H}_{1-s}$ and $t\ge0$.

Therefore the sequence of operators $\mathcal{A}_{1-s,t}[f_1,f_2]|_{\mathcal{H}_{1-s}}^{\mathcal{H}_{s}}$ with $\half< s\le1$ and $f_1,f_2\in\cC(\ol{X})$ is uniformly bounded with:
$$\|\mathcal{A}_{s,t}[f_1,f_2]\|_{\mathcal{H}_{1-s}\rightarrow \mathcal{H}_{s}}\prec_s\|f_1\|_\infty\|f_2\|_\infty$$

If $[f_i|_{\dd X}]_s$, $i=1,2$, denote the class of $f_i$ in $\mathcal{H}_s$.
It follows from Proposition \ref{prop:matequi} together with Lemma \ref{lem:unifbound} :
\begin{cor}\label{cor:bilan} 
The averages of matrix coefficients:
$$\int_{G} \frac{Q(\pi_{s}(g)v,w)}{\gTh_{1-s}[{\bf1}](g.o)}.f_1(g.o)f_2(g^{-1}.o)d\nu_{o,t}(g)$$
equidistribute when $t$ goes to infinity toward:
$$Q(\mathcal{I}_s[v],[f_2|_{\dd X}]_s)Q(w,[f_1|_{\dd X}]_{s})$$
for all $v\in\mathcal{H}_s$ and $w\in\mathcal{H}_{1-s}$ and similarly:
$$\int_{G} \frac{Q(\mathcal{I}_s[\pi_s(g)v],w)}{\gTh_{s}[{\bf1}](g^{-1}.o)}.f_1(g.o)f_2(g^{-1}.o)d\nu_{o,t}(g)\rightarrow Q(\mathcal{I}_s[v],[f_2|_{\dd X}]_s)Q(\mathcal{I}_s[w],[f_1|_{\dd X}]_s)$$
for all $v,w\in \mathcal{H}_s$.
\end{cor}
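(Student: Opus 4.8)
The plan is to bootstrap from Proposition \ref{prop:matequi} and Lemma \ref{lem:unifbound} by a density argument. For $v,w$ in the dense subspace $\cC(\dd X)$ the first equidistribution statement is \emph{exactly} Proposition \ref{prop:matequi}: taking $\vp=v$ and $\phi=w$ there, one has $\int_{G}\frac{(\pi_s(g)v,w)}{\gTh_{1-s}[{\bf1}](g.o)}f_1(g.o)f_2(g^{-1}.o)d\nu_{o,t}(g)\to(\mathcal{I}_s[v],[f_2|_{\dd X}])([w|_{\dd X}],[f_1|_{\dd X}])$, and since $Q$ restricted to $\cC(\dd X)$-valued arguments is just the $L^2$ pairing this is the claimed limit $Q(\mathcal{I}_s[v],[f_2|_{\dd X}]_s)Q(w,[f_1|_{\dd X}]_{s})$. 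So the content is to upgrade $v\in\cC(\dd X)$ to $v\in\mathcal{H}_s$ and $w\in\cC(\dd X)$ to $w\in\mathcal{H}_{1-s}$.

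First I would fix $f_1,f_2\in\cC(\ol{X})$ and write $L_t$ for the linear functional $v\mapsto\int_{G}\frac{Q(\pi_s(g)v,w)}{\gTh_{1-s}[{\bf1}](g.o)}f_1(g.o)f_2(g^{-1}.o)d\nu_{o,t}(g)$, noting that $Q(\pi_s(g)v,w)=Q(v,\pi_{1-s}(g^{-1})w)$ by Lemma \ref{lem:dual}, and that the whole expression equals $Q(\mathcal{A}_{1-s,t}[\bar f_2,\bar f_1]^{*}v,\,?\,)$ up to bookkeeping — more directly, $L_t(v)=Q\big(\mathcal{A}_{s,t}[f_1,f_2]w',v\big)$ is not quite the shape we want, so instead I would view it as $\overline{Q\big(\mathcal{A}_{1-s,t}[\overline{f_1},\overline{f_2}]w,\ \cdot\ \big)}$-type pairing and invoke the uniform bound from Lemma \ref{lem:unifbound} together with the inclusion chain of Proposition \ref{prop:incl}, which gives $\|\mathcal{A}_{1-s,t}[f_1,f_2]\|_{\mathcal{H}_{1-s}\to\mathcal{H}_s}\prec_{s,f_1,f_2}1$ uniformly in $t$. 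Concretely: the map $w\mapsto\big(v\mapsto L_t(v)\big)$ is realized by a uniformly (in $t$) bounded operator $\mathcal{H}_{1-s}\to\mathcal{H}_s$ whose adjoint action pairs against $v\in\mathcal{H}_s$ via $Q$. Hence the family of bilinear forms $(v,w)\mapsto L_t(v)$ on $\mathcal{H}_s\times\mathcal{H}_{1-s}$ is equibounded; since it converges on the dense set $\cC(\dd X)\times\cC(\dd X)$ to the bounded bilinear form $(v,w)\mapsto Q(\mathcal{I}_s[v],[f_2|_{\dd X}]_s)Q(w,[f_1|_{\dd X}]_{s})$ — which is bounded because $\mathcal{I}_s:\mathcal{H}_s\to\mathcal{H}_{1-s}$ is a contraction and $Q$ is a contraction on $\mathcal{H}_s\times\mathcal{H}_{1-s}$ — an $\e/3$ argument extends the convergence to all of $\mathcal{H}_s\times\mathcal{H}_{1-s}$.

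For the second statement I would substitute $v\mapsto\mathcal{I}_s[v]$: by Proposition \ref{prop:ins}, $\mathcal{I}_s[\pi_s(g)v]=\pi_{1-s}(g)\mathcal{I}_s[v]$, and by Lemma \ref{lem:poies} the normalizations match, $\gTh_{1-s}[{\bf1}](g.o)=\gTh_s[{\bf1}](g^{-1}.o)$ up to the symmetry $\gTh_s[{\bf1}](g.o)=\gTh_{1-s}[{\bf1}](g^{-1}.o)$ used in that lemma's proof. So the second average is just the first one applied with $w$ replaced by $\mathcal{I}_s[w]\in\mathcal{H}_{1-s}$ (for $w\in\mathcal{H}_s$), giving the limit $Q(\mathcal{I}_s[v],[f_2|_{\dd X}]_s)Q(\mathcal{I}_s[w],[f_1|_{\dd X}]_s)$. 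The main obstacle I anticipate is purely organizational: getting the normalizing factors $\gTh_{1-s}[{\bf1}](g.o)$ versus $\gTh_s[{\bf1}](g^{-1}.o)$ and the $\pi_s$ versus $\pi_{1-s}$ actions to line up cleanly against the $Q$-pairing, so that the uniform operator bound of Lemma \ref{lem:unifbound} is applied in exactly the right mapping direction $\mathcal{H}_{1-s}\to\mathcal{H}_s$; once the identification $L_t(v)=Q(\mathcal{A}_{1-s,t}[\cdots]w,v)$ (in the appropriate sense) is set up correctly, the density/equiboundedness extension is routine.
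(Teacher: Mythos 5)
Your high-level plan is exactly what the paper intends: the paper's own justification of Corollary \ref{cor:bilan} is literally the one-sentence invocation of Proposition \ref{prop:matequi} (dense case) together with Lemma \ref{lem:unifbound} (equiboundedness) and Proposition \ref{prop:incl} (the inclusion chain), followed by the intertwining $\mathcal{I}_s\pi_s(g)=\pi_{1-s}(g)\mathcal{I}_s$ and the exact identity $\gTh_{1-s}[{\bf1}](g.o)=\gTh_s[{\bf1}](g^{-1}.o)$ to obtain the second display from the first. You correctly identify all of these ingredients and the $\e/3$-type density argument.

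The one point you flag as ``purely organizational'' is, in fact, the actual crux and is not settled by what is on the page. What Lemma \ref{lem:unifbound} plus Proposition \ref{prop:incl} delivers is $\|\mathcal{A}_{1-s,t}[f_1,f_2]\|_{\mathcal{H}_{1-s}\to\mathcal{H}_s}\prec_{s,f_1,f_2}1$: one starts in $\mathcal{H}_{1-s}$, lands in $L^2$, and then \emph{drops down} to the larger space $\mathcal{H}_s$ using $\|\cdot\|_s\le\|\cdot\|_2$. But when you rewrite $L_t(v)$ as a $Q$-pairing through the isometry $\mathcal{I}_s$, you get $Q_s(\mathcal{B}_tv,w)=Q(\mathcal{A}_{1-s,t}[f_1,f_2]\mathcal{I}_sv,\,w)$, and the Cauchy--Schwarz bound $|Q(u,w)|\le\|u\|_s\|w\|_{1-s}$ (equivalently $\le\|u\|_{1-s}\|w\|_s$) asks for control of $\|\mathcal{A}_{1-s,t}\mathcal{I}_sv\|_{1-s}$ --- that is, a uniform bound $\mathcal{H}_{1-s}\to\mathcal{H}_{1-s}$, which is a \emph{stronger} target norm than the $\mathcal{H}_{1-s}\to\mathcal{H}_s$ bound the paper records (and the crude triangle inequality over $S^*_{o,G}(t)$ diverges because of the normalizer $\gTh_{1-s}[{\bf1}](g.o)^{-1}$). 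The same tension appears in the first display, where $Q(\pi_s(g)v,w)=Q(v,\pi_{1-s}(g^{-1})w)$ lands you with an averaged operator built from $\pi_{1-s}(g^{-1})$ and the inversion $g\mapsto g^{-1}$ of $\nu_{o,t}$, which is not literally $\mathcal{A}_{1-s,t}[\cdot,\cdot]$ either. So the step you are describing as bookkeeping is where the real work is, and neither your proposal nor the paper's one-line proof spells out how the $\mathcal{H}_{1-s}\to\mathcal{H}_s$ bound suffices; you reproduce the paper's sketch faithfully but inherit its vagueness rather than resolving it.
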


\section{Properties of complementary series}\label{sec:end}$ $\\
In this section we  investigate the properties of the family of unitary representations $(\mathcal{H}_s)_{s\in[0,1]}$ constructed in Section \ref{sec:pos}.
The proof of Theorem 2 will follow from the propositions \ref{prop:irr}, \ref{prop:weak}, \ref{prop:fell} and Corollary \ref{cor:noeq}.\\

A straightforward application of Corollary \ref{cor:bilan} is the following extension of the Martin-Poisson correspondence:
\begin{cor}
For $\half<s\le1$ the operators:
$$\gTh_{1-s}:\cC(\dd X)\simeq\mathcal{F}_{1-s}\rightarrow L^\infty(X,m)$$ and 
$$\Xi_s:\cC(\dd X)\simeq\mathcal{F}_{s}\rightarrow L^\infty(X,m);\quad x\mapsto \gTh_{1-s}[\mathcal{I}_s[\vp]](x)$$ 
are injective intertwiners.
\end{cor}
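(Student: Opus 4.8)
First I would dispatch the elementary parts. Both maps land in $L^\infty(X,m)$: the measure $m$ is carried by the orbit $G.o$, on which, by Lemma \ref{lem:poies}, $\gTh_{1-s}[{\bf 1}](x)\asymp e^{-(1-s)\gd d(x,o)}$ stays bounded for $\half<s<1$ (and $\gTh_0[{\bf 1}]\equiv 1$ for $s=1$); since $|\gTh_{1-s}[\vp]|\le\|\vp\|_\infty\,\gTh_{1-s}[{\bf 1}]$ and, by Proposition \ref{prop:ins}, $|\Xi_s[\vp]|\le\|\mathcal{I}_s\|\,\|\vp\|_\infty\,\gTh_{1-s}[{\bf 1}]$, both $\gTh_{1-s}$ and $\Xi_s$ are bounded into $L^\infty(X,m)$. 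For the intertwining property I would carry out the same change of variables as in the proofs of Lemma \ref{lem:dual} and Proposition \ref{prop:ins}: using the conformal relation $\frac{d\mu_x}{d\mu_o}(\xi)=e^{-\gd b_\xi(x,o)}$, the cocycle property of $b$, and the invariance $b_{g.x}(g.y,g.z)=b_x(y,z)$, one finds $\gTh_{1-s}[\pi_{1-s}(g)\vp](x)=\gTh_{1-s}[\vp](g^{-1}.x)$, i.e. $\gTh_{1-s}$ intertwines $(\mathcal{F}_{1-s},\pi_{1-s})$ with the Koopman action $g\cdot F=F(g^{-1}.\,\cdot\,)$ on $L^\infty(X,m)$ (the $p=\infty$ case of the remark after Lemma \ref{lem:poies}). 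As $\mathcal{I}_s\colon\mathcal{F}_s\to\mathcal{F}_{1-s}$ is a $G$-intertwiner by Proposition \ref{prop:ins}, so is $\Xi_s=\gTh_{1-s}\circ\mathcal{I}_s$.

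The real content is injectivity, and here Corollary \ref{cor:bilan} enters, read through two identities expressing orbital values as matrix coefficients. Directly from the definitions, for $w\in\cC(\dd X)$,
$$Q(\pi_s(g){\bf 1},w)=\int_{\dd X}e^{-s\gd b_\xi(g.o,o)}\,\ol{w(\xi)}\,d\mu_o(\xi)=\gTh_{1-s}[\ol w](g.o),$$
while, using that $\mathcal{I}_s$ is self-adjoint on $L^2(\dd X,\mu_o)$ (Lemma \ref{lem:issurl2}), that $\pi_s(g)^*=\pi_{1-s}(g^{-1})$, that $\pi_{1-s}(g)\mathcal{I}_s=\mathcal{I}_s\pi_s(g)$ (Proposition \ref{prop:ins}), and the $G$-invariance of $Q_s=Q(\mathcal{I}_s[\,\cdot\,],\,\cdot\,)$,
$$\Xi_s[\vp](g^{-1}.o)=\gTh_{1-s}[\mathcal{I}_s\vp](g^{-1}.o)=(\mathcal{I}_s\vp,\pi_s(g^{-1}){\bf 1})_{L^2}=Q(\mathcal{I}_s[\pi_s(g)\vp],{\bf 1}).$$
Now suppose $\gTh_{1-s}[u]=0$ in $L^\infty(X,m)$; since $m$ has full support on $G.o$ this means $\gTh_{1-s}[u](g.o)=0$ for every $g\in G$. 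Feeding $v={\bf 1}$, $w=\ol u$, $f_2={\bf 1}$ and an arbitrary $f_1\in\cC(\ol X)$ into the first limit of Corollary \ref{cor:bilan}, the left-hand side is identically zero, hence $Q(\mathcal{I}_s[{\bf 1}],{\bf 1})\,Q(\ol u,[f_1|_{\dd X}]_s)=0$. By Lemma \ref{lem:is}, $Q(\mathcal{I}_s[{\bf 1}],{\bf 1})=\int_{\dd X}\mathcal{I}_s[{\bf 1}]\,d\mu_o\asymp(2s-1)^{-1}>0$, so $Q(\ol u,\psi)=0$ for all $\psi\in\cC(\dd X)$; by continuity of $Q$ and density of $\cC(\dd X)$ in $\mathcal{H}_s$ this gives $Q(\ol u,\,\cdot\,)\equiv0$ on $\mathcal{H}_s$, and the non-degeneracy of $Q$ on $\mathcal{H}_s\times\mathcal{H}_{1-s}$ (Lemma \ref{lem:dual} and the corollary following Proposition \ref{prop:ispos}) forces $u|_{\Lambda(G)}=0$. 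The same argument applied to the second limit of Corollary \ref{cor:bilan} (with $w={\bf 1}$, $f_1={\bf 1}$, arbitrary $f_2$) shows that $\Xi_s[\vp]=0$ in $L^\infty(X,m)$ implies $Q(\mathcal{I}_s[\vp],\psi)=0$ for all $\psi\in\cC(\dd X)$, hence $\mathcal{I}_s\vp=0$ in $L^2(\dd X,\mu_o)$, hence $\|\vp\|_s^2=(\mathcal{I}_s\vp,\vp)_{L^2}=0$; by positivity of $\mathcal{I}_s$ (Proposition \ref{prop:ispos}) this says $\vp$ is trivial in $\mathcal{H}_s$. One may instead note $\Xi_s=\gTh_{1-s}\circ\mathcal{I}_s$ and feed the injectivity of $\gTh_{1-s}$ into the positivity of $\mathcal{I}_s$.

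The step demanding the most care is the bookkeeping behind those two identities: matching $\gTh_{1-s}[\ol w](g.o)$ and $\Xi_s[\vp](g^{-1}.o)$ with the matrix-coefficient integrands of Corollary \ref{cor:bilan} rests on the $G$-invariance of $Q$ and $Q_s$ together with the conformal density relations, and on keeping straight which denominator, $\gTh_{1-s}[{\bf 1}](g.o)$ or $\gTh_s[{\bf 1}](g^{-1}.o)$, belongs to which of the two limits. One should also keep in mind that both operators annihilate every $\vp$ with $\vp|_{\Lambda(G)}=0$ — precisely $\Ker Q$ by Lemma \ref{lem:dual} — so "injective" is to be understood modulo that subspace, i.e. as injectivity of the induced maps on the Hilbert completions $\mathcal{H}_{1-s}$ and $\mathcal{H}_s$, which is what the argument above delivers.
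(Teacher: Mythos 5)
Your proposal is correct and follows the route the paper hints at, reducing injectivity to the equidistribution limits of Corollary \ref{cor:bilan} via the identities $Q(\pi_s(g){\bf 1},w)=\gTh_{1-s}[\ol w](g.o)$ and $\Xi_s[\vp](g^{-1}.o)=Q(\mathcal{I}_s[\pi_s(g)\vp],{\bf 1})$, with the factor $Q(\mathcal{I}_s[{\bf 1}],{\bf 1})\asymp(2s-1)^{-1}>0$ decoupling the two vectors. Your closing caveat is well taken and worth keeping: both operators annihilate $\Ker Q=\{\vp\in\cC(\dd X):\vp|_{\Lambda(G)}=0\}$ because $\mu_o$ charges only $\Lambda(G)$, so the asserted injectivity is that of the induced maps on $\mathcal{H}_{1-s}$ and $\mathcal{H}_s$ --- which is what your argument (and the paper) actually delivers --- and literal injectivity on $\cC(\dd X)$ holds only when $\Lambda(G)=\dd X$. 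A marginally shorter proof of the first injectivity bypasses Corollary \ref{cor:bilan} by invoking Lemma \ref{lem:extmart} directly: since $\gTh_{1-s}[\vp](x)/\gTh_{1-s}[{\bf 1}](x)\to\vp(\eta)$ as $x\to\eta\in\Lambda(G)$ along $\ol{\mathcal{Q}(\Lambda(G))}$ and $G.o$ is $R$-dense in $\mathcal{Q}(\Lambda(G))$, vanishing of $\gTh_{1-s}[\vp]$ on $G.o$ forces $\vp|_{\Lambda(G)}=0$; your alternative $\Xi_s=\gTh_{1-s}\circ\mathcal{I}_s$ together with positivity of $\mathcal{I}_s$ then yields the second injectivity from the first without a fresh application of the corollary.
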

Note that $\text{Ran}[\Xi_s]\subset \text{Ran}[\gTh_{1-s}]$, $\Xi_s[\vp]\asymp_s\gTh_s[\vp]$ and
$$\mathcal{I}_s|_{\mathcal{F}_s}=[\gTh_{1-s}|^{\text{Ran}[\gTh_{1-s}]}]^{-1}\circ\Xi_s|_{\mathcal{F}_s}$$
Compare with the formula introduced in \cite{MR710827} Chapter 4.

\begin{prop}\label{prop:irr}
The unitary complementary representations $(\pi_s)_s$ are irreducible for all $0\le s\le1$.
\end{prop}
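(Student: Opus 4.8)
The plan is to use the equidistribution of matrix coefficients from Corollary \ref{cor:bilan} as a spectral-averaging device: a nonzero closed invariant subspace must contain enough information to recover all of $\mathcal{H}_s$ via these averages. Fix $0\le s\le1$; by duality (the coupling $Q$ between $\mathcal{H}_s$ and $\mathcal{H}_{1-s}$) it suffices to treat $\half\le s\le 1$, and the case $s=\half$ is the classical irreducibility of the boundary representation, so assume $\half<s\le 1$. Let $\mathcal{K}\subset\mathcal{H}_s$ be a nonzero closed $G$-invariant subspace with orthogonal projection $P$, and let $P^\perp=\mathbf{I}-P$ be the projection onto $\mathcal{K}^\perp$, which is also invariant. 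The goal is to show $\mathcal{K}^\perp=\{0\}$.

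First I would rewrite the operator $\mathcal{A}_{s,t}[f_1,f_2]$ (or rather its version $\mathcal{A}_{1-s,t}$, transported to $\mathcal{H}_s$ via $\mathcal{I}_s$) so that Corollary \ref{cor:bilan} reads: for $v,w\in\mathcal{H}_s$ and $f_1,f_2\in\cC(\ol X)$,
\begin{align*}
\int_G \frac{Q(\mathcal{I}_s[\pi_s(g)v],w)}{\gTh_s[\mathbf 1](g^{-1}.o)}\,f_1(g.o)f_2(g^{-1}.o)\,d\nu_{o,t}(g)\;\xrightarrow{t\to\infty}\;Q(\mathcal{I}_s[v],[f_2|_{\dd X}]_s)\,Q(\mathcal{I}_s[w],[f_1|_{\dd X}]_s),
\end{align*}
i.e. the weak limit of the averaged operators is the rank-one operator $v\mapsto Q(\mathcal{I}_s[v],[f_2|_{\dd X}]_s)\,[f_1|_{\dd X}]_s$ (in the $\mathcal{H}_s$-inner product this is $v\mapsto \langle v,[f_2|_{\dd X}]_s\rangle_s\,[f_1|_{\dd X}]_s$ after absorbing $\mathcal{I}_s$ into the completion norm). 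Since $\mathcal{K}$ and $\mathcal{K}^\perp$ are $G$-invariant and $P$, $P^\perp$ commute with every $\pi_s(g)$, they commute with each averaging operator $\mathcal{A}_{1-s,t}[f_1,f_2]$ (uniformly bounded by Lemma \ref{lem:unifbound}), hence with the weak limit. Applying $P$ on the left and $P^\perp$ on the right of the rank-one limit operator forces
\[
\langle P v,[f_2|_{\dd X}]_s\rangle_s\;\langle P^\perp [f_1|_{\dd X}]_s , w\rangle_s \;=\;0
\]
for all $v,w\in\mathcal{H}_s$ and all $f_1,f_2\in\cC(\ol X)$. Taking $v$ to range so that $Pv$ is an arbitrary nonzero element of $\mathcal{K}$ and $w$ an arbitrary element of $\mathcal{K}^\perp$, and using that the classes $[f|_{\dd X}]_s$ with $f\in\cC(\ol X)$ span a dense subspace of $\mathcal{H}_s$ (since $\cC(\dd X)$ is dense and every $\vp\in\cC(\dd X)$ extends to $\cC(\ol X)$), one of the two factors must vanish identically: either $\mathcal{K}\perp\mathcal{H}_s$, which is absurd, or $\mathcal{K}^\perp\perp\mathcal{H}_s$, giving $\mathcal{K}^\perp=\{0\}$ and hence $\mathcal{K}=\mathcal{H}_s$.

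The main obstacle I expect is not the averaging trick itself but the density statement and the bookkeeping of norms: one must check that $\{[f|_{\dd X}]_s : f\in\cC(\ol X)\}$ is genuinely total in $\mathcal{H}_s$ (this follows from Proposition \ref{prop:incl}, which sandwiches $\mathcal{H}_{1-s}\subset L^2(\dd X,\mu_o)\subset\mathcal{H}_s$ with continuous inclusions, so $\cC(\dd X)$, dense in $L^2$, is dense in $\mathcal{H}_s$), and that the weak convergence in Corollary \ref{cor:bilan} may legitimately be combined with the bounded operators $P$, $P^\perp$ — which is fine precisely because the $\mathcal{A}_{1-s,t}$ are uniformly bounded (Lemma \ref{lem:unifbound}), so the limit is taken in the weak operator topology on a bounded set. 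A secondary point to handle carefully is the reduction of the cases $0\le s<\half$ to $\half<s\le1$: one uses that $\pi_s$ and $\pi_{1-s}$ are linked by the intertwiner $\mathcal{I}_{1-s}$ (or by $Q$), so a nontrivial invariant subspace of one produces one in the other, and the already-treated range $\half<s\le1$ closes the argument; the endpoint $s=1$ is the trivial representation, manifestly irreducible, and $s=\half$ is cited from \cite{MR2787597}.
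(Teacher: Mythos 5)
Your proposal is correct and follows essentially the same route as the paper: both hinge on Corollary~\ref{cor:bilan} and Lemma~\ref{lem:unifbound} to produce a rank-one weak operator limit of averages of the $\pi_s(g)$, and both then finish by standard von Neumann algebra reasoning. The paper phrases the conclusion via cyclicity of the constant vector ${\bf1}$ (observing that the weak limit of $\mathcal{P}_{s,t}=\mathcal{A}_{s,t}[{\bf1},{\bf1}]$ is the projection onto $\BC{\bf1}$, which lies in the von Neumann algebra generated by $\pi_s$), whereas you use the equivalent commutant formulation (a projection commuting with a rank-one operator with nonzero ``legs'' must fix or kill both), but this is a repackaging rather than a genuinely different argument.
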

\begin{proof}
The case $s=\half$ is proved in \cite{Garncarek:2014aa} and the cases $s=0,1$ are trivial. 
Since $\pi_s^*\simeq\pi_{1-s}$ let us assume $\half<s<1$.

As proved in Section \ref{sec:anamat} the sequence of operators $(\mathcal{P}_{s,t})_t$ defined as $\mathcal{P}_{s,t}=\mathcal{A}_{s,t}[{\bf1},{\bf1}]$
,for $t\ge0$, converges for the weak operator topology to the projection, $\mathcal{P}_s$, over the constant vector ${\bf1}\in\mathcal{H}_s$ in $\mathcal{B}[\mathcal{H}_s]$.

Since operators $(\mathcal{P}_{s,t})_t$ belongs to the von Neumann algebra generated by $\pi_s$ inside of $\mathcal{B}[\mathcal{H}_s]$, in order to prove the irreducibility of $\pi_s$ it is enough to prove that ${\bf1}$ is cyclic for $\pi_s$ \cite{MR0352996}.

Let $v\in \ol{\text{Span}[\pi_s(g){\bf1}]}^\bot\subset\mathcal{H}_{s}$, then for any $f\in\cC(\dd X)\simeq \mathcal{F}_s$ one has:
$$Q_s(\mathcal{A}_{s,t}[ f,{\bf1}]{\bf1},v)=\int_G\frac{Q(\mathcal{I}_s[\pi_s(g){\bf1}],v)}{\gTh_s[{\bf1}](g^{-1}.o)}.\hat{f}(g.o)d\nu_{o,t}(g)=0.$$
On the other hand Corollary \ref{cor:bilan}  implies:
$$\int_{G} \frac{Q(\mathcal{I}_s[\pi_s(g){\bf1}],v)}{\gTh_s[{\bf1}](g^{-1}.o)}.\hat{f}(g.o)d\nu_{o,t}(g)\rightarrow Q_s({\bf1})Q(\mathcal{I}_s(f),v)$$
, in other words $Q(\mathcal{I}_s(f),v)=0$ for any $f\in \cC(\dd X)$. 
Since $\mathcal{F}_s\simeq \cC(\dd X)$ is dense in $\mathcal{H}_s$, $v=0$ and the vector ${\bf1}$ is cyclic.
\end{proof}

Given $\vp,\phi\in\cC(\dd X)\simeq\mathcal{F}_s$ Lemma \ref{lem:poies} implies the following estimate on the rate of decay holds:
$$|Q_s(\pi_s(g)\vp,\phi)|\prec_s\|\vp\|_\infty\|\phi\|_\infty\gTh_s[{\bf1}](g.o)\asymp_s\|\vp\|_\infty\|\phi\|_\infty e^{-[\half-|s-\half|]\gd d(g.o,o)}$$
As proved below this is sharp.

\begin{prop}[Characteristic decay]\label{prop:characterization}
Let $\pi_s$ be the complementary representation of parameter $\half<s\le1$ and $v,w\in\mathcal{H}_s$.
If one can find $\e>0$ such that:
$$Q_s(\pi_s(g)v,w)=O(e^{-[\half-|s-\half|]\gd d(g.o,o)-\e d(g.o,o)})$$
then $Q_s(\pi_s(\bullet)v,w)=0$.
\end{prop}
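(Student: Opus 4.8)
The plan is to feed the hypothesis into the second equidistribution formula of Corollary \ref{cor:bilan} and to play the assumed rate of decay against the weight $\gTh_s[{\bf1}]$ occurring there in the denominator. Recall that for $v,w\in\mathcal{H}_s$ one has $Q(\mathcal{I}_s[\pi_s(g)v],w)=Q_s(\pi_s(g)v,w)$ by the very definition of $Q_s$, so the numerator of that formula is precisely the matrix coefficient under study. Hence for arbitrary $f_1,f_2\in\cC(\ol{X})$ Corollary \ref{cor:bilan} yields
$$\int_{G}\frac{Q_s(\pi_s(g)v,w)}{\gTh_s[{\bf1}](g^{-1}.o)}\,f_1(g.o)\,f_2(g^{-1}.o)\,d\nu_{o,t}(g)\xrightarrow{t\rightarrow\infty}Q(\mathcal{I}_s[v],[f_2|_{\dd X}]_s)\,Q(\mathcal{I}_s[w],[f_1|_{\dd X}]_s).$$

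Now $\nu_{o,t}$ is a probability supported on $S^*_{o,G}(t)\subset S_{o,G}(t)$, so every $g$ in its support satisfies $d(g.o,o)=d(g^{-1}.o,o)\ge tR$. Writing the hypothesis as $|Q_s(\pi_s(g)v,w)|\le C\,e^{-[\half-|s-\half|]\gd d(g.o,o)}e^{-\e d(g.o,o)}$ (valid once $d(g.o,o)$ is large enough) and invoking the two-sided estimate $\gTh_s[{\bf1}](g^{-1}.o)\asymp_s e^{-[\half-|s-\half|]\gd d(g.o,o)}$ of Lemma \ref{lem:poies}, the integrand above is, for all $t$ large and uniformly over $\operatorname{supp}(\nu_{o,t})$, bounded in modulus by a constant (depending only on $s,v,w,\e$) times $e^{-\e tR}$. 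Hence the left-hand side is $O(\|f_1\|_\infty\|f_2\|_\infty\,e^{-\e tR})$, which tends to $0$, and therefore
$$Q(\mathcal{I}_s[v],[f_2|_{\dd X}]_s)\,Q(\mathcal{I}_s[w],[f_1|_{\dd X}]_s)=0\qquad\text{for all }f_1,f_2\in\cC(\ol{X}).$$

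It remains to deduce that $v=0$ or $w=0$, which forces $Q_s(\pi_s(\bullet)v,w)\equiv0$. Since $\dd X$ is closed in the compact metrizable space $\ol{X}$, the restrictions $f|_{\dd X}$ with $f\in\cC(\ol{X})$ run over all of $\cC(\dd X)\simeq\mathcal{F}_s$, which is dense in $\mathcal{H}_s$; moreover, for fixed $u\in\mathcal{H}_{1-s}$ the functional $h\mapsto Q(u,h)$ is continuous on $\mathcal{H}_s$ by the coupling estimate $|Q(u,h)|\le\|u\|_{1-s}\|h\|_s$. Assume $v\neq0$ and $w\neq0$. Since $\mathcal{I}_s:\mathcal{H}_s\rightarrow\mathcal{H}_{1-s}$ is isometric (Section \ref{sec:pos}), the vectors $\mathcal{I}_s[v]$ and $\mathcal{I}_s[w]$ are nonzero in $\mathcal{H}_{1-s}$, so by non-degeneracy of $Q$ over $\mathcal{H}_{1-s}\times\mathcal{H}_s$ (Lemma \ref{lem:dual}) neither $h\mapsto Q(\mathcal{I}_s[v],h)$ nor $h\mapsto Q(\mathcal{I}_s[w],h)$ vanishes identically on $\mathcal{H}_s$, hence, by density, not on $\mathcal{F}_s$ either. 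Choosing $f_1,f_2\in\cC(\ol{X})$ with $Q(\mathcal{I}_s[v],[f_2|_{\dd X}]_s)\neq0$ and $Q(\mathcal{I}_s[w],[f_1|_{\dd X}]_s)\neq0$ contradicts the identity just displayed. Therefore $v=0$ or $w=0$.

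The one step that requires genuine care is the exponent bookkeeping in the second paragraph: one must use the full two-sided estimate of Lemma \ref{lem:poies}, whose lower bound is what keeps $1/\gTh_s[{\bf1}](g^{-1}.o)$ under control, and observe that the residual factor $e^{-\e d(g.o,o)}$ is dominated uniformly over $\operatorname{supp}(\nu_{o,t})$ precisely because $d(g.o,o)\ge tR$ there. Everything else is soft: density of $\cC(\dd X)$ in $\mathcal{H}_s$, continuity and non-degeneracy of the coupling $Q$, and injectivity of $\mathcal{I}_s$.
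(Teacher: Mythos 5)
Your proof is correct and follows essentially the same route as the paper: apply the hypothesis together with the two-sided estimate of Lemma~\ref{lem:poies} to force the averaged matrix coefficients of Corollary~\ref{cor:bilan} to vanish in the limit, then conclude via density and non-degeneracy. The only difference is that you make explicit the final step (injectivity of $\mathcal{I}_s$, Tietze extension from $\dd X$ to $\ol{X}$, non-degeneracy of $Q$) which the paper compresses into ``it follows that $v$ or $w$ must be zero.''
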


\begin{proof}
Lemma \ref{lem:poies} implies: 
\begin{align*}
&Q_s(\mathcal{A}_{s,t}[f_1,f_2]v,w)=\int_{G}\frac{Q_s(\pi_s(g)v,w)}{\gTh_s[{\bf1}](g.o)}f_1(g.o)f_2(g^{-1}.o)d\nu_{o,t}(g)\\
&\prec_s \|f_1\|_\infty\|f_2\|_\infty\int_{G}e^{-\e d(g.o,o)}d\nu_{o,t}(g)\asymp\|f_1\|_\infty\|f_2\|_\infty e^{-\e tR}\rightarrow 0\\
\end{align*}
for any continuous functions $f_1$ and $f_2$ in $\cC(\ol{X})$

On the other hand:
$$\lim_tQ_s(\mathcal{A}_{s,t}[f_1,f_2]v,w)=Q_s(v,[f_1|_{\dd X}]_s)Q_s([f_2|_{\dd X}]_s,w)$$
for all $f_1$ and $f_2$ in $\cC(\ol{X})$.

It follows that $v$ or $w$ must be zero and therefore $Q_s(\pi_s(g)v,w)=0$ for all $g\in G$.
\end{proof}

\begin{cor}\label{cor:noeq}
Let $\pi_s$ and $\pi_{s'}$ be two complementary representations of parameter $0\le s,s'\le1$.
Then $\pi_s$ and $\pi_{s'}$ are unitary equivalent if and only if $s'=1-s$
\end{cor}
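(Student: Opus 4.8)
The plan is to prove both implications, the easy one first. For the ``if'' direction, suppose $s' = 1-s$. If $s = \half$ there is nothing to prove, so assume $\half < s \le 1$ (the case $s < \half$ being symmetric). Then the operator $\mathcal{I}_s : \mathcal{F}_s \to \mathcal{F}'_{1-s}$ is, by the Corollary following Proposition \ref{prop:ispos}, an isometric $G$-intertwiner between $\pi_s$ and $\pi_{1-s}$ that extends to a unitary between $\mathcal{H}_s$ and $\mathcal{H}_{1-s}$; hence $\pi_s \simeq \pi_{1-s}$, as desired.

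For the ``only if'' direction, assume $\pi_s \simeq \pi_{s'}$ and that $s \ne s'$ and $s \ne 1-s'$; we derive a contradiction. After replacing $s$ by $1-s$ and $s'$ by $1-s'$ if necessary (using the ``if'' direction and $\pi_t^* \simeq \pi_{1-t}$), we may normalize so that $\half \le s, s' \le 1$, and then $s \ne s'$. The strategy is to compare the characteristic rates of decay of matrix coefficients, the quantity $\half - |s - \half| = 1 - s$ in this normalized range, which Proposition \ref{prop:characterization} shows is a genuine invariant: no nonzero matrix coefficient of $\pi_s$ decays strictly faster than $e^{-(1-s)\gd d(g.o,o)}$. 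Concretely, let $U : \mathcal{H}_s \to \mathcal{H}_{s'}$ be a unitary intertwiner. Pick $v, w \in \mathcal{H}_s$ with $Q_s(v,w) \ne 0$ (for instance $v = w = {\bf 1}$, using irreducibility from Proposition \ref{prop:irr} and the non-degeneracy of $Q_s$); then $Q_{s'}(\pi_{s'}(g)Uv, Uw) = Q_s(\pi_s(g)v,w)$, which by the sharp upper bound preceding Proposition \ref{prop:characterization} satisfies
\begin{align*}
|Q_{s'}(\pi_{s'}(g)Uv,Uw)| \prec_{s} e^{-(1-s)\gd d(g.o,o)}.
\end{align*}
If $s > s'$ then $1 - s < 1 - s'$, so this exhibits a nonzero matrix coefficient of $\pi_{s'}$ decaying like $e^{-(1-s)\gd d(g.o,o)} = O(e^{-(1-s')\gd d(g.o,o) - \e d(g.o,o)})$ with $\e = s - s' > 0$; Proposition \ref{prop:characterization} then forces $Q_{s'}(\pi_{s'}(\bullet)Uv, Uw) \equiv 0$, whence $Q_s(\pi_s(\bullet)v,w) \equiv 0$ and in particular $Q_s(v,w) = 0$, a contradiction. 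If instead $s < s'$, apply the same argument with the roles of $s$ and $s'$ exchanged, using $U^{-1}$.

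The main obstacle is the normalization step: Proposition \ref{prop:characterization} is stated only for $\half < s \le 1$, so one must first reduce the general pair $(s,s')$ to this range. This is handled by the already-established facts $\pi_t \simeq \pi_{1-t}$ (the ``if'' direction) and $\pi_t^* \simeq \pi_{1-t}$: if, say, $s' < \half$, replace $\pi_{s'}$ by the equivalent $\pi_{1-s'}$, and the hypothesis $s \notin \{s', 1-s'\}$ is preserved; the boundary cases $s = \half$ or $s' \in \{0,1\}$ are disposed of separately since $\pi_{\half}$ is weakly contained in the regular representation while $\pi_s$ for $s \ne \half$ is not (by Proposition \ref{prop:weak}), and $\pi_1 = {\bf 1}_G$ is finite-dimensional while the others are not. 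Once everything is in the range $[\half,1]$ with $s \ne s'$, the decay-rate comparison above closes the argument.
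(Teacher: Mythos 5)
Your overall strategy is the same as the paper's: normalize to $[\tfrac12,1]$, choose $v=w=\mathbf 1$, transport the matrix coefficient through a unitary intertwiner, and invoke Proposition~\ref{prop:characterization} on the target representation. Your treatment of the boundary cases $s=\tfrac12$ (via Proposition~\ref{prop:weak}) and $s\in\{0,1\}$ (finite-dimensionality) is in fact more complete than the paper's, which silently assumes $\tfrac12<s<s'\le 1$.

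However, the two subcases in your decay comparison are swapped, and as written the main case fails. After normalizing $\tfrac12\le s,s'\le 1$, the transported coefficient satisfies
$$Q_{s'}(\pi_{s'}(g)U\mathbf 1,U\mathbf 1)=Q_s(\pi_s(g)\mathbf 1,\mathbf 1)\preceq e^{-(1-s)\gd\,d(g.o,o)}.$$
For Proposition~\ref{prop:characterization} applied to $\pi_{s'}$ to give a contradiction you need this to beat the critical rate $e^{-(1-s')\gd\,d(g.o,o)}$, i.e.\ you need $(1-s)\gd>(1-s')\gd$, which is $s<s'$, not $s>s'$. In the case $s>s'$ that you treat directly, $1-s<1-s'$, so $e^{-(1-s)\gd d}$ decays \emph{slower} than $e^{-(1-s')\gd d}$ and the identity $e^{-(1-s)\gd d}=O\bigl(e^{-(1-s')\gd d-\e d}\bigr)$ with $\e=s-s'>0$ is false (one would need $\e=(s'-s)\gd$, which is negative here). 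The fix is simply to run the primary computation when $s<s'$ with the intertwiner $U$, and use $U^{-1}$ when $s>s'$ — the opposite of what you wrote. Once the two cases are exchanged, the argument is correct and coincides with the paper's.
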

\begin{proof}
Since $\pi_s\simeq\pi_{1-s}$ for any $0\le s\le 1$ one can assume $\half< s<s'\le1$.
Suppose one can find $T_{s\rightarrow s'}:\mathcal{H}_s\rightarrow\mathcal{H}_{s'}$ unitary intertwiner between $\pi_s$ and $\pi_{s'}$. 

Then $T_{s\rightarrow {s'}}{\bf1}\neq0$ and:
$$Q_{s'}(\pi_{s'}(g)T_{s\rightarrow {s'}}{\bf1},T_{s\rightarrow {s'}}{\bf1})= Q_s(\pi_s(g){\bf1},{\bf1})$$
It follows that:
$$Q_{s'}(\pi_{s'}(g)T_{s\rightarrow {s'}}{\bf1},T_{s\rightarrow {s'}}{\bf1})=O(e^{-[\half-|s'-\half|]\gd d(g.o,o)-|s'-s| \gd d(g.o,o)})$$
and Proposition \ref{prop:characterization} implies $T_{s\rightarrow {s'}}{\bf1}=0$ which is a contradiction.
\end{proof}

\begin{prop}\label{prop:weak}
The unitary representations $(\mathcal{H}_s)_s$ are not weakly contained in the regular representation of $G$ except for $s=\half$.
\end{prop}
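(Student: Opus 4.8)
The strategy is to test the hypothetical weak containment of $\mathcal{H}_s$ in $\gl$ against the averaging operators $\mathcal{P}_{s,t}$ of Section \ref{sec:anamat}. I would first dispose of the easy cases. For $s=\half$ the representation $\mathcal{H}_\half=L^2(\dd X,\mu_o)$ is the boundary representation, which is tempered --- this is known for convex-cocompact, hence word-hyperbolic, groups (see \cite{MR2787597} and the references therein) --- so the substance of the statement is that $\mathcal{H}_s$ is \emph{not} weakly contained in $\gl$ for $s\neq\half$. As $\mathcal{I}_s$ is a unitary equivalence $\mathcal{H}_s\simeq\mathcal{H}_{1-s}$, it is enough to treat $\half<s\le1$; the case $s=1$ is the trivial representation, which is not weakly contained in $\gl$ because the non-elementary group $G$ is non-amenable. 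So I would fix $\half<s<1$ and suppose, for contradiction, that $\mathcal{H}_s$ is weakly contained in $\gl$.

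The crucial device is the positive, finitely supported measure on $G$
\[
\mu_t:=\sum_{g\in S^*_{o,G}(t)}\frac{\mu_o\otimes\mu_o\big(O^{(2)}_{o}(g)\big)}{\gTh_s[{\bf1}](g.o)}\,\delta_g ,
\]
chosen so that $\pi_s(\mu_t)=\mathcal{A}_{s,t}[{\bf1},{\bf1}]=\mathcal{P}_{s,t}$ as operators on $\mathcal{H}_s$. On one side, the weak-operator convergence $\mathcal{P}_{s,t}\to\mathcal{P}_s$ (the projection onto $\BC{\bf1}$) established in Section \ref{sec:anamat} and used in the proof of Proposition \ref{prop:irr}, together with $\|{\bf1}\|_{\mathcal{H}_s}^2=Q_s({\bf1})\asymp(2s-1)^{-1}>0$ (Lemma \ref{lem:is}), forces
\[
\|\mathcal{P}_{s,t}\|_{\mathcal{B}(\mathcal{H}_s)}\ \ge\ \frac{|Q_s(\mathcal{P}_{s,t}{\bf1},{\bf1})|}{\|{\bf1}\|_{\mathcal{H}_s}^2}\ \xrightarrow{t\to\infty}\ 1 .
\]
On the other side, weak containment gives $\|\mathcal{P}_{s,t}\|_{\mathcal{B}(\mathcal{H}_s)}=\|\pi_s(\mu_t)\|\le\|\gl(\mu_t)\|_{\mathcal{B}(\ell^2 G)}$ for all $t$, so it only remains to prove $\|\gl(\mu_t)\|\to0$.

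For that I would compute the $\ell^2$-mass of $\mu_t$. By Lemma \ref{lem:vitali}(4) and the Shadow Lemma \ref{lem:equi1}, $\mu_o\otimes\mu_o(O^{(2)}_{o}(g))\asymp e^{-\gd tR}$, and by Lemma \ref{lem:poies}, $\gTh_s[{\bf1}](g.o)\asymp e^{-(1-s)\gd tR}$, for $g\in S^*_{o,G}(t)$; with $|S^*_{o,G}(t)|\asymp e^{\gd tR}$ (Lemma \ref{lem:vitali}(1)) this yields $\|\mu_t\|_{\ell^2(G)}^2\asymp e^{\gd tR}\,e^{-2\gd tR}\,e^{2(1-s)\gd tR}=e^{(1-2s)\gd tR}$, which is exponentially small since $s>\half$. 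Now $G$ acts properly cocompactly on the hyperbolic space $\mathcal{Q}(\Lambda(G))$, hence is a word-hyperbolic group and satisfies the Haagerup inequality (property RD) with respect to the orbit length function $g\mapsto d(g.o,o)$: there exist $C,P\ge0$ with $\|\gl(f)\|_{\mathcal{B}(\ell^2 G)}\le C(1+n)^{P}\|f\|_{\ell^2(G)}$ for every $f$ supported in $\{g\in G:d(g.o,o)\le n\}$. Applying this to $\mu_t$, which is supported in $\{g:d(g.o,o)<(t+1)R\}$, gives $\|\gl(\mu_t)\|\preceq t^{P}e^{\frac12(1-2s)\gd tR}\to0$, contradicting the lower bound above. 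Finally the unitary equivalence $\mathcal{H}_s\simeq\mathcal{H}_{1-s}$ transports the conclusion to $0\le s<\half$.

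The step I expect to be the main obstacle is exactly this operator-norm bound on the regular-representation side: one has to control $\|\gl(\mu_t)\|$ by $\|\mu_t\|_{\ell^2}$, that is, to invoke (or reprove) property RD for $G$ relative to the orbit length function, equivalently the estimate $\|\gl(\sigma_n)\|\preceq\mathrm{poly}(n)\,|S_{o,G}(n)|^{-1/2}$ for normalized sphere measures. This is where hyperbolicity and non-amenability of $G$ enter essentially; the softer Cowling--Haagerup--Howe integrability criterion, by contrast, does not suffice here, since the slowly decaying matrix coefficient $g\mapsto Q_s(\pi_s(g){\bf1},{\bf1})$ is nonetheless small at infinity and could become $\ell^{2+\varepsilon}$ after an arbitrarily small perturbation of ${\bf1}$. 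Everything else is a routine combination of the Shadow Lemma, the covering Lemma \ref{lem:vitali}, the asymptotics of $\gTh_s[{\bf1}]$, and the weak-operator convergence $\mathcal{P}_{s,t}\to\mathcal{P}_s$ already in hand.
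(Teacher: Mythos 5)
Your proposal is correct and follows essentially the same route as the paper: lower-bound the norm of $\pi_s$ applied to a sphere-supported average by equidistribution of matrix coefficients (Corollary \ref{cor:bilan}), upper-bound the norm of $\gl$ applied to the same measure by the Haagerup/RD inequality together with the $\ell^2$-mass computation from the Shadow and covering lemmas, and derive an exponential contradiction when $s>\half$. The only cosmetic differences are that you normalize the measure by $\gTh_s[{\bf1}]$ so that $\pi_s(\mu_t)=\mathcal{P}_{s,t}$ has norm tending to $1$, whereas the paper keeps $\nu_{o,t}$ and compares $e^{-(1-s)\gd tR}$ against $Q(t)e^{-\half\gd tR}$ directly, and that you handle $s=1$ separately by non-amenability, which the paper's estimate covers anyway.
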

\begin{proof}
The weak containment of $\mathcal{H}_\half$ inside of the regular representation is well known and follows from \cite{MR1293309} with \cite{MR1209424}.
Assume $\mathcal{H}_s$ is weakly contained inside of the regular representation $\gl$ of $G$ for $s>\half$.
The spectral transfer principal implies:
$$\|\pi_s(f)\|\le\|\gl(f)\|$$
for all $f\in L^1(G,dg)$
and thus 
$$\int_G Q_s(\pi_s(g){\bf1},{\bf1})d\nu_{o,t}(g)\prec_s\|\pi_s(\nu_{o,t})\|\le\|\gl(\nu_{o,t})\|$$
On one hand the Corollary \ref{cor:bilan} implies 
$$\int_G Q_s(\pi_s(g){\bf1},{\bf1})d\nu_{o,t}(g)\asymp e^{-(1-s)\gd t}$$
On the other hand, using the hyperbolicity of $G$, the Haagerup inequality \cite{MR943303} implies:
$$\|\gl(\nu_{o,t})\|\le Q(t)\|\nu_{o,t}\|_2\asymp Q(t)e^{-\half\gd t}$$
where $Q$ is polynomial function which is a contradiction.
\end{proof}

\begin{prop}\label{prop:fell}
The family of representations $(\mathcal{H}_s)_{s\in[0,1]}$ is continuously parametrized for the Fell topology over the unitary dual of $G$.
\end{prop}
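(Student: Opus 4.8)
The plan is to reduce the continuity of $s\mapsto\pi_s$ to the pointwise continuity on $G$ of the normalised spherical functions
\[
\varphi_s(g)=\frac{Q_s(\pi_s(g){\bf1},{\bf1})}{Q_s({\bf1},{\bf1})},\qquad g\in G,\ s\in[0,1].
\]
Since each $\pi_s$ is irreducible (Proposition \ref{prop:irr}) and ${\bf1}$ is a nonzero, hence cyclic, vector of $\mathcal{H}_s$ (trivially for $s\in\{0,1\}$, by \cite{Garncarek:2014aa} for $s=\half$, and as in the proof of Proposition \ref{prop:irr} for $\half<s<1$), and since $\varphi_s(e)=1$ for all $s$, the continuity of $s\mapsto\pi_s$ for the Fell topology is equivalent to that of $s\mapsto\varphi_s$ for pointwise convergence on $G$: a neighbourhood basis of $\pi_{s_0}$ is given by the representations admitting a unit vector whose normalised matrix coefficient is close to $\varphi_{s_0}$ on a prescribed finite subset of $G$, and then one takes the (suitably rescaled) vector ${\bf1}$ inside $\mathcal{H}_s$. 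Using the unitary equivalence $\pi_s\simeq\pi_{1-s}$ (Section \ref{sec:pos}), which gives $\varphi_s=\varphi_{1-s}$, it is enough to treat $s_0\in[\half,1]$.

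Unwinding the definitions one finds, for $\half<s\le1$ and $g\in G$ (using $\pi_s(g){\bf1}=[\tfrac{dg\mu_o}{d\mu_o}]^{s}$ as a function on $\dd X$, Fubini, and the symmetry of the Gromov product),
\[
\varphi_s(g)=\frac{\displaystyle\int_{\dd X}\Big[\tfrac{dg\mu_o}{d\mu_o}(\eta)\Big]^{s}\,(2s-1)\,\mathcal{I}_s[{\bf1}](\eta)\,d\mu_o(\eta)}{\displaystyle\int_{\dd X}(2s-1)\,\mathcal{I}_s[{\bf1}](\eta)\,d\mu_o(\eta)}.
\]
For a fixed $s_0\in(\half,1)$ both numerator and denominator are continuous at $s_0$: for $s$ near $s_0$ the kernel $e^{2(1-s)\gd\langle\xi,\eta\rangle_o}=d_o(\xi,\eta)^{-2(1-s)\gd}$ is dominated by the $\mu_o\otimes\mu_o$-integrable kernel $d_o(\xi,\eta)^{-\ga}$ for any fixed $\ga$ with $2(1-s_0)\gd<\ga<\gd$ (recall $\sup_\xi\int_{\dd X}d_o(\xi,\eta)^{-\ga}\,d\mu_o(\eta)<\infty$ for $\ga<\gd$), while $[\tfrac{dg\mu_o}{d\mu_o}]^{s}$ is uniformly bounded by continuity of the Busemann cocycle, so dominated convergence applies; moreover the denominator is bounded away from $0$ by Lemma \ref{lem:is}. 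Hence $\varphi_s\to\varphi_{s_0}$ on $(\half,1)$. At $s_0=1$ both integrals tend to $1$, so $\varphi_s(g)\to1=\varphi_1(g)$ for every $g$, consistently with $\pi_1={\bf1}_G$; and $[0,\half)$ is covered by $\varphi_s=\varphi_{1-s}$.

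The delicate point is $s_0=\half$, where $Q_s({\bf1},{\bf1})\asymp(2s-1)^{-1}\to\infty$ and the leading factor must be shown to cancel in the ratio. Write $h_s=(2s-1)\,\mathcal{I}_s[{\bf1}]=(2s-1)\,\mathcal{I}_s{\bf1}\in L^2(\dd X,\mu_o)$. The inputs are: (i) $\|(2s-1)\mathcal{I}_s\|_{L^2\to L^2}\prec1$ uniformly, by the Schur test together with Lemma \ref{lem:is}; (ii) $\pi_s(h)\to\pi_{\half}(h)$ in operator norm on $L^2(\dd X,\mu_o)$ for each $h\in G$, with the $\pi_s(h)$ uniformly bounded for $s\in[\half,1]$; (iii) $\int_{\dd X}h_s\,d\mu_o\asymp1$, bounded away from $0$ (Lemma \ref{lem:is}). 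Given $s_k\downarrow\half$, exploit the weak-operator compactness of bounded sets on the separable space $L^2(\dd X,\mu_o)$ to pass to a subsequence along which $(2s_k-1)\mathcal{I}_{s_k}$ converges, in the weak operator topology, to a positive operator $L$. Passing to the limit in $\mathcal{I}_{s_k}\pi_{s_k}(g)=\pi_{1-s_k}(g)\mathcal{I}_{s_k}$, using (i), (ii) and $\pi_{1-s_k}(g)^*=\pi_{s_k}(g^{-1})$, yields $L\pi_{\half}(g)=\pi_{\half}(g)L$ for every $g\in G$; since $\pi_{\half}$ is irreducible, Schur's lemma gives $L=c_0\,{\bf I}$ with $c_0=(L{\bf1},{\bf1})>0$ by (iii). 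Consequently $h_{s_k}\rightharpoonup c_0{\bf1}$ weakly in $L^2(\dd X,\mu_o)$, and since $[\tfrac{dg\mu_o}{d\mu_o}]^{s_k}\to[\tfrac{dg\mu_o}{d\mu_o}]^{\half}$ in norm there, the displayed formula gives
\[
\varphi_{s_k}(g)\;\longrightarrow\;\frac{c_0\displaystyle\int_{\dd X}\big[\tfrac{dg\mu_o}{d\mu_o}\big]^{\half}d\mu_o}{c_0}=\int_{\dd X}\Big[\tfrac{dg\mu_o}{d\mu_o}\Big]^{\half}d\mu_o=\varphi_{\half}(g),
\]
the constant $c_0$ cancelling. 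As such a subsequence can be extracted from any $s_k\downarrow\half$, we obtain $\varphi_s(g)\to\varphi_{\half}(g)$ as $s\to\half^+$, hence $\pi_s\to\pi_{\half}$ for the Fell topology as $s\to\half^+$; the limit $s\to\half^-$ follows from $\pi_s\simeq\pi_{1-s}$. I expect the main obstacle to be exactly this cancellation at $s=\half$: a priori $h_s$ itself need not converge as $s\to\half^+$, and the Schur-lemma argument above is designed to bypass that, the only property of $h_s$ genuinely used being the uniform two-sided bound $h_s\asymp1$.
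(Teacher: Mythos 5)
Your proof is correct, and at $s_0=\tfrac12$ it takes a genuinely different route than the paper. Both arguments share the same preparatory steps (reduction to $[\tfrac12,1]$ by $\pi_s^*\simeq\pi_{1-s}$, dominated convergence away from $\tfrac12$, a uniform bound à la Lemma \ref{lem:is}, and a weak-operator compactness extraction), but the mechanism for controlling the blow-up at $s=\tfrac12$ differs. The paper uses the decomposition $\mathcal{M}_s=\mathcal{I}_s+\mathfrak{D}_s$ from the proof of Proposition \ref{prop:incl}, extracts a WOT limit of the normalized multiplication operator $\mathcal{M}_s$, and then shows by a Cauchy--Schwarz estimate relative to the Dirichlet form $\mathfrak{D}_s$ that $(2s-1)\bigl(\mathfrak{D}_s[\pi_s(g)\vp],\vp\bigr)\to0$ for \emph{Lipschitz} $\vp$; this requires restricting to Lipschitz test vectors and invoking invertibility of the limiting multiplier. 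You instead take a WOT limit $L$ of $(2s-1)\mathcal{I}_s$ directly, pass to the limit in the intertwiner identity $\mathcal{I}_{s_k}\pi_{s_k}(g)=\pi_{1-s_k}(g)\mathcal{I}_{s_k}$ (legitimate, since $\pi_s(g)\to\pi_{1/2}(g)$ in norm and the $(2s-1)\mathcal{I}_s$ are uniformly bounded), and identify $L$ as a positive nonzero scalar via Schur's lemma applied to the irreducible boundary representation $\pi_{1/2}$. This completely bypasses the Dirichlet form $\mathfrak{D}_s$ and the Lipschitz regularity hypothesis; the price is that you only control the matrix coefficient at the cyclic vector ${\bf1}$, but because $\pi_{1/2}$ is irreducible and ${\bf1}$ is cyclic, pointwise convergence of the normalized spherical function $\varphi_s$ is exactly what the Fell topology requires. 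The two proofs are thus equivalent in strength for the present statement, with yours being slightly more conceptual; the paper's version yields in passing a more refined description of the normalized coefficients $\sqrt{2s-1}\,Q_s(\pi_s(g)\vp,\vp)$ for all Lipschitz pairs, which is not needed here.
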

\begin{proof}
Since $\pi^*_s=\pi_{1-s}$ for all $0\le s\le 1$ it is enough to prove the continuity over the interval $[\half,1]$.
Moreover the continuity at $\mathcal{H}_{s_0}$ for $s_0>\half$ follows from the very definition of those representations together with the dominated convergence theorem.
One can therefore assume $s_0=\half$.

The argument consists to prove that any $\mathcal{H}_{\half}$-matrix coefficient is a limit when $s$ goes to $\half$ of matrix coefficients in $\mathcal{H}_{s}$.
First observe that for any $g\in G$ and $v\in L^2(\dd X,\mu_o)$ one has:
 \begin{align*}
\|\pi_s(g)v\|_2^2&=\int_{\dd X}e^{2s\gd b_\xi(g.o,o)}|v(g^{-1}\xi)|^2d\mu_o(\xi)\\
&=\int_{\dd X}e^{(1-2s)\gd b_\xi(g^{-1}.o,o)}|v(\xi)|^2d\mu_o(\xi)\\
&\le e^{(2s-1)\gd d(g.o,o)}\|v\|_2^2
\end{align*}
in other words $\|\pi_s(g)\|_{L^2\rightarrow L^2}\le e^{\gd d(g.o,o)}$.
Using Proposition \ref{prop:incl} proof the following relation holds over $\mathcal{B}(L^2(\dd X,\mu_o))$:
$$\mathcal{M}_s-\mathcal{I}_s=\mathfrak{D}_s$$
and Lemma \ref{lem:is} implies that the operators $\sqrt{(2s-1)}\mathcal{M}_s\in L^\infty(\dd X,\mu_o)\subset \mathcal{B}(L^2(\dd X,\mu_o))$ with $\half<s\le1$ satisfy:
$$\sqrt{(2s-1)}\mathcal{M}_s\asymp 1$$
uniformly over $s$.
Extracting a subsequence if necessary one can assume that $\sqrt{(2s-1)}\mathcal{M}_s$ converges for the weak operator topology to $\mathcal{M}^0_\half$ when $s>\half$ goes to $\half$ .
Note that the uniform lower bound given by Lemma \ref{lem:is} implies that $\mathcal{M}^0_\half$ is positive and invertible.

Given a Lipschitz function $\vp\in\text{Lips}(\dd X)$ one has:
\begin{align*}
(\mathfrak{D}_s\vp,\vp)&=\half\int_{\dd^2X}\frac{|\vp(\xi)-\vp(\eta)|^2}{d^{2(1-s)\gd}_{o}(\xi,\eta)}d\mu_o(\xi)d\mu_o(\eta)\\
&\prec_\vp\half\int_{\dd^2X}d^{2\gd[\gd^{-1}+s-1]}_{o}(\xi,\eta)\mu_o(\xi)d\mu_o(\eta)
\end{align*}
that is uniformly bounded for $s\ge\half-\frac{1}{2\gd}$.\\
The Cauchy-schwarz inequality relative to the positive operator $\mathfrak{D}_s$ over $L^2(\dd X,\mu_o)$ implies:
\begin{align*}
(2s-1)|(\mathfrak{D}_s[\pi_s(g)\vp],\vp)|^2&\le (2s-1)(\mathfrak{D}_s[\pi_s(g)\vp],\pi_s(g)\vp)(\mathfrak{D}_s[\vp],\vp)\\
&= (2s-1)[(\mathcal{M}_s[\pi_s(g)\vp],\pi_s(g)\vp)-(\mathcal{I}_s[\vp],\vp)](\mathfrak{D}_s[\vp],\vp)\\
&\prec \sqrt{(2s-1)}[\|\pi_s(g)\vp\|_2^2+\|\vp\|_2^2](\mathfrak{D}_s[\vp],\vp)\\
&+(2s-1)|(\mathfrak{D}_s[\vp],\vp)|^2 \rightarrow 0
\end{align*}
On the other hand the dominated convergence theorem implies:
$$(\pi_s(g)\vp,\sqrt{(2s-1)}\mathcal{M}_s[\vp])\rightarrow (\pi_\half(g)\vp,\mathcal{M}^0_\half[\vp])$$
Eventually one has:
\begin{align*}
&\sqrt{(2s-1)}(\mathcal{I}_s\pi_s(g)\vp,\vp)=\sqrt{(2s-1)}([\mathcal{M}_s-\mathfrak{D}_s]\pi_s(g)\vp,\vp)\\
&=(\pi_s(g)\vp,\sqrt{(2s-1)}\mathcal{M}_s[\vp])-\sqrt{(2s-1)}(\mathfrak{D}_s[\pi_s(g)\vp],\vp))\rightarrow (\pi_\half(g)\vp,\mathcal{M}^0_\half[\vp])
\end{align*}
Since $\text{Lips}(\dd X)$ is dense in $L^2(\dd X,\mu_o)$ and operator $\mathcal{M}_\half^0\in L^\infty(\dd X,\mu_o)$ is invertible the claim is proved.
\end{proof}


\nocite{*}
\bibliographystyle{plain}
\bibliography{complementarxiv}
\end{document}